\newcounter{ipotesi}
 \makeatletter \@addtoreset{equation}{section}
\newtheorem{thm}{Theorem}[section]
\newtheorem{hyp}[thm]{Hypotheses}{\rm}
{\rm}
\newtheorem{lemm}[thm]{Lemma}
\newtheorem{cor}[thm]{Corollary}
\newtheorem{prop}[thm]{Proposition}
\newtheorem{defi}[thm]{Definition}
\newtheorem{rmk}[thm]{Remark}{\rm}
\newcounter{parentenv}
\newcommand{\R}{{\mathbb R}}
\newcommand{\CC}{{\mathbb C}}
\newcommand{\E}{{\mathbb E}}
\newcommand{\N}{{\mathbb N}}
\newcommand{\X}{{\mathcal{X}}}
\newcommand{\K}{{\mathcal{K}}}
\newcommand{\D}{{\nabla}}
\newcommand{\J}{{\mathcal{D}}}
\newcommand{\g}{{\mathcal{O}}}
\newcommand{\eps}{\varepsilon}
\newcommand{\ra}{\rightarrow}
\newcommand{\Tr}{{\operatorname{Tr}}}
\newcommand{\tr}{{\operatorname{Tr}}}
\newcommand{\Dom}{{\operatorname{Dom}}}
\newcommand{\Span}{{\operatorname{span}}}
\newcommand{\Id}{{\operatorname{I}}}
\newcommand{\set}[1]{{\left\{#1\right\}}}
\newcommand{\pa}[1]{{\left(#1\right)}}
\newcommand{\gen}[1]{{\left\langle #1\right\rangle}}
\newcommand{\abs}[1]{{\left|#1\right|}}
\newcommand{\norm}[1]{{\left\|#1\right\|}}
\newcommand{\scal}[2]{{\left\langle #1,#2\right\rangle}}
\newcommand{\dscal}[3]{{_{#3}\left\langle #1,#2\right\rangle}_{{#3}^*}}
\newcommand{\mi}[1]{{\lbrace #1(t,x)\rbrace_{t\geq 0}}}
\newcommand{\mii}[1]{{\lbrace #1(t,x)\rbrace_{t\geq 0}}}
\newcommand{\eqsys}[1]{{\left\{\begin{array}{ll}#1\end{array}\right.}}
\newcommand{\tc}{\, \middle |\,}
\newcommand{\qc}{\mathbb{P}\mbox{-a.s.}}
\begin{document}

\frenchspacing

\title[$L^2$-theory for transition semigroups associated to dissipative systems]{$L^2$-theory for transition semigroups associated to dissipative systems}

\author[D. A. Bignamini]{{D. A. Bignamini}}

\address[D. A. Bignamini]{Dipartimento di Scienze Matematiche, Fisiche e Informatiche, Universit\`a degli Studi di Parma, Parco Area delle Scienze 53/A, 43124 Parma, Italy.}
\email{\textcolor[rgb]{0.00,0.00,0.84}{davideaugusto.bignamini@unimore.it}}

\subjclass[2020]{28C10, 28C20, 35J15, 46G12, 60G15, 60G40, 60H15}

\keywords{Iinvariant measure, generalized mild solution, Yosida approximating, Dirichlet, reaction-diffusion equations, dissipative systems, semilinear stochastic partial differential equations.}

\date{\today}

\date{\today}

\begin{abstract}
Let $\X$ be a real separable Hilbert space. Let $C$ be a linear, bounded, non-negative operator on $\X$ and let $A$ be the infinitesimal generator of a strongly continuous semigroup in $\X$. Let $\{W(t)\}_{t\geq 0}$ be a $\X$-valued cylindrical Wiener process on a filtered (normal) probability space $(\Omega,\mathcal{F},\{\mathcal{F}_t\}_{t\geq 0},\mathbb{P})$. Let $F:\Dom(F)\subseteq\X\ra\X$ be a smooth enough function.  We are interested in the generalized mild solution $\mi{X}$ of the semilinear stochastic partial differential equation
\begin{gather*}
\eqsys{
dX(t,x)=\big(AX(t,x)+F(X(t,x))\big)dt+ \sqrt{C}dW(t), & t>0;\\
X(0,x)=x\in \X.
}
\end{gather*}
We consider the transition semigroup defined by
\begin{align*}
P(t)\varphi(x):=\E[\varphi(X(t,x))], \qquad \varphi\in B_b(\X),\ t\geq 0,\ x\in \X.
\end{align*}
If $\g$ is an open set of $\X$, we consider the Dirichlet semigroup defined by
\begin{equation*}
P^{\mathcal{O}}(t)\varphi(x):=\mathbb{E}\left[\varphi(X(t,x))\mathbb{I}_{\{\omega\in\Omega\; :\;\tau_x(\omega)> t\}}\right],\quad \varphi\in B_b(\mathcal{O}),\; x\in\g,\; t>0
\end{equation*}
where $\tau_x$ is the stopping time defined by
\begin{equation*}
\tau_x=\inf\{ s> 0\; : \; X(s,x)\in \g^c \}.
\end{equation*} 
We study the infinitesimal generator of $P(t)$, $P^\g(t)$ in $L^2(\X,\nu)$, $L^2(\g,\nu)$ respectively, where $\nu$ is the unique invariant measure of $P(t)$.
\end{abstract}

\maketitle

\section{Introduction}
Let $\X$ be a real separable Hilbert space with inner product $\scal{\cdot}{\cdot}$ and norm $\norm{\cdot}$. Let \\$(\Omega,\mathcal{F},\{\mathcal{F}_t\}_{t\geq 0},\mathbb{P})$ be a filtered (normal) probability space and let $\{W(t)\}_{t\geq 0}$ be a $\X$-valued cylindrical Wiener process on $(\Omega,\mathcal{F},\{\mathcal{F}_t\}_{t\geq 0},\mathbb{P})$. Let $A:\Dom(A)\subseteq\X\ra\X$ be the infinitesimal generator of strongly continuous semigroup $e^{tA}$ and let $C\in\mathcal{L}(\X)$ (the space of bounded and linear operators from $\X$ to $\X$) be a non-negative operator (so $C$ is self-adjoint). Let $F:\Dom(F)\subseteq\X\ra\X$ (possibly non linear). We introduce the SPDE (Stochastic Partial Differential Equation)
\begin{gather}\label{eqFO}
\eqsys{
dX(t,x)=\big(AX(t,x)+F(X(t,x))\big)dt+\sqrt{C}dW(t), & t>0;\\
X(0,x)=x\in \X.
}
\end{gather}
This type of SPDE is widely studied in the literature, see for example \cite{AD-BA-MA1,BF20,BF2,CER1,DA4,DA1,DA3,DA-DE-GO1,DA-RO1,DA-ZA4,DA-ZA2,ES-ST2,GO-KO1,MAS1,PES-ZA1}. In this paper we focus on the case of dissipative systems, where $A$ and $F$ satisfy a joint dissipativity condition (see Hypotheses \ref{EU2}\eqref{EU2.4}).

If $\Dom(F)=\X$, for any $x\in\X$ it is possible to consider the solution of the mild form of \eqref{eqFO}, namely
\begin{align}\label{mildform}
X(t,x)=e^{tA}x+\int_0^te^{(t-s)A}F(X(s,x))ds+W_A(t),\quad \qc
\end{align}
where $\{W_A(t)\}_{t>0}$ is the process defined by
\begin{align*}
W_A(t):=\int^t_0e^{(t-s)A}\sqrt{C}dW(s).
\end{align*}
However, if $\Dom(F)\subset\X$ is a proper subset of $\X$, \eqref{mildform} may not make sense for every $x\in\X$, since it is not guaranteed that there exists a process $\mi{X}$ such that verifies the \eqref{mildform} and its trajectories live in $\Dom(F)$. So we need a more general notion of solution. Around the nineties S. Cerrai G. Da Prato and J. Zabczyk have considered the notion of generalized mild solution to avoid the problem of $\Dom(F)$. The idea to construct a generalized mild solution is to assume that there exists a Banach space $E\subseteq\Dom(F)$ densely and continuously embedded in $\X$ such that the operator $A$ and the function $F$ have some ``good'' properties $E$. We prove that for any $x\in E$, the SPDE \eqref{eqFO} has a unique mild solution $\mi{X}$ (i.e. check \eqref{mildform}) such that its trajectories take values in $E$. After, exploiting the density of $E$, we will prove that for any $x\in \X$ there exists a process $\mi{X}$, such that
\begin{equation}\label{mildgen}
\lim_{n\ra\infty}\norm{X(\cdot,x_n)-X(\cdot,x)}_{C([0,T],\X)}=0,\quad\forall T>0,\;\qc
\end{equation}
where $\{x_n\}_{n\in\N} \subseteq E$ is a sequence converging to $x$ and $X(t,x_n)$ is the unique mild solution of \eqref{eqFO}, with initial datum $x_n$. We call the limit $\mi{X}$ of \eqref{mildgen} generalized mild solution of \eqref{eqFO}. We refer to \cite[Chapter 7]{CER1} and \cite[Chapter 4]{DA1} for two examples where the generalized mild solution is constructed on an explicit space $E$. 
In Section \ref{GMS}, under suitable hypotheses (Hypotheses \ref{EU2}) we show that, for any $x\in\X$, the SPDE \eqref{eqFO} has a unique generalized mild solution $\mi{X}$.

Let $B_b(\X)$ be the space of bounded and Borel measurable functions from $\X$ to $\R$. We consider the semigroup
\begin{equation}\label{semiX}
P(t)\varphi(x):=\E[\varphi(X(t,x))],\quad \varphi\in B_b(\X),\; x\in\X,\; t>0
\end{equation}

In subsection \ref{misinvS}, with an additional hypothesis (Hypotheses \ref{EU3}), we will prove that the semigroup $P(t)$ has a unique invariant measure $\nu$, such that $\nu(E)=1$, and $\nu$ has finite moments of every order. By the invariance of $\nu$ and standard arguments, $P(t)$ is uniquely extendable to a strongly continuous semigroup $P_p(t)$ in $L^p(\X,\nu)$, for $p\geq 1$. We denote by $N_2$ the infinitesimal generator of $P_2(t)$. A fundamental tool to study the behavior of the transition semigroup in $L^2(\X,\nu)$ will be to define a core of regular functions over which is known the action of $N_2$. The candidate to be the core will be the space 
\[
\xi_A(\X):=\Span\{\mbox{real and imaginary parts of the functions } x\mapsto e^{i\scal{x}{h}}\,|\, h\in \Dom(A^*)\}.
\]
We will prove that, on $\xi_A(\X)$, $N_2$ acts as the following second order Kolmogorov operator defined by
\begin{equation}\label{OPFO}
 N_{0}\varphi(x):=L_0\varphi(x)+\scal{F_0(x)}{\D\varphi(x)},\quad \varphi\in \xi_A(\X),\;x\in\X,
\end{equation}
where
\begin{equation}\label{OPFOC}
L_0\varphi(x):=\frac{1}{2}\tr[C\D^2\varphi(x)]+\scal{x}{A^*\D\varphi(x)}
\end{equation}
Precisely we will prove an even more significant result. 

\begin{thm}\label{identif}
Assume that Hypotheses \ref{EU3var} hold true. $N_2$ is the closure in $L^2(\X,\nu)$ of the operator $N_0$, defined in \eqref{OPFO}. In particular $\xi_A(\X)$ is a core for $N_2$. 
\end{thm}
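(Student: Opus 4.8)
The plan is to prove the theorem through the standard core criterion for generators of $C_0$-semigroups, in two movements: first identify $N_0$ as a restriction of $N_2$, then show the range condition that upgrades this restriction to a core. As a preliminary, note that $\xi_A(\X)$ is dense in $L^2(\X,\nu)$: the functions $e^{i\scal{x}{h}}$ with $h\in\Dom(A^*)$ separate the points of $\X$ (since $\Dom(A^*)$ is dense, $A$ being the generator of a $C_0$-semigroup), and a monotone-class/Fourier argument together with the fact that $\nu$ has finite moments of every order gives density. The whole difficulty is then to promote this density into the statement that $\xi_A(\X)$ is a core for $N_2$, i.e. that $\overline{N_0}=N_2$.

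First I would establish that $\xi_A(\X)\subseteq\Dom(N_2)$ and that $N_2\varphi=N_0\varphi$ there. For $\varphi(x)=\cos\scal{x}{h}$ or $\sin\scal{x}{h}$ with $h\in\Dom(A^*)$ the route is It\^o's formula, but since the generalized mild solution need not take values in $\Dom(A)\cap\Dom(F)$ I would first apply It\^o to the regularized equations, with the Yosida approximation $A_n=A(I-n^{-1}A)^{-1}$ and a bounded Lipschitz approximation $F_n$ of $F$, whose solutions $X_n(\cdot,x)$ are genuine strong solutions. This yields
\[
\varphi(X_n(t,x))-\varphi(x)=\int_0^t N_0^{(n)}\varphi(X_n(s,x))\,ds+\text{(martingale)} ,
\]
where $N_0^{(n)}$ is the Kolmogorov operator built from $A_n,F_n$. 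Passing to the limit via the convergence \eqref{mildgen} of $X_n$ to $X$ and the uniform moment bounds from Hypotheses \ref{EU3var}, and then taking expectations, gives the integrated identity $P(t)\varphi-\varphi=\int_0^t P(s)N_0\varphi\,ds$ in $L^2(\X,\nu)$. One must check $N_0\varphi\in L^2(\X,\nu)$: the trace term $\tfrac12\tr[C\D^2\varphi(x)]$ is bounded, the term $\scal{x}{A^*\D\varphi(x)}$ grows linearly and lies in $L^2(\X,\nu)$ by the second moment of $\nu$, and $\scal{F_0(x)}{\D\varphi(x)}$ is controlled by $\norm{F_0}\,\norm{h}$, integrable under the growth control on $F_0$ built into Hypotheses \ref{EU3var}. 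Dividing by $t$ and letting $t\downarrow 0$ identifies $N_2\varphi=N_0\varphi$; in particular $N_0$ is closable and $\overline{N_0}\subseteq N_2$.

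To turn this into equality I would invoke the following mechanism. For $\re\lambda$ larger than the growth bound of $P_2(t)$, $\lambda-N_2$ is a bijection of $\Dom(N_2)$ onto $L^2(\X,\nu)$ with bounded inverse, so on $\Dom(\overline{N_0})$ one has $\norm{(\lambda-\overline{N_0})u}=\norm{(\lambda-N_2)u}\geq c\,\norm{u}$; hence $\lambda-\overline{N_0}$ is bounded below and has closed range. Consequently, if I can show that $(\lambda-N_0)(\xi_A(\X))$ is dense in $L^2(\X,\nu)$ for one such $\lambda$, then $\lambda-\overline{N_0}$ is onto, and being a bounded-below bijection contained in the bijection $\lambda-N_2$ it must coincide with it. This forces $\overline{N_0}=N_2$ and proves that $\xi_A(\X)$ is a core.

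The main obstacle is exactly this density of $(\lambda-N_0)(\xi_A(\X))$. I would attack it by the same approximation scheme anchored on the Ornstein--Uhlenbeck part $L_0$, for which the analogous core statement is classical. The idea is to verify, by the criterion above applied to each $N_2^{(n)}$, that $\xi_A(\X)$ is a core for the approximate generators, and then to show that the resolvents $R(\lambda,N_2^{(n)})$ converge strongly to $R(\lambda,N_2)$; a Trotter--Kato argument fed by the convergence of the generalized mild solutions then transfers density back to $N_0$ by a diagonal argument. The genuinely hard points, where Hypotheses \ref{EU3var} are indispensable, are the uniform resolvent bounds needed for Trotter--Kato, the identification of the limiting semigroup with $P_2(t)$ (which also entails controlling the convergence of the approximate invariant measures to $\nu$), and the handling of the unbounded and possibly degenerate drift term $\scal{F_0(\cdot)}{\D\,\cdot\,}$ as a $\nu$-admissible perturbation of $L_0$ in $L^2(\X,\nu)$.
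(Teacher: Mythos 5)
Your overall architecture matches the paper's: first show that $\overline{N_0}\subseteq N_2$ via the integrated identity $P(t)\varphi-\varphi=\int_0^tP(s)N_0\varphi\,ds$ on $\xi_A(\X)$ (the paper does this in Proposition \ref{inc0}, upgrading the pointwise derivative at $t=0$ to a limit in $L^2(\X,\nu)$ by the Vitali convergence theorem, using $\nu(E)=1$ so that $F_0=F$ $\nu$-a.e.\ and $N_0\varphi\in L^2(\X,\nu)$), and then conclude $\overline{N_0}=N_2$ from a range-density condition for $\lambda-\overline{N_0}$, exploiting dissipativity. Up to that point your plan is sound and essentially the paper's.

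The genuine gap is in your treatment of the range condition. Strong resolvent convergence $R(\lambda,N_2^{(n)})\to R(\lambda,N_2)$ together with the core property of $\xi_A(\X)$ for each approximate generator does \emph{not} transfer the core property to the limit: if $\varphi_n$ solves $(\lambda-N_0^{(n)})\varphi_n\approx f$, then $(\lambda-N_0)\varphi_n=(\lambda-N_0^{(n)})\varphi_n+\langle F_n-F,\D\varphi_n\rangle$ (plus the $A_n$ versus $A$ discrepancy), and to make the error term small in $L^2(\X,\nu)$ you need a bound on $\D\varphi_n$ that is \emph{uniform} in $n$; a Trotter--Kato argument supplies no such bound, and the ``uniform resolvent bounds'' you mention are just the trivial $1/\lambda$ from contractivity, which does not help. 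The paper's proof hinges precisely on the missing estimate: for $f\in C^1_b(\X)$ the resolvent $\varphi_{\delta,s}=\int_0^{+\infty}e^{-\lambda t}P_{\delta,s}(t)f\,dt$ of the equation with the Yosida--Mehler regularized drift $F_{\delta,s}$ satisfies $\norm{\D\varphi_{\delta,s}}_{C_b(\X)}\leq(\lambda-\zeta)^{-1}\norm{\D f}_{C_b(\X)}$ uniformly in $\delta,s$ (estimate \eqref{uuu}), because the regularization preserves the dissipativity constant; combined with $F_{\delta,s}\to F$ in $L^2(\X,\nu)$ this forces $(\lambda-N_2)\varphi_{\delta,s}\to f$, and density of $C^1_b(\X)$ finishes the argument. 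Note also that the paper deliberately sidesteps the ``convergence of the approximate invariant measures'' issue you flag as indispensable: each $\varphi_{\delta,s}$ is placed directly into $\Dom(\overline{N_0})$ by Lemma \ref{inc}, which rests on the mixed-topology generator $L_{b,2}$ of Theorem \ref{KOKO} and the quantitative $\xi_A(\X)$-approximation of Proposition \ref{appxiA}, so that every limit is taken against the single fixed measure $\nu$. Without the uniform gradient estimate and without an analogue of Lemma \ref{inc}, your sketch does not close; with them, your ``bounded below plus dense range'' mechanism is exactly how the paper concludes.
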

To prove Theorem \ref{identif}, we should first extend the operator $N_0$ into $L^2(\X,\nu)$, to do so is fundamental that $\nu(E)=1$, since $F$ is well defined only on $E$.
Theorem \ref{identif} extends those contained in \cite[Section 3]{BF2}, \cite{DA3}, \cite[Sections 3.5 and 4.6]{DA1} and \cite[Section 11.2.2]{DA-ZA1}. For a study of an analogous problem in $L^2(E,\nu)$ in the case of a multiplicative noise we refer to \cite{CE-DA1}.

Let $\g$ be an open set of $\X$ and let  $B_b(\g)$ be the space of bounded and Borel measurable functions from  $\g$ to $\R$. In Section \ref{gradsist} we consider the Dirichlet semigroup
\begin{equation}\label{semi2}
P^{\mathcal{O}}(t)\varphi(x):=\mathbb{E}\left[\varphi(X(t,x))\mathbb{I}_{\{\omega\in\Omega\; :\;\tau_x(\omega)> t\}}\right],\quad \varphi\in B_b(\mathcal{O}),\; x\in\g,\; t>0
\end{equation} 
where $\mi{X}$ is the generalized mild solution of \eqref{eqFO}, and $\tau_x$ is the stopping time defined by
\begin{equation*}
\tau_x=\inf\{ s> 0\; : \; X(s,x)\in \g^c \}.
\end{equation*}  
We will prove that $\nu$ is sub-invariant for $P^{\mathcal{O}}(t)$; therefore $P^{\mathcal{O}}(t)$ is uniquely extendable to a strongly continuous semigroup $P_p^{\mathcal{O}}(t)$ in $L^p(\g,\nu)$, for $p\geq 1$. We denote by $M_2$ the infinitesimal generator of $P_2^\g(t)$. In Section \ref{gradsist} we will restrict to the case where $F$ is a gradient perturbation, namely it has a potential. In this case the invariant measure $\nu$ is a weighted Gaussian measure and it is possible to associate a quadratic form $\mathcal{Q}_2$ to $N_2$, see for example \cite{AD-CA-FE1, AN-FE-PA1, CAP-FER1, CAP-FER2,DA2,DA-LU2,DA-LU3,DA-LU-TU1,DA-TU1,DA-TU2,FER1}. Under some additional hypotheses (Hypotheses \ref{DIRI}) we will define the Sobolev space $W_{C}^{1,2}(\X,\nu)$, and we will show that there exists a quadratic form $\mathcal{Q}_2$ on $W_{C}^{1,2}(\X,\nu)$ such that
\[
\int_\X (N_2\varphi)\psi d\nu=\mathcal{Q}_2(\varphi,\psi)=-\frac{1}{2}\int\scal{C^{1/2}\D\varphi}{C^{1/2}\D\psi}d\nu,\quad \forall \varphi\in\Dom(N_2),\;\psi\in W_C^{1,2}(\X,\nu).
\]
After, proceeding as in \cite[Section 3]{DA-LU1}, we will consider a suitable Sobolev space $\mathring{W}_C^{1,2}(\X,\nu)$ of the functions $u:\g\ra\R$ such that their null extension $\widehat{u}$  belongs to $W_{C}^{1,2}(\X,\nu)$, and the quadratic form $\mathcal{Q}_2^\g$ on $\mathring{W}_{C}^{1,2}(\g,\nu)$ defined by
\[
\mathcal{Q}^\g_2(\varphi,\psi)=\mathcal{Q}_2(\widehat{\varphi},\widehat{\psi}),\quad \forall \varphi,\psi\in \mathring{W}_{C}^{1,2}(\g,\nu).
\]
In Subsection \ref{ingO} we are going to prove the last result of this paper.

\begin{thm}\label{identifDiri}
Assume that Hypotheses \ref{DIRI} hold true. Then the infinitesimal generator $M_2$ of $P^\g_2(t)$ is the operator $N^\g_2$ associated with $\mathcal{Q}^\g_2$, namely
\begin{align*}
\Dom(N^\g_2):=\{\varphi\in\mathring{W}_{C}^{1,2}(\mathcal{O},\nu)\; &:\; \exists \beta\in L^2(\g,\nu) \mbox{ s.t. } \int_{\g}\beta\psi d\nu=\mathcal{Q}_2^{\mathcal{O}}(\beta,\psi)\;\; \forall\psi\in \mathring{W}_{C}^{1,2}(\X,\nu) \}\\
&N^\g_2\varphi=\beta,\quad \varphi\in\Dom(N^\g_2).
\end{align*}

\end{thm}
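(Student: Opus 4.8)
The plan is to realise both $M_2$ and $N^\g_2$ as strong resolvent limits of a single family of penalised (Feynman--Kac) operators, and then to use the uniqueness of such limits. Under Hypotheses \ref{DIRI} we are in the gradient case, so $\nu$ is reversible for $P(t)$; hence $P_2(t)$ is symmetric on $L^2(\X,\nu)$ and $N_2$ is the self-adjoint, non-positive operator associated with the symmetric closed form $-\mathcal{Q}_2$ on $W_C^{1,2}(\X,\nu)$. Likewise $N^\g_2$ is, by construction, the self-adjoint operator associated with the symmetric closed form $-\mathcal{Q}^\g_2$ on $\mathring{W}_C^{1,2}(\g,\nu)$. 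Since a closed operator is determined by its resolvent, and strong resolvent limits are unique, it suffices to approximate $(\lambda-M_2)^{-1}$ and $(\lambda-N^\g_2)^{-1}$ by the resolvents of one common family and check they coincide.

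For $n\in\N$ put $V_n:=n\,\mathbb{I}_{\g^c}$, which is \emph{bounded}, and introduce
\begin{equation*}
P_n(t)\varphi(x):=\E\!\left[\varphi(X(t,x))\exp\!\Big(-\!\int_0^t V_n(X(s,x))\,ds\Big)\right],\qquad \varphi\in B_b(\X),\ x\in\X,\ t>0.
\end{equation*}
By the Feynman--Kac formula and reversibility, each $P_n(t)$ is a symmetric sub-Markovian strongly continuous semigroup on $L^2(\X,\nu)$; as $V_n$ is bounded, its generator is exactly $N_2-V_n$, associated with the symmetric closed form
\begin{equation*}
b_n(u,v):=-\mathcal{Q}_2(u,v)+\int_\X V_n\,uv\,d\nu,\qquad u,v\in W_C^{1,2}(\X,\nu).
\end{equation*}
These forms are non-negative and non-decreasing in $n$, so by the monotone convergence theorem for non-decreasing quadratic forms they increase to the closed form $b_\infty$ with domain $\{u\in W_C^{1,2}(\X,\nu):\sup_n b_n(u,u)<\infty\}$, and the corresponding generators converge to that of $b_\infty$ in the strong resolvent sense. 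The decisive point is the identification of $b_\infty$: finiteness of $\sup_n n\int_{\g^c}u^2\,d\nu$ forces $u=0$ $\nu$-a.e. on $\g^c$, i.e. $u$ is the null extension of an element of $\mathring{W}_C^{1,2}(\g,\nu)$, and then $b_\infty(u,u)=-\mathcal{Q}_2(u,u)=-\mathcal{Q}^\g_2(u,u)$. Thus, on the closed subspace $L^2(\g,\nu)\subseteq L^2(\X,\nu)$, $b_\infty$ is precisely the form defining $N^\g_2$, so the generators of $P_n(t)$ converge in the strong resolvent sense to $N^\g_2$ (with the resolvent equal to $0$ on $L^2(\g^c,\nu)$, reflecting instantaneous killing there).

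It remains to show that the same penalised semigroups converge to the killed semigroup. Pointwise this is dominated convergence: for $x\in\g$ one has $\exp(-n\int_0^t\mathbb{I}_{\g^c}(X(s,x))\,ds)\to\mathbb{I}_{\{\int_0^t\mathbb{I}_{\g^c}(X(s,x))\,ds=0\}}$, while for $x\in\g^c$ the exponential tends to $0$ and $P^\g(t)\varphi(x)=0$ because $\tau_x=0$. Using continuity of $s\mapsto X(s,x)$ and openness of $\g$, the event that the occupation time of $\g^c$ up to $t$ vanishes agrees $\qc$ with $\{\tau_x>t\}$, whence $P_n(t)\varphi(x)\to\E[\varphi(X(t,x))\mathbb{I}_{\{\tau_x>t\}}]=P^\g(t)\varphi(x)$; the bound $|P_n(t)\varphi|\le P(t)|\varphi|\in L^2(\X,\nu)$ and invariance of $\nu$ upgrade this to convergence in $L^2$. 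Taking Laplace transforms in $t$ then gives $(\lambda-(N_2-V_n))^{-1}\varphi=\int_0^\infty e^{-\lambda t}P_n(t)\varphi\,dt\to\int_0^\infty e^{-\lambda t}P^\g_2(t)\varphi\,dt=(\lambda-M_2)^{-1}\varphi$ for every $\varphi$ and $\lambda>0$. Comparing with the previous paragraph, the two strong resolvent limits must coincide, so $(\lambda-M_2)^{-1}=(\lambda-N^\g_2)^{-1}$ and therefore $M_2=N^\g_2$, as claimed. This is the infinite-dimensional counterpart of the identification of the part of a Dirichlet form on $\g$ with the absorbed process, carried out as in \cite[Section 3]{DA-LU1}.

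I expect the genuinely delicate step to be the probabilistic identification in the third paragraph: one must prove that the occupation time of $\partial\g$ before time $t$ is $\qc$ zero, so that the Feynman--Kac killing by $V_n$ reproduces \emph{exactly} the exit-time killing at $\tau_x$, and that the associated null sets behave well under the $L^2(\X,\nu)$-integration in $x$. This is where the openness of $\g$, the continuity and Feller-type properties of the generalized mild solution $\{X(t,x)\}_{t\ge 0}$, and boundary regularity of $\g$ must be used. A secondary technical point is to verify the hypotheses of the monotone form-convergence theorem in the present $L^2(\X,\nu)$ setting and to confirm that $\mathring{W}_C^{1,2}(\g,\nu)$ is dense in $L^2(\g,\nu)$, so that the limiting subspace is exactly $L^2(\g,\nu)$ and the operator attached to $b_\infty$ is genuinely $N^\g_2$.
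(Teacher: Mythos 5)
Your overall strategy --- kill the process by a Feynman--Kac potential concentrated on/near $\g^c$, identify the penalised generators as perturbations of $N_2$, and match the two resolvent limits --- is the same as the paper's (its Subsections on the approximating semigroups and the proof of Theorem \ref{identifDiri}). The implementation differs in two places. First, you take the sharp potential $V_n=n\,\mathbb{I}_{\g^c}$ and invoke the monotone convergence theorem for increasing closed forms, whereas the paper uses the continuous, distance-based potential $\frac{1}{\epsilon}V_\epsilon$ of \eqref{Vep} and replaces the abstract form-convergence theorem by the explicit a priori estimates \eqref{e1}--\eqref{e3}, weak compactness in $W^{1,2}_{C}(\X,\nu)$, and Lax--Milgram; your form-theoretic half is sound modulo the density of $\mathring{W}^{1,2}_{C}(\g,\nu)$ in $L^2(\g,\nu)$, which you flag. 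Second, your identification of the generator of $P_n(t)$ as $N_2-V_n$ cannot be done the way the paper does it in Proposition \ref{giap}: that computation differentiates the Feynman--Kac factor pointwise at $t=0$ and uses the continuity of $s\mapsto V_\epsilon(X(s,x))$ along paths, which fails for $\mathbb{I}_{\g^c}$. You would instead have to run the Duhamel/perturbation-series argument for a bounded \emph{measurable} potential; this is standard but is an additional piece you do not supply.

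The genuine gap is the one you yourself flag, and it is not merely delicate --- it is where your choice of potential makes the problem strictly harder than the paper's. For $P_n(t)\varphi(x)\to P^\g(t)\varphi(x)$ you need that, a.s.\ on $\{\tau_x\le t\}$, the occupation time $\int_0^t\mathbb{I}_{\g^c}(X(s,x))\,ds$ is strictly positive, i.e.\ that immediately after touching $\partial\g$ the path spends positive Lebesgue time in $\g^c$. Nothing in Hypotheses \ref{DIRI} gives this: a continuous path can graze $\partial\g$ at a single instant and return to $\g$, in which case $\exp(-n\int_0^t V_n(X(s,x))ds)=1$ for all $n$ while $\mathbb{I}_{\{\tau_x>t\}}=0$, and your limit identifies the wrong semigroup. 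The paper's mollified $V_\epsilon$ is designed to weaken exactly this requirement: $\frac{1}{\epsilon}V_\epsilon$ is already of order $\frac{1}{\epsilon}$ on an $\epsilon$-neighbourhood of $\g^c$ \emph{inside} $\g$, so it suffices that the path linger \emph{near} $\g^c$ for a time large compared with $\epsilon$, rather than sit inside $\g^c$ for positive time. (Even for the paper this step, \eqref{cip11}, is argued tersely --- the $\delta(w)$ there should in principle depend on $\epsilon$ --- but the required probabilistic input is visibly weaker than yours.) Until you prove the positive-occupation-time statement for the generalized mild solution at $\partial\g$, the third paragraph of your argument does not close, and with it the identification $R(\lambda,M_2)=R(\lambda,N_2^\g)$.
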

This result generalizes the one contained in \cite[Section 3]{DA-LU1} proved for $F=0$. For a study of an analogous problem in the case where $\X$ is a separable Banach space and $F=0$ we refer to \cite{AS-VA1}, instead we refer to \cite{PRI1,TAL1} for other types of problems about the semigroup \eqref{semi2}.

To prove the above mentioned results, we rely on the fact that $\nu$ has finite moments of any order and $\nu(E)=1$. In \cite{DA-DE-GO1,DA-RO1,DA-RO-WA1} the authors assume as hypothesis the existence of a measure with the properties just mentioned, in this paper we show the existence and uniqueness of such a measure for the class of dissipative systems that we are considering. In \cite[Sections 7.2 and 11.6]{DA-ZA4} the authors prove existence and uniqueness of the generalized mild solution of \eqref{eqFO} and of the invariant measure for $P(t)$ in many settings that include our own. However they do not provide the estimates of the moments that we require. Instead in \cite[Chapters 4]{DA1} and \cite[Chapters 6-7]{CER1} the authors prove the estimates that we need, but in a specific context (in the same context, see \cite{CER4,CER5} for the case of multiplicative noise and \cite{CER-LU1} for the nonautonomous case). In particular they assume that $F$ is a Nemytskii operator; in Subsection \ref{noN} we will present an example of $F$ that it is not of this type. Then in Section \ref{GMS} we will prove that the SPDE \eqref{eqFO} has a unique generalized mild solution $\mi{X}$ and the transition semigroup $P(t)$ defined in \eqref{semiX} has a unique invariant measure $\nu$ with finite moments of any order, and $\nu(E)=1$.

We conclude this introduction comparing our assumptions and examples with the one already present in the literature.
Hypotheses \ref{EU3} include the dissipative case of the SPDE considered in \cite[Section 6]{CER1} (see \cite[Hypothesis 3]{CE-DA1}). In \cite[Section 6]{CER1} the authors set $\X=L^2(\Gamma,\lambda,\R^n)$ and $E=C(\overline{\Gamma},\R^n)$ where $n\in\N$, $\lambda$ is the Lebesgue measure and $\Gamma$ is an open set in $\R^d$ with $d\leq 3$. They consider as $F$ a Nemytskii type operator
\[
F(x)(\xi):=f(\xi,x(\xi))+cx(\xi),\quad x\in L^2(\Gamma,\lambda,\R^n),\; \xi\in\Gamma,\; c>0, 
\]
where $f:\Gamma\times \R^n\ra \R$ is a suitable function. This setting covers a large class of reaction-diffusion systems such as the one considered in \cite[Chapter 4]{DA1}. In Subsection \ref{gra} we are going to present a particular case of \cite[Section 6]{CER1} that verifies also the hypotheses of Theorem \ref{identifDiri}. In Subsection \ref{noN} we give an example of $F$ that satisfies the hypotheses of Theorem \ref{identif}, but it does not belong to the class considered in \cite[Chapter 6]{CER1} and \cite[Chapter 4]{DA1}. Finally in Subsection \ref{belapp} we will consider a particular case of the example of Subsection \ref{noN}, where the invariant measure $\nu$ is concentrated on $W^{1,2}([0,1],\lambda)$.

\section{Preliminaries}
In this Section we recall some notations, definitions and results that we will use in the rest of the paper.
\subsection{Notations}
Let $H_1$ and $H_2$ be two Banach spaces. We denote by $\mathcal{B}(H_1)$ the family of the Borel subsets of $H_1$ and by $B_b(H_1;H_2)$ the set of the $H_2$-valued, bounded and Borel measurable functions. When $H_2=\R$ we simply write $B_b(H_1)$. We denote by $C_b(H_1;H_2)$ the set of the continuous and bounded functions from $H_1$ to $H_2$. If $H_2=\R$ we simply write $C_b(H_1)$. We denote by $C^k_b(H_1;H_2)$, $k\in\N\cup\set{\infty}$ the set of the $k$-times Fr\'echet differentiable functions from $H_1$ to $H_2$ with bounded derivatives up to order $k$. If $H_2=\R$ we simply write $C_b^k(H_1)$. For a function $\Phi\in C_b^1(H_1;H_2)$ we denote by $\J \Phi(x)$ the Fr\'echet derivative operator of $\Phi$ at the point $x\in H_1$. Let $H_3$ be a Hilbert space, if $f\in C_b^1(H_3)$ then, for every $x\in H_3$ there exists a unique $k\in H_3$ such that for every $h\in H_3$
\[\J f(x)(h)=\gen{h,k}_{H_3}.\]
We set $\D f(x):=k$.
\begin{defi}
Let $G: \Dom(G)\subseteq H_1\ra H_1$  and let $H_2\subseteq H_1$. We call part of $G$ in $H_2$ the function $G_{H_2}:\Dom(G_{H_2})\subseteq H_2\ra H_2$ defined by
\[
\Dom(G_{H_2}):=\{x\in \Dom(G)\cap H_2\; :\; G(x)\in H_2  \},\quad G_{H_2}(x):=G(x),\; x\in \Dom(G_{H_2}).
\]
\end{defi}
We denote by $\mathcal{L}(H_1)$ the set of bounded linear operators from $H_1$ to itself and by $\Id_{H_1}\in\mathcal{L}(H_1)$ the identity operator on $H_1$. $\Gamma\in\mathcal{B}(H_1)$, we denote by $\mathbb{I}_\Gamma$ the characteristic function of $\Gamma$. We say that $B\in\mathcal{L}(H_1)$ is \emph{non-negative} (\emph{positive}) if for every $x\in H_1\setminus\set{0}$
\[
\gen{Bx,x}\geq 0\ (>0).
\]
In an anologous way we define the non-positive (negative) operators. We recall that a bounded and non negative operator is self-adjoint. Let $B\in\mathcal{L}(H_1)$ be a non-negative and self-adjoint operator. We say that $B$ is a trace class operator if
\begin{align*}
\Tr[B]:=\sum_{n=1}^{+\infty}\scal{Be_n}{e_n}<+\infty,
\end{align*}
for some (and hence for all) orthonormal basis $\{e_n\}_{n\in\N}$ of $H_1$. We recall that the trace is independent of the choice of the basis (see \cite[Section XI.6 and XI.9]{DUN-SCH2}).

Let $(\Omega,\mathcal{F},\{\mathcal{F}_t\}_{t\geq 0},\mathbb{P})$ be a filtered (normal) probability space and let $\K$ be a separable Banach space. Let $\xi:(\Omega,\mathcal{F},\mathbb{P})\ra (\K,\mathcal{B}(\K))$ be a random variable, we denote by 
\[
\mathscr{L}(\xi):=\mathbb{P}\circ\xi^{-1}
\] 
the law of $\xi$ on $(\K,\mathcal{B}(K))$, and by
\[
\mathbb{E}[\xi]:=\int_\Omega \xi(w)\  \mathbb{P}(d\omega)=\int_\K x\  \mathscr{L}(\xi)(dx)
\]
the expectation of $\xi$ with respect to $\mathbb{P}$. In this paper when we refer to a $\K$-valued process we mean an adapted process defined on $(\Omega,\mathcal{F},\{\mathcal{F}_t\}_{t\geq 0},\mathbb{P})$ with values in $\K$. Let $\{Y_1(t)\}_{t\geq 0}$ e $\{Y_2(t)\}_{t\geq 0}$ be two stochastic processes; we say that $\{Y_1(t)\}_{t\geq 0}$ is a version (or a modification) of $\{Y_2(t)\}_{t\geq 0}$ if, for any $t\geq 0$ we have
\[
Y_1(t)=Y_2(t),\quad \qc
\]
Let $\{Y(t)\}_{t\geq 0}$ be a $\K$-valued process we say that $\{Y(t)\}_{t\geq 0}$ is continuous if the map $Y(\cdot):[0,+\infty)\ra \K$ is continuous $\qc$.

We recall the definitions of two Banach spaces often considered in the literature (see \cite[Section 6.2]{CER1}).

\begin{defi}\label{spacep}$ $
\begin{enumerate}
\item Let $I$ be an interval contained in $\R$ and $p\geq 1$. We denote by $\K^p(I)$ the space of progressive measurable $\K$-valued processes $\{Y(t)\}_{t\in I}$ endowed with the norm
\[
\norm{\{Y(t)\}_{t\in I}}_{\K^p(I)}^p:=\sup_{t\in \overline{I}}\E[\norm{Y(t)}_\K^p].
\]

\item Let $I$ be an interval contained in $\R$ and $p\geq 1$. We denote by $C_p(I,\K)$ the space of continuous $\K$-valued processes $\{Y(t)\}_{t\in I}$ endowed with the norm
\[
\norm{\{Y(t)\}_{t\in I}}_{C_p(I,\K)}^p:=\E[\sup_{t\in \overline{I}}\norm{Y(t)}_\K^p].
\]
\end{enumerate}
\end{defi}

\subsection{Dissipative mappings}

We recall some basic results about subdifferential and dissipative maps, we refer to \cite[Appendix A]{CER1} and \cite[Appendix D]{DA-ZA4} for the results of this section.
Let $\mathcal{K}$ be a separable Banach space. For any $x\in\mathcal{K}$, we define the subdifferential $\partial \norm{x}_\K$ of $\norm{\cdot}_\K$ at $x\in\K$ as
\[
\partial \norm{x}:=\{ x^*\in E\; |\; \dscal{x}{x^*}{\mathcal{K}}=\norm{x}_{\mathcal{K}},\; \norm{x^*}_{\mathcal{K}^*}=1\}.
\]
Let $[t_0,t_1]\subset[0,+\infty)$ and let $u:[t_0,t_1]\ra\K$ be a differentiable function. Then the function $\gamma:=\norm{u}_\K:[t_0,t_1]\ra [0,+\infty)$ is left-differentiable in any $t_0\in [t_0,t_1]$ and
\begin{equation}\label{Ldiff}
\dfrac{d^-\gamma}{dt}(t_0):=\lim_{h\ra0^-}\dfrac{\gamma(t_0+h)-\gamma(t_0)}{h}=\min\{\dscal{u'(t_0)}{x^*}{E}\;:\; x^*\in\partial\norm{u(t_0)}_\K \}.
\end{equation}
Moreover, let $b\in\R$ and $g:[t_0,t_1]\ra [0,+\infty)$ be a continuous function, if
\[
\dfrac{d^-\gamma}{dt}(t)\leq b\gamma(t)+g(t), \quad t\in [t_0,t_1],
\]
then, for any $t\in [t_0,t_1]$, we have
\begin{equation}\label{varofcost}
\gamma(t)\leq e^{b(t-t_0)}\gamma(t_0)+\int^t_{t_0}e^{b(t-s)}g(s)ds,\quad t\in [t_0,t_1].
\end{equation} 

\begin{defi}\label{dissi}
A map $f:\Dom(f)\subseteq \mathcal{K}\ra\mathcal{K}$ is said to be dissipative if, for any $\alpha>0$ and $x,y\in \Dom(f)$, we have
\begin{equation}\label{disban1}
\norm{x-y-\alpha(f(x)-f(y))}_{\mathcal{K}}\geq \norm{x-y}_{\mathcal{K}} 
\end{equation}
If $f$ is a linear operator \eqref{disban1} reads as
\[
\norm{(\lambda\Id-A)x}_\K\geq \lambda \norm{x}_\K, \quad \forall \lambda>0,\; x\in\Dom(A)
\]
We say that $f$ is m-dissipative if the range of $\lambda\Id-f$ is all the space $\mathcal{K}$ for some $\lambda>0$ (and so for all $\lambda>0$).
\end{defi}

Using the notion of subdifferential we have the following useful charaterization for the dissipative maps.

\begin{prop}
Let $f:\Dom(f)\subseteq \mathcal{K}\ra\mathcal{K}$. $f$ is dissipative if and only if, for any $x,y\in \Dom(f)$ there exists $z^*\in\partial\norm{x-y}$ such that
\begin{equation}\label{disban}
\dscal{f(x)-f(y)}{z^*}{\mathcal{K}}\leq 0.
\end{equation}
If $\mathcal{K}$ is a Hilbert space \eqref{disban} becomes
\begin{equation*}
\scal{f(x)-f(y)}{x-y}_{\mathcal{K}}\leq 0.
\end{equation*}
\end{prop}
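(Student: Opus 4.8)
The plan is to fix $x,y\in\Dom(f)$ and abbreviate $u:=x-y$ and $v:=f(x)-f(y)$, so that the dissipativity inequality \eqref{disban1} reads $\norm{u-\alpha v}_\K\geq\norm{u}_\K$ for all $\alpha>0$, and the claim to prove is that this holds if and only if there is some $z^*\in\partial\norm{u}$ with $\dscal{v}{z^*}{\K}\leq 0$. The case $u=0$ forces $v=0$, and both conditions then hold trivially, so I may assume $u\neq 0$. The Hilbert-space reformulation will follow at the very end by recalling that the Riesz identification makes the duality map single-valued, $\partial\norm{u}=\{u/\norm{u}\}$ for $u\neq0$, so that ``$\exists z^*\in\partial\norm{u}$ with $\dscal{v}{z^*}{\K}\leq0$'' collapses to exactly $\scal{v}{u}_\K\leq0$.

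For the ``if'' direction I would argue directly: given $z^*\in\partial\norm{u}$ with $\dscal{v}{z^*}{\K}\leq 0$, for every $\alpha>0$ one has
\[\norm{u-\alpha v}_\K\geq \dscal{u-\alpha v}{z^*}{\K}=\norm{u}_\K-\alpha\,\dscal{v}{z^*}{\K}\geq\norm{u}_\K,\]
where the first inequality uses $\norm{z^*}_{\K^*}=1$, the equality uses $\dscal{u}{z^*}{\K}=\norm{u}_\K$, and the last inequality uses the sign of $\dscal{v}{z^*}{\K}$. This is exactly \eqref{disban1}.

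For the ``only if'' direction the idea is to produce $z^*$ as a weak-$*$ limit of support functionals of the perturbed vectors. For each $\alpha>0$ the Hahn--Banach theorem supplies $z_\alpha^*\in\partial\norm{u-\alpha v}$, i.e. $\norm{z_\alpha^*}_{\K^*}=1$ and $\dscal{u-\alpha v}{z_\alpha^*}{\K}=\norm{u-\alpha v}_\K$. Combining this with dissipativity gives, on one hand, $\dscal{v}{z_\alpha^*}{\K}\leq 0$ (since $\dscal{u}{z_\alpha^*}{\K}\leq\norm{u}_\K\leq\norm{u-\alpha v}_\K=\dscal{u}{z_\alpha^*}{\K}-\alpha\,\dscal{v}{z_\alpha^*}{\K}$), and on the other hand the two-sided bound $\norm{u}_\K-2\alpha\norm{v}_\K\leq\dscal{u}{z_\alpha^*}{\K}\leq\norm{u}_\K$ obtained from $\dscal{u}{z_\alpha^*}{\K}=\norm{u-\alpha v}_\K+\alpha\,\dscal{v}{z_\alpha^*}{\K}$ together with the triangle inequality and $\dscal{v}{z_\alpha^*}{\K}\geq-\norm{v}_\K$. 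Since $\K$ is separable, the closed unit ball of $\K^*$ is weak-$*$ sequentially compact, so along a sequence $\alpha_n\downarrow 0$ I can extract $z_{\alpha_n}^*\rightharpoonup^* z^*$. Passing to the limit, the two-sided bound yields $\dscal{u}{z^*}{\K}=\norm{u}_\K$; weak-$*$ lower semicontinuity of the dual norm gives $\norm{z^*}_{\K^*}\leq 1$, which together with $\dscal{u}{z^*}{\K}=\norm{u}_\K$ and $u\neq0$ forces $\norm{z^*}_{\K^*}=1$, so $z^*\in\partial\norm{u}$; and $\dscal{v}{z_{\alpha_n}^*}{\K}\leq0$ passes to $\dscal{v}{z^*}{\K}\leq0$.

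The main obstacle is precisely this final limiting step: one starts with a whole family $\{z_\alpha^*\}$ carrying no a priori convergence, and the argument rests on weak-$*$ compactness of the dual unit ball (Banach--Alaoglu) together with the separability of $\K$ to make that compactness sequential, plus lower semicontinuity of the norm to upgrade $\norm{z^*}_{\K^*}\leq1$ to $z^*\in\partial\norm{u}$ rather than a merely norm-nonincreasing limit. Everything else reduces to elementary manipulation of the defining inequalities, and the Hilbert-space case is then immediate.
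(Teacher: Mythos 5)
Your proof is correct: the ``if'' direction is the direct estimate via a norming functional, and the ``only if'' direction correctly extracts a weak-$*$ limit of support functionals of $x-y-\alpha(f(x)-f(y))$ as $\alpha\downarrow 0$, using the separability of $\mathcal{K}$ (assumed in this subsection) to get sequential compactness of the dual unit ball and lower semicontinuity of the dual norm to land in $\partial\norm{x-y}$. The paper itself states this proposition without proof, deferring to \cite[Appendix A]{CER1} and \cite[Appendix D]{DA-ZA4}, and your argument is exactly the classical one given there, so there is nothing to add.
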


\subsection{Semigroups}\label{P3}
In this subsection we recall some basic definitions and results of the semigroups theory. We refer to \cite[Chapter 2]{LUN1} and \cite[Chapter II]{EN-NA1}.

Let $\K$ be a separable Banach space. Let $T(t)$ be a semigroup on $B_b(\K)$.
\begin{enumerate}
\item We say that $T(t)$ is non-negative if for any non-negative valued $\varphi\in B_b(\K)$ and for any $t\geq 0$, $T(t)\varphi$ has non-negative values.
\item We say that $T(t)$ is Feller, if for any $t\geq 0$ we have
\[
T(t)\left(C_b(\K)\right)\subseteq C_b(\K).
\]
\item We say that $T(t)$ is contractive, if for any $t\geq 0$ and $\varphi\in B_b(\K)$ we have
\[
\norm{T(t)\varphi}_{\infty}\leq \norm{\varphi}_{\infty}.
\]
\end{enumerate}   

\begin{defi}\label{defiinv}
Let $\mu\in\mathscr{P}(\K)$ (the set of all Borel probability measures on $\K$) we say that $\mu$ is invariant for $T(t)$ if, for any $\varphi\in C_b(\K)$ and $t\geq 0$, we have
\[
\int_\K T(t)\varphi(x)\nu(dx)=\int_\K \varphi(x)\nu(dx).
\]
\end{defi}

Let $B:\Dom(B)\subseteq \K\ra\K$. We denote by $\rho(B)$ the resolvent set of $B$ and for $\lambda\in\rho(B)$ we denote by $R(\lambda,B)$ the resolvent operator of $B$. 

We consider the complexification of $\K$, and we still denote it by $\K$. Let $B:\Dom(B)\subseteq\K\ra\K$ be a sectorial operator, namely there exist $M>0$, $\eta_0\in\R$ and $\theta_0\in (\pi/2,\pi]$ such that
\begin{equation*}
S_{0}:=\{\lambda\in\CC\; |\; \lambda\neq \eta_0,\; \vert \mbox{arg}(\lambda-\eta_0)\vert<\theta_0 \}\subseteq\rho(B);
\end{equation*}
\begin{equation}\label{eta0}
\norm{R(\lambda,B)}_{\mathcal{L}(\K)}\leq\frac{M}{\abs{\lambda-\eta_0}},\quad \forall\lambda\in S_0.
\end{equation}
We denote by $e^{tB}$ the analytic semigroup generated by $B$. We recall some basic properties:
\begin{enumerate}
\item there exists $M_0>0$ such that for any $t>0$
\begin{equation}\label{anlitic1}
\norm{e^{tB}}_{\mathcal{L}(\K)}\leq M_0e^{\eta_0t};
\end{equation}
\item for any $t>0$ and $h\in\N$
\begin{equation}\label{anlitic2}
e^{tB}(\K)\subseteq \Dom(B^h);
\end{equation}
\item for any $x\in\overline{\Dom(B)}$
\begin{equation}\label{anlitic3}
\lim_{n\ra +\infty}nR(n,B)x=x;
\end{equation} 
\item let $f(t)=e^{tB}$, we have 
\begin{equation}\label{anlitic4}
f\in C^{\infty}((0,+\infty),\mathcal{L}(\K)).
\end{equation} 
\end{enumerate}

\begin{rmk}
Properties analogous to \eqref{anlitic1} and \eqref{anlitic3} are also verified by strongly continuous semigroups. Moreover, for the strongly continuous semigroup, the function $f$ of \eqref{anlitic4} belongs to $C([0,+\infty),\mathcal{L}(\K))$. 
\end{rmk}

\subsection{The Ornstein--Uhlenbeck case}
We recall some results about the Ornstein--Uhlenbeck semigroups that will be used in Subsection \ref{core}. We assume that $F=0$ and that
\begin{equation*}
\int^t_0\Tr[e^{sA}Ce^{sA^*}]ds<+\infty,\quad\forall t\geq 0.
\end{equation*} 
The SPDE \eqref{eqFO} becomes
\begin{gather}\label{eq0}
\eqsys{
dZ(t,x)=AZ(t,x)dt+\sqrt{C}dW(t), & t>0;\\
Z(0,x)=x\in \X,
}
\end{gather}
where ${W(t)}_{t\geq 0}$ is a $\X$-valued cylindrical Wiener process. We refer to \cite[Section 4.1.2]{DA-ZA4} and \cite[Section 1]{PES-ZA1} for a definition of cylindrical Wiener process.
It is well known that $Z(t,x)=e^{tA}x+W_A(t)$ is the unique mild solution of \eqref{eq0} and, for any $t>0$, we have
\begin{equation*}
W_A(t):=\int^t_0e^{(t-s)A}\sqrt{C}dW(s)\sim\mathcal{N}(0,Q_t),\quad Q_tx=\int^t_0e^{tA}Ce^{tA^*}.
\end{equation*}
So, via a change of variable, for any $\varphi\in B_b(\X)$ we obtain
\begin{equation}\label{Koyuki}
T(t)\varphi(x):=\E[\varphi(Z(t,x))]=\int_{\Omega}\varphi(e^{tA}x+W_A(t))d\mathbb{P}=\int_\X \varphi(e^{tA}x+y)\mathcal{N}(0, Q_t)(dy).
\end{equation}
Now we consider the Banach space
\[C_{b,2}(\X):=\set{f:\X\ra \R \tc x\mapsto\frac{f(x)}{1+\norm{x}^2}\text{ belongs to } C_b(\X)}.\] 
endowed with the norm
\[\norm{f}_{b,2}:=\sup_{x\in\X}\left(\frac{\vert f(x)\vert}{1+\norm{x}^2}\right),\qquad f\in C_{b,2}(\X).\]
It is known that the semigroup $T(t)$ is not strongly continuous on $(C_{b,2}(\X), \norm{\cdot}_{b,2})$. For a detailed study of the semigroup $T(t)$, defined in \eqref{Koyuki}, in spaces of continuous functions with weighted sup-norms, we refer to \cite{CER2, CER3}, \cite[Section 2.8.3]{DA1} and \cite[Section 2]{DA-TU2}. Instead the semigroup $T(t)$ is strongly continuous on $C_{b,2}(\X)$ with respect the mixed topology. For an in-depth study of the mixed topology we refer to \cite{GO-KO1}; in the following theorems we list some properties that will be used.

\begin{thm}[Theorems 4.1, 4.2 and 4.5 of \cite{GO-KO1}]\label{KOKO}
$ $
\begin{enumerate}[\rm (i)]
\item The semigroup $T(t)$, introduced in \eqref{Koyuki}, is strongly continuous on $C_{b,2}(\X)$ with respect to the mixed topology. We denote by $(L_{b,2},\Dom(L_{b,2}))$ its infinitesimal generator.

\item For any $\lambda>0$, $\varphi\in C_{b,2}(\X)$ and $x\in\X$, we consider the integral
\begin{equation*}
J(\lambda)\varphi:=\int^{+\infty}_0e^{-\lambda t}T(t)\varphi dt.
\end{equation*}
For every $\lambda>0$, the operator
\[J(\lambda):(C_{b,2}(\X),\tau_M)\ra(C_{b,2}(\X),\tau_M)\]
is continuous (here $\tau_M$ denotes the mixed topology), and $J(\lambda)\varphi=R(\lambda,L_{b,2})\varphi$.

\item  $(L_{b,2},\Dom(L_{b,2}))$ is the closure of the operator $L_0$ (defined in \eqref{OPFOC}) to $C_{b,2}(\X)$, endowed with the mixed topology.

\end{enumerate}
\end{thm}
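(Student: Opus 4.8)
The plan is to treat the three assertions in sequence, since (ii) and (iii) both lean on the strong continuity established in (i). The structural fact I would exploit is that the mixed topology $\tau_M$ coincides, on $\norm{\cdot}_{b,2}$-bounded sets, with the topology of uniform convergence on compact subsets of $\X$; consequently a locally $\norm{\cdot}_{b,2}$-bounded semigroup is $\tau_M$-strongly continuous precisely when each orbit map $t\mapsto T(t)\varphi$ is continuous for that coarser topology. For (i) I would first establish local boundedness in $\norm{\cdot}_{b,2}$: since $\abs{\varphi(z)}\le\norm{\varphi}_{b,2}(1+\norm{z}^2)$, the Gaussian representation \eqref{Koyuki} gives $\abs{T(t)\varphi(x)}\le\norm{\varphi}_{b,2}\,\E[1+\norm{e^{tA}x+W_A(t)}^2]$, and using the growth estimate $\norm{e^{tA}}_{\mathcal{L}(\X)}\le Me^{\omega t}$ (the $C_0$-analogue of \eqref{anlitic1}) together with $\E\norm{W_A(t)}^2=\Tr[Q_t]$ one obtains $\norm{T(t)\varphi}_{b,2}\le c(t)\norm{\varphi}_{b,2}$ with $c$ bounded on compact $t$-intervals. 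Pointwise convergence $T(t)\varphi(x)\to\varphi(x)$ as $t\to0^+$ then follows from strong continuity of $e^{tA}$ (so $e^{tA}x\to x$), from $\Tr[Q_t]\to0$ (so $W_A(t)\to0$ in $L^2$), and dominated convergence; uniform continuity of $\varphi$ on compacts plus the uniform moment bound upgrades this to convergence uniform on compact sets. Combining local boundedness with compact-uniform continuity of the orbit maps yields $\tau_M$-strong continuity and the generator $(L_{b,2},\Dom(L_{b,2}))$.

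For (ii), I would fix $\lambda>\omega$ so that the exponential bound makes $\int_0^{+\infty}e^{-\lambda t}\norm{T(t)\varphi}_{b,2}\,dt$ finite; the integral then converges as a $\tau_M$-valued improper Riemann integral and defines $J(\lambda)\varphi\in C_{b,2}(\X)$, with $J(\lambda)$ being $\tau_M$-continuous. The identity $J(\lambda)=R(\lambda,L_{b,2})$ is the usual Laplace-transform computation: one checks $(\lambda-L_{b,2})J(\lambda)\varphi=\varphi$ and $J(\lambda)(\lambda-L_{b,2})\varphi=\varphi$ by differentiating $t\mapsto T(t)\varphi$ and integrating by parts, the only subtlety being that every interchange of $L_{b,2}$ with the integral must be justified through $\tau_M$-continuity rather than through norm estimates.

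For (iii), I would first verify by a direct Dynkin/It\^o computation on $Z(t,x)=e^{tA}x+W_A(t)$ that each $\varphi\in\xi_A(\X)$ lies in $\Dom(L_{b,2})$ with $L_{b,2}\varphi=L_0\varphi$; this is where the explicit form \eqref{OPFOC} enters, the exponential test functions being chosen so that $\D\varphi$ and $\D^2\varphi$ are computed explicitly and $A^*$ acts on $h\in\Dom(A^*)$. It then remains to show $\xi_A(\X)$ is a $\tau_M$-core. Since $J(\lambda)$ maps into $\Dom(L_{b,2})$ and $\xi_A(\X)$ is $\tau_M$-dense and rich enough to approximate, the range $(\lambda-L_0)\big(\xi_A(\X)\big)$ is $\tau_M$-dense in $C_{b,2}(\X)$, and a standard core criterion (range density together with $\tau_M$-closedness of $L_{b,2}$) identifies $(L_{b,2},\Dom(L_{b,2}))$ with the $\tau_M$-closure of $L_0$.

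The main obstacle is that $\tau_M$ is neither normable nor metrizable, so none of the convergence statements reduces to sequences: the approximation argument underlying the core property in (iii) must be carried out with the equicontinuity-on-bounded-sets description of $\tau_M$, and controlling $L_0$ applied to the approximants (not merely the approximants themselves) in this non-normable setting is the delicate point. This is precisely the content of Theorems 4.1, 4.2 and 4.5 of \cite{GO-KO1}, on which I would ultimately rely.
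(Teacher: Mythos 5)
The paper offers no proof of this theorem: it is stated as a direct quotation of Theorems 4.1, 4.2 and 4.5 of \cite{GO-KO1}, and your proposal likewise defers to that reference in the end, so the two are in essential agreement. Your intermediate sketch (local $\norm{\cdot}_{b,2}$-boundedness plus uniform-on-compacts continuity of orbits for (i), the Laplace-transform identity for (ii), and the Dynkin computation on $\xi_A(\X)$ followed by a range-density core argument for (iii)) is a faithful outline of the argument in \cite{GO-KO1}; the only small caveat is that your convergence argument for the resolvent integral is stated for $\lambda>\omega$ rather than for every $\lambda>0$ as in the quoted statement, which matters only if the growth bound of $e^{tA}$ is positive.
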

We remark that $L_{b,2}$ is the weak infinitesimal generator of the semigroup $T(t)$ on $C_{b,2}(\X)$ in the sense of \cite{CER2, CER3} (see \cite[Remark 4.3]{GO-KO1}). Finally we recall the following approximation result. 

\begin{prop}[Propositions 2.5 and 2.6 of \cite{DA-TU2}]\label{appxiA}
Let $\varphi\in \Dom(L_{b,2})\cap C^1_b(\X)$. There exists a family $\{\varphi_{n_1,n_2,n_3,n_4}\,|\,n_1,n_2,n_3,n_4\in\N\}\subseteq\xi_A(\X)$ such that for every $x\in\X$
\begin{align*}
\lim_{n_1\rightarrow+\infty}\lim_{n_2\rightarrow+\infty}\lim_{n_3\rightarrow+\infty}\lim_{n_4\rightarrow+\infty}&\varphi_{n_1,n_2,n_3,n_4}(x)=\varphi(x);\\
\lim_{n_1\rightarrow+\infty}\lim_{n_2\rightarrow+\infty}\lim_{n_3\rightarrow+\infty}\lim_{n_4\rightarrow+\infty}&\D\varphi_{n_1,n_2,n_3,n_4}(x)=\D\varphi(x);\\ 
\lim_{n_1\rightarrow+\infty}\lim_{n_2\rightarrow+\infty}\lim_{n_3\rightarrow+\infty}\lim_{n_4\rightarrow+\infty}&L_{b,2}\varphi_{n_1,n_2,n_3,n_4}(x)=L_{b,2}\varphi(x).
\end{align*}
Furthermore there exists a positive constant $C_\varphi$ such that, for any ${n_1,n_2,n_3,n_4}\in\N$ and $x\in\X$, it holds
\begin{equation}\label{Konatsu}
\vert\varphi_{n_1,n_2,n_3,n_4}(x)\vert+ \norm{\D\varphi_{n_1,n_2,n_3,n_4}(x)}+\vert L_{b,2}\varphi_{n_1,n_2,n_3,n_4}(x)\vert\leq C_{\varphi}(1+\norm{x}^2).
\end{equation}
\end{prop}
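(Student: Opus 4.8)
The plan is to realize the family $\{\varphi_{n_1,n_2,n_3,n_4}\}$ as the explicit net of a single mixed-topology approximation of $\varphi$ by elements of $\xi_A(\X)$. The point is that, by Theorem \ref{KOKO}, convergence in the mixed topology of $C_{b,2}(\X)$ amounts to boundedness in $\norm{\cdot}_{b,2}$ together with pointwise (in fact locally uniform) convergence, so exhibiting such an approximating net automatically produces both the three pointwise limits and the uniform bound \eqref{Konatsu}. The computation driving everything is the explicit action of $L_0$ on the generators of $\xi_A(\X)$: for $h\in\Dom(A^*)$ and $\varphi_h(x):=e^{i\scal{x}{h}}$ one has $\D\varphi_h(x)=ih\,e^{i\scal{x}{h}}$ and
\[
L_0\varphi_h(x)=\pa{-\tfrac12\scal{Ch}{h}+i\scal{A^*h}{x}}e^{i\scal{x}{h}},
\]
so $L_0$ sends a finite linear combination of such exponentials into a finite sum of terms of the form $(\text{affine in }x)\times e^{i\scal{x}{h_j}}$, the only unbounded $x$-dependence being the linear coefficient $\scal{A^*h}{x}$. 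This is exactly what both forces and controls the quadratic weight in \eqref{Konatsu}.

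I would then build four approximations, in the order dictated by the nested limits (innermost $n_4$ first). Fix an orthonormal basis $\{e_k\}_k$ of $\X$ contained in the dense set $\Dom(A^*)$, and let $\pi_m$ be the orthogonal projection onto $\X_m:=\Span\{e_1,\dots,e_m\}$. The layers are: (i) the index $n_1$ encodes the cylindrical reduction $\varphi\mapsto\varphi\circ\pi_{n_1}$, depending only on $\pi_{n_1}x$, removed last; (ii) the index $n_2$ regularizes $\varphi\circ\pi_{n_1}$, viewed on $\X_{n_1}\cong\R^{n_1}$, by convolution with a Gaussian mollifier of variance $1/n_2$, producing a Schwartz function $g_{n_1,n_2}$; (iii) the index $n_3$ truncates the Fourier-inversion integral of $g_{n_1,n_2}$ to the frequency ball $\{h\in\X_{n_1}:\abs{h}\le n_3\}$; (iv) the innermost index $n_4$ replaces the truncated integral by its Riemann sums on a grid of mesh tending to $0$. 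Each Riemann sum is a finite combination $\sum_j c_j e^{i\scal{x}{h_j}}$ with $h_j\in\X_{n_1}\subseteq\Dom(A^*)$, hence an element of $\xi_A(\X)$, and defines $\varphi_{n_1,n_2,n_3,n_4}$.

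The three convergences are then checked by letting the indices tend to infinity in the stated order, differentiating the Fourier representation termwise and applying $L_0$ termwise via the displayed formula. The $n_4$-limit (Riemann sum to truncated integral) and the $n_3$-limit (Fourier inversion) are classical, with dominated convergence applying because $\abs{\widehat{g_{n_1,n_2}}(h)}$, $\abs{h}\abs{\widehat{g_{n_1,n_2}}(h)}$ and $\abs{A^*h}\abs{\widehat{g_{n_1,n_2}}(h)}$ are all integrable on $\X_{n_1}$ (here $A^*$ is bounded on the finite-dimensional $\X_{n_1}$). The $n_2$-limit removes the mollification, returning $\varphi\circ\pi_{n_1}$, its gradient, and a limit $\ell_{n_1}$ of $L_0 g_{n_1,n_2}$; the outermost $n_1$-limit returns $\varphi$, $\D\varphi$, and should identify $\lim_{n_1}\ell_{n_1}=L_{b,2}\varphi$. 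For the uniform bound, value and gradient are controlled by constants of size $\norm{\varphi}_{b,2}$ and $\norm{\D\varphi}_\infty$, because projection and mollification are, up to a vanishing variance term, contractions of $\norm{\cdot}_{b,2}$; the $L_{b,2}$-term is bounded via the displayed formula by $C(1+\norm{x})\le C_\varphi(1+\norm{x}^2)$, the quadratic weight being precisely the room needed to absorb the linear factor $\scal{A^*h}{x}$.

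The main obstacle is the $L_{b,2}$-convergence in the outermost ($n_1\to\infty$) limit together with the uniformity of $C_\varphi$, since this is the only place where the unbounded operator $A^*$ genuinely enters and where a finite-dimensional computation must be matched to the infinite-dimensional generator. Concretely, one must show $\ell_{n_1}\to L_{b,2}\varphi$ pointwise with a bound $C_\varphi(1+\norm{x}^2)$ independent of $n_1$. I would obtain this by using that, by Theorem \ref{KOKO}(iii), $L_{b,2}$ is the mixed-topology closure of $L_0$ on $C_{b,2}(\X)$, and by choosing the basis $\{e_k\}$ so that $\pi_{n_1}$ interacts well with $A^*$ and $C$ (for instance an $A^*$-almost invariance of $\X_{n_1}$), so that $\ell_{n_1}$ is recognised as the finite-dimensional Ornstein--Uhlenbeck operator applied to the cylindrical function and the passage to the limit follows from closedness in the mixed topology. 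Controlling this single step in the mixed topology is what simultaneously packages the pointwise convergence and the uniform quadratic estimate \eqref{Konatsu}.
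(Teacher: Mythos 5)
First, a remark on the comparison: the paper does not prove this proposition at all --- it is imported verbatim from Da Prato--Tubaro and Da Prato's lecture notes, and the text explicitly defers to those references for the proof. So your construction has to stand on its own. Its architecture (cylindrical reduction, finite-dimensional mollification, truncation of the Fourier inversion integral, Riemann sums with frequencies in $\Dom(A^*)$) is indeed the standard skeleton behind the four nested indices, and the three inner limits ($n_4$, $n_3$, $n_2$), together with the bounds on $\varphi_{n_1,n_2,n_3,n_4}$ and $\D\varphi_{n_1,n_2,n_3,n_4}$, are essentially fine.

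The genuine gap sits exactly where you place the ``main obstacle'', and neither of your two proposed fixes closes it. (i) You ask for an orthonormal basis of $\X$ contained in $\Dom(A^*)$ whose finite-dimensional spans $\X_{n_1}$ are ($A^*$-almost) invariant. Under the hypotheses in force when this proposition is applied, $A$ is only assumed to generate an analytic semigroup; it is neither normal nor self-adjoint, so no such basis need exist. (Da Prato--Tubaro do have one, because they work with $A$ self-adjoint and commuting with $C$; that structure is not available here, and self-adjointness enters this paper only later, in Hypotheses \ref{Sobmu}.) (ii) Invoking Theorem \ref{KOKO}(iii), i.e. that $L_{b,2}$ is the mixed-topology closure of $L_0$, is circular at this point: closedness identifies the limit of $L_0\varphi_{n_1,\cdot}$ \emph{provided} that limit exists, whereas producing the limit is the whole problem. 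It cannot be obtained termwise, because for $\varphi\in\Dom(L_{b,2})\cap C^1_b(\X)$ the gradient $\D\varphi(x)$ need not belong to $\Dom(A^*)$, so the candidate pointwise limit $\scal{x}{A^*\D\varphi(x)}$ of the drift terms $\scal{x}{A^*\pi_{n_1}\D\varphi(\pi_{n_1}x)}$ is not even well defined; the drift contribution to $L_{b,2}\varphi$ exists only through the abstract closure. The same difficulty infects the uniform bound \eqref{Konatsu}: the coefficient of the linear-in-$x$ part of $L_0\varphi_{n_1,n_2,n_3,n_4}$ is controlled by $\int_{\X_{n_1}}\norm{A^*h}\,\vert\hat{g}_{n_1,n_2}(h)\vert\,dh$, and since $\norm{A^*{}_{|\X_{n_1}}}_{\mathcal{L}(\X_{n_1},\X)}$ may blow up as $n_1\to\infty$, your argument yields a constant depending on $n_1$ rather than the single $C_\varphi$ the statement requires.

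To close the gap one must avoid ever pairing $\D\varphi$ with $A^*$: the cited proofs route the $L_{b,2}$-convergence through the resolvent/Mehler representation of Theorem \ref{KOKO}(ii), so that $L_{b,2}$ of each approximant is computed as $\lambda(\cdot)-f(\cdot)$ for explicit data converging to $\lambda\varphi-L_{b,2}\varphi$, and the quadratic bound is inherited from $\norm{L_{b,2}\varphi}_{b,2}$ and $\norm{\D\varphi}_\infty$ instead of from a frequency integral. As written, your proposal does not establish the third convergence nor the uniformity of $C_\varphi$ in \eqref{Konatsu}.
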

\noindent For a proof of the previous result we refer to \cite[Section 2.8.3]{DA1} or \cite[Section 2]{DA-TU2}. See also \cite[Section 8]{DA-LU-TU1}.

\section{The SPDE \eqref{eqFO}}\label{GMS}
In this section we will study the generalized mild solution $\mi{X}$ of the SPDE \eqref{eqFO} and the invariant measure of the transition semigroup $P(t)$ of \eqref{semiX}. In subsection \ref{mildinE} we will prove that, for any $x\in E$, the SPDE \eqref{eqFO} has a unique mild solution. In subsection \ref{GMS2} we will focus on the generalized mild solution of the SPDE \eqref{eqFO} and on the transition semigroup \eqref{semiX}. In subsection \ref{misinvS} we will prove that the semigroup $P(t)$ has a unique invariant measure $\nu$ and we will investigate some properties of $\nu$. 

We recall some useful inequalities that we will use frequently in this section.
\begin{equation}\label{youngyoung}
ab\leq \frac{(q-1)(\epsilon a)^{q/(q-1)}}{q}+\frac{(b/\epsilon)^{q}}{q},\quad \forall a,b,\epsilon>0,\; q>1,
\end{equation}
If $\K$ is a Banach space for every $h_1,h_2\in \K$ and $r\geq 1$ it holds
\begin{align}\label{Nina}
\norm{h_1-h_2}_\K^r\geq 2^{1-r}\norm{h_1}_\K^r-\norm{h_2}_\K^r.
\end{align}  

\subsection{The mild solution for $x$ belonging to $E$}\label{mildinE}

We state the hypotheses under which we work in this subsection.

\begin{hyp}\label{EU2}$ $

\begin{enumerate}[\rm(i)]

\item\label{EU2.1} There exists a Banach space $E\subseteq \Dom(F)$ Borel measurable, densely and continuously embedded in $\X$ such that $F(E)\subseteq E$.

\item\label{EU2.3} $A$ generates a strongly continuous and analytic semigroup $e^{tA}$ on $\X$ and $A_E$ (the part of $A$ in $E$) generates an analytic semigroup $e^{tA_E}$ on $E$.

\item\label{EU2.4} There exists $\zeta\in\R$ such that
\begin{enumerate}
\item $A+F-\zeta\Id$ is dissipative in $\X$;
\item $A_E+F_{|E}-\zeta\Id$ is dissipative in $E$.
\end{enumerate}

\item\label{EU2.5} $\{W_{A}(t)\}_{t\geq 0}$ is a $E$-valued continuous process such that
\begin{equation}\label{condl2}
\int^T_0\Tr[e^{sA}Ce^{sA^*}]ds<+\infty,\quad T>0.
\end{equation}

\item\label{EU2.7} There exist $M>0$ and $m\in\N$ such that 
\[
\norm{F(x)}_E\leq M(1+\norm{x}^m_E),\quad x\in E.
\]

\item\label{EU2.6} $F_{|E}:E\ra E$ is locally Lipschitz on $E$, namely $F_{|E}$ is Lipschitz continuous on bounded sets of $E$

\end{enumerate}
\end{hyp}

\begin{rmk}\label{rmkb}
$ $
\begin{enumerate}
\item Hypotheses \ref{EU2}\eqref{EU2.7} or \ref{EU2}\eqref{EU2.6} imply that $F_{|E}$ maps bounded sets of $E$ into bounded sets of $E$, and so, since $E$ is continuously embedded in $\X$, $F$ maps bounded sets of $E$ into bounded sets of $\X$.

\item Hypothesis \ref{EU2}\eqref{EU2.6} does not imply that $F:\Dom(\X)\subseteq\X\ra\X$ is continuous, however it implies that $F_{|E}:E\ra\X$ is continuous.
\end{enumerate}
\end{rmk}

\begin{rmk}\label{momconvu}
For any $T>0$, by \eqref{condl2} and \cite[Theorem 5.2]{DA-ZA4}, the process $W_{A,T}:=\{W_A(t)\}_{t\in [0,T]}$ can be seen as a $L^2([0,T],\lambda,\X)$-valued gaussian random variable, where $\lambda$ is the Lebsegue measure. Moreover by Hypotheses \ref{EU2}\eqref{EU2.5} and the same arguments used in \cite[Remark 3.4]{MAS1}, the process $W_{A,T}$ is a $C([0,T],E)$-valued gaussian random variable. Hence
\[
\E[\sup_{t\in [0,T]}\norm{W_{A}(t)}^p_{E}]<+\infty,\quad \forall p\geq 1,
\]
by Hypotheses \ref{EU2}\eqref{EU2.7} we have
\[
\E[\sup_{t\in [0,T]}\norm{F(W_{A}(t))}^p_{E}]<+\infty,\quad \forall p\geq 1,
\]
and, since $E$ is continuously embedded in $\X$, we obtain
\[
\E[\sup_{t\in [0,T]}\norm{F(W_{A}(t))}^p+\sup_{t\in [0,T]}\norm{W_{A}(t)}^p]<+\infty,\quad \forall p\geq 1.
\]
\end{rmk}

We recall the standard definition of mild solution.

\begin{defi}\label{Mild1}
For any  $x\in E$ we call mild solution of \eqref{eqFO} a $E$-valued process $\mi{X}$ such that, for any $t\geq 0$, we have
\begin{align}\label{mildF}
& X(t,x)(\omega)=e^{tA}x+\int_0^te^{(t-s)A}F(X(s,x)(\omega))ds+W_A(t)(\omega),\quad\qc,
\end{align}
Moreover we say that the mild solution $\mi{X}$ is unique if every process $\mi{Y}$ that verifies \eqref{mildF} then $\mi{Y}$ is a version of $\mi{X}$.
\end{defi}

To prove that, for any $x\in E$, the SPDE \eqref{eqFO} has a unique mild solution $\mii{X}$ we need to exploit an approximating problem. For simplicity, from here on we still denote by $A$ the part of $A$ in $E$. For any $x\in E$ and large $n\in\N$, we introduce the approximate problem
\begin{gather}\label{eqFOn}
\eqsys{
dX_n(t,x)=\big(AX_n(t,x)+F(X_n(t,x))\big)dt+RdW(t), & t>0;\\
X_n(0,x)=nR(n,A)x.
}
\end{gather}
\begin{rmk}\label{rmketa}
By Hypotheses \ref{EU2}(\ref{EU2.3}), $e^{tA}$ verifies \eqref{eta0} with  $\eta_0\in\R$ and $M_0>0$. So $R(n,A)$ is defined only for $n>\eta_0$, hence if $\eta_0>1$, then we consider \eqref{eqauxapp} for $n>\eta_0$.
\end{rmk}

Now we are going to prove that, for any $x\in E$ and large $n\in\N$ the SPDE \eqref{eqFOn} has unique mild solution $\mi{X_n}\in C_p([0,T],E)$, for any $p\geq 1$ and $T>0$ (see Definition \eqref{spacep}). To do this we consider the equation
\begin{gather}\label{eqauxapp}
\eqsys{
\dfrac{dY_n}{dt}(t,x)=AY_{n}(t,x)+F(Y_{n}(t,x)+W_{A}(t)), & t>0;\\
Y_{n}(0,x)=n R(n,A)x.
}
\end{gather}

If we show that, for any $x\in E$ and large $n\in\N$, equation \eqref{eqauxapp} has a unique mild solution $\mi{Y_n}\in C_p([0,T],E)$, for any $p\geq 1$ and $T>0$ (see Definition \eqref{spacep}), then, by Remark \ref{momconvu}, the process $\mi{X_n}$ defined by
\begin{equation}\label{processon}
X_n(t,x):=Y_n(t,x)+W_A(t),\quad \qc,
\end{equation}
is the unique mild solution of \eqref{eqFOn} in $C_p([0,T],E)$, for any $p\geq 1$ and $T>0$.

\begin{prop}\label{solapp}
Assume that Hypotheses \ref{EU2} hold true. For any $x\in E$ and large $n\in\N$ problem \eqref{eqauxapp} has a unique mild solution $\mi{Y_n}\in C_p([0,T],E)$,  for any $p\geq 1$ and $T>0$. Moreover there exists a sequence of processes $\{\{Y_{n,k}(t,x)\}_{t\geq 0}\}_{k\in\N}$ such that
\[
t\ra Y_{n,k}(t,x)\in C^1([0,T],E)\cap C([0,T],\Dom(A)),\quad \forall T>0,\;\forall k\in\N,\;\quad \qc
\]
\begin{align}\label{coco1}
\lim_{k\ra +\infty}\norm{Y_{n,k}(\cdot,x)-Y_{n}(\cdot,x)}_{C([0,T],E)}=0,\quad \lim_{k\ra +\infty}\norm{o_{n,k}(x)}_{C([0,T],E)}=0,\quad\forall\; T>0\; \qc
\end{align}
where
\begin{equation}\label{opiccolo}
o_{n,k}(t,x)=\dfrac{dY_{n,k}}{dt}(t,x)-AY_{n,k}(t,x)-F(Y_{n,k}(t,x)+W_{A}(t)),\quad \qc
\end{equation}
In addition for any $p\geq 1$ there exist $C_{p}:=C_{p}(\zeta)>0$ and $\kappa_{p}:=\kappa_{p}(\zeta)\in\R$ such that for any $x\in E$, large $n\in\N$ and $t>0$
\begin{align}
&\norm{Y_n(t,x)}^p\leq  C_p\left(e^{\kappa_p t}\norm{x}^p+\int_0^te^{\kappa_p (t-s)}\norm{F(W_A(s))}^p ds\right),\quad\qc\label{stindXn}\\
&\norm{Y_n(t,x)}_E^p
\leq  C_p\left(e^{\kappa_p t}\norm{x}^p_E+\int_0^te^{\kappa_p (t-s)}\norm{F(W_A(s))}^p_Eds\right)\quad\qc\label{stindEn}
\end{align}
\end{prop}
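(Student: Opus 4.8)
The plan is to fix $\omega$ outside a $\mathbb{P}$-null set and read \eqref{eqauxapp} as a deterministic semilinear Cauchy problem in $E$ driven by the continuous path $t\mapsto W_A(t)(\omega)$. By Hypotheses \ref{EU2}\eqref{EU2.5} this path is continuous with values in $E$, and by Hypotheses \ref{EU2}\eqref{EU2.6} the map $(t,y)\mapsto F(y+W_A(t)(\omega))$ is continuous and Lipschitz in $y$ on bounded sets, uniformly for $t$ in compact intervals. Hence a contraction argument for the integral map $\Phi(Y)(t)=e^{tA}nR(n,A)x+\int_0^{t}e^{(t-s)A}F(Y(s)+W_A(s))\,ds$ on $C([0,T_0],E)$, for $T_0$ small, produces a unique local mild solution of \eqref{eqauxapp}; uniqueness and the local Lipschitz bound let us continue it to a maximal interval. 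Global existence on every $[0,T]$ will then follow from the a priori bound \eqref{stindEn}, which rules out blow-up of the $E$-norm.

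To legitimise the differential inequalities behind \eqref{stindXn}--\eqref{stindEn} I introduce the regularised problems. Choose $W_A^{(k)}\in C^1([0,T],E)$ with values in $\Dom(A)$ and $\norm{W_A^{(k)}-W_A}_{C([0,T],E)}\to0$ (for instance a time mollification followed by $kR(k,A)$), and let $Y_{n,k}$ be the mild solution of \eqref{eqauxapp} with $W_A$ replaced by $W_A^{(k)}$ and the same datum $nR(n,A)x\in\Dom(A)$. Since the forcing is now time-regular and the datum lies in $\Dom(A)$, the analytic smoothing of $e^{tA}$ upgrades $Y_{n,k}$ to $C^1([0,T],E)\cap C([0,T],\Dom(A))$ (cf.\ \cite{LUN1}). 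Continuous dependence of mild solutions on the forcing gives the first limit in \eqref{coco1}, and since $Y_{n,k}$ solves its own equation exactly, the defect \eqref{opiccolo} equals $o_{n,k}(t,x)=F(Y_{n,k}(t,x)+W_A^{(k)}(t))-F(Y_{n,k}(t,x)+W_A(t))$, which tends to $0$ in $C([0,T],E)$ by the local Lipschitz continuity of $F_{|E}$ and the uniform bound on $Y_{n,k}$; this is the second limit in \eqref{coco1}.

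The estimates now follow from Hypotheses \ref{EU2}\eqref{EU2.4}. Set $\gamma_k(t):=\norm{Y_{n,k}(t,x)}_E$ and apply the dissipativity of $A_E+F_{|E}-\zeta\Id$ to the pair $y_1=Y_{n,k}(t,x)+W_A^{(k)}(t)$ and $y_2=W_A^{(k)}(t)$, both in $\Dom(A_E)$: there is $z^*\in\partial\norm{Y_{n,k}(t,x)}_E$ with $\dscal{(A+F-\zeta)(y_1)-(A+F-\zeta)(y_2)}{z^*}{E}\le0$. Since $(A+F)(y_1)-(A+F)(y_2)=Y_{n,k}'(t)-F(W_A^{(k)}(t))$ and $\dscal{y_1-y_2}{z^*}{E}=\gamma_k(t)$, this rearranges to $\dscal{Y_{n,k}'(t)}{z^*}{E}\le\zeta\gamma_k(t)+\norm{F(W_A^{(k)}(t))}_E$, and \eqref{Ldiff} yields $\tfrac{d^-\gamma_k}{dt}(t)\le\zeta\gamma_k(t)+\norm{F(W_A^{(k)}(t))}_E$. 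The comparison lemma \eqref{varofcost} bounds $\gamma_k$; for the $p$-th power one estimates $\tfrac{d^-}{dt}\gamma_k^{\,p}=p\gamma_k^{\,p-1}\tfrac{d^-\gamma_k}{dt}$ and absorbs the forcing term via Young's inequality \eqref{youngyoung}, so that \eqref{varofcost} gives a bound of the form $\gamma_k(t)^p\le C_p\big(e^{\kappa_p t}\norm{nR(n,A)x}_E^p+\int_0^te^{\kappa_p(t-s)}\norm{F(W_A^{(k)}(s))}_E^p\,ds\big)$. Letting $k\to\infty$ through \eqref{coco1} and using $\norm{nR(n,A)x}_E\le C\norm{x}_E$ for large $n$ (from \eqref{eta0} applied in $E$) produces \eqref{stindEn}. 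The bound \eqref{stindXn} is obtained verbatim from the dissipativity of $A+F-\zeta\Id$ in $\X$, where $\partial\norm{\cdot}=\{\cdot/\norm{\cdot}\}$.

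The main obstacle is the regularisation step: one must verify both that the approximate problems with forcing $W_A^{(k)}$ remain globally solvable and that the analytic-semigroup smoothing genuinely lifts their mild solutions to the regularity $C^1([0,T],E)\cap C([0,T],\Dom(A))$ claimed in the statement, since this is exactly what makes the pointwise differentiation of $t\mapsto\norm{Y_{n,k}(t,x)}_E$ and the application of \eqref{Ldiff} rigorous. Once the smooth approximations are available with this regularity, the contraction argument, the dissipativity computation and the passage to the limit are routine.
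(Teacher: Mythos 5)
Your overall architecture coincides with the paper's: pathwise reduction, local existence by a contraction on $C([0,T_0],E)$, a subdifferential/dissipativity differential inequality combined with \eqref{Ldiff}, \eqref{youngyoung} and \eqref{varofcost} to obtain the a priori bounds, and passage to the limit along a regular approximating sequence. The one place where you genuinely depart from the paper is also where your argument has a gap: the construction of approximations $Y_{n,k}$ with the regularity $C^1([0,T],E)\cap C([0,T],\Dom(A))$. You mollify the noise path and assert that, since the forcing is now time-regular and the datum lies in $\Dom(A)$, the analytic smoothing of $e^{tA}$ upgrades the mild solution of the regularized problem to a strict one. This does not follow as stated: the forcing of the semilinear problem is $t\mapsto F(Y_{n,k}(t,x)+W_A^{(k)}(t))$, which is a priori only \emph{continuous} in time even when $W_A^{(k)}$ is smooth, and for analytic semigroups a mild solution with merely continuous forcing need not be differentiable nor take values in $\Dom(A)$. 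One could try to rescue the step by a bootstrap (analytic smoothing gives $Y_{n,k}\in C^{\alpha}([0,T],E)$ for every $\alpha<1$, hence the composed forcing is H\"older, hence the solution is strict by the H\"older theory, modulo the compatibility condition at $t=0$), but you neither carry this out nor address the compatibility condition; you explicitly list it as ``the main obstacle'' and leave it open. Since the entire derivation of \eqref{stindXn}--\eqref{stindEn}, and the very meaning of the defect \eqref{opiccolo}, rest on being allowed to differentiate $t\mapsto Y_{n,k}(t,x)$ pointwise, this is a genuine gap rather than a routine verification. There is also a latent circularity to dispel: global solvability of each regularized problem itself requires an a priori bound, which again presupposes the unproved regularity.

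The paper avoids the issue with a different device: once the local mild solution $y_{n,T_0}$ is obtained, it is regarded as the mild solution of the \emph{linear} inhomogeneous problem with the fixed continuous forcing $g(t)=F(y_{n,T_0}(t,x)+w_A(t))$, and \cite[Proposition 4.1.8]{LUN1} guarantees that such a mild solution is a \emph{strong} solution, i.e.\ the limit in $C([0,T_0],E)$ of functions in $C^1([0,T_0],E)\cap C([0,T_0],\Dom(A))$ whose defects vanish in $C([0,T_0],E)$; no regularization of $W_A$ and no H\"older bootstrap are needed. If you wish to keep your route you must either complete the bootstrap (including the behaviour at $t=0$) or replace the smoothing claim by an appeal to this strong-solution characterization. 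The remainder of your argument, in particular the dissipativity computation (your pairing of $y_1=Y_{n,k}+W_A^{(k)}$ with $y_2=W_A^{(k)}$ is algebraically equivalent to the paper's splitting $F(y+w_A)=[F(y+w_A)-F(w_A)]+F(w_A)$) and the passage to the limit, matches the paper.
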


\begin{proof}
We prove the statements for a fixed large $n\in\N$ and $x\in E$. By Hypotheses \ref{EU2}\eqref{EU2.5}, the trajectories of the process $\{W_A(t)\}_{t\geq 0}$ are continuous $\qc$
In this proof we work pathwise, so we will denote by $w_A(\cdot)$ a fixed trajectory of $\{W_A(t)\}_{t\geq 0}$. We fix $T>0$ and we consider the equation
\begin{gather}\label{eqauxappdet}
\eqsys{
\dfrac{dy_n}{dt}(t,x)=Ay_{n}(t,x)+F(y_{n}(t,x)+w_{A}(t)), & t\in [0,T];\\
y_{n}(0,x)=n R(n,A)x,
}
\end{gather}
and the operator
\[
V(y)(t):=e^{tA}n R(n,A)x+\int^t_0e^{(t-s)A}F(y(s)+w_A(s))ds,\quad y\in C([0,T],E),\; t\in [0,T].
\]
Let $R>M_0\norm{x}_E\sup_{t\in [0,T]}e^{t\eta_0}$. By \eqref{anlitic1}, \eqref{anlitic4}, Remark \ref{rmketa} and the local lipshitzianity of $F$, for any $y,z\in  C([0,T],E)$ such that $\norm{y}_{C([0,T],E)}, \norm{z}_{C([0,T],E)}\leq R$ , we have
\[
\norm{V(y)}\leq M_0\norm{x}_E\sup_{t\in [0,T]}e^{t\eta_0}+M_0\sup_{t\in [0,T]}\norm{F(y(t)+w_A(t))}_E\sup_{t\in [0,T]}\int^t_0e^{(t-s)\eta_0}ds
\]
\[
\norm{V(y)-V(z)}_{C([0,T],E)}\leq L_R M_0\norm{y-z}_{C([0,T],E)}\sup_{t\in [0,T]}\int^t_0e^{(t-s)\eta_0}ds.
\]
where $M_0$ and $\eta_0$ are the constants in Remark \ref{rmketa} and $L_R>0$ is the Lipschitz constant of $F$ on the ball in $C([0,T],E)$ with center $0$ and radius $R$. By Remark \eqref{rmkb} for $T_0\in [0,T]$ small enough $V(B(0,R))\subseteq B(0,R)$ and $V$ is a contraction in $B(0,R)$ where $B(0,R)$ is the ball in $C([0,T_0],E)$ with center $0$ and radius $R$. Hence by the contraction mapping theorem the problem  \eqref{eqauxappdet} has a unique mild solution $y_{n,T_0}(\cdot,x)\in B(0,R)$. To prove that there exists a global solution $y_{n,T}$ of \eqref{eqauxappdet} in $C([0,T],E)$ it is sufficient to prove an estimate for $\norm{y_{n,T_0}(\cdot,x)}_{C([0,T_0],E)}$ independent of $T_0$.
By \cite[Proposition 4.1.8]{LUN1} $y_{n,T_0}(\cdot,x)$ is the strong solution of 
\begin{gather*}
\eqsys{
\dfrac{dv_{n}}{dt}(t,x)=Av_{n}(t,x)+F(y_{n,T_0}(t,x)+w_{A}(t)), & t\in [0,T_0];\\
v_{n}(0,x)=n R(n,A)x,
}
\end{gather*}
namely there exists a sequence $\{y_{n,k,T_0}(\cdot,
x)\}_{k\in\N}\subseteq C^1([0,T_0],E)\cap C([0,T_0],\Dom(A))$ such that
\begin{align}
&\lim_{k\ra +\infty}\norm{y_{n,k,T_0}(\cdot,x)-y_{n,T_0}(\cdot,x)}_{C([0,T_0],E)}=0,\notag\\
&\lim_{k\ra +\infty}\norm{\dfrac{dy_{n,k,T_0}}{dt}(\cdot,x)-Ay_{n,k,T_0}(\cdot,x)-F(y_{n,T_0}(\cdot,x)+w_{A}(\cdot))}_{C([0,T_0],E)}=0\label{coco2}.
\end{align}
For any $t\in [0,T_0]$, $x\in E$ and $n,k\in\N$ we set
\begin{equation*}
o_{n,k,T_0}(t,x)=\dfrac{dy_{n,k,T_0}}{dt}(t,x)-Ay_{n,k,T_0}(t,x)-F(y_{n,k,T_0}(t,x)+w_{A}(t)),
\end{equation*}
hence we have
\begin{align*}
\norm{o_{n,k,T_0}(t,x)}_E&\leq \norm{\dfrac{dy_{n,k,T_0}}{dt}(t,x)-Ay_{n,k,T_0}(t,x)-F(y_{n,T_0}(t,x)+w_{A}(t))}_E
\\&+\norm{F(y_{n,T_0}(t,x)+w_{A}(t))-F(y_{n,k,T_0}(t,x)+w_{A}(t))}_E\\
&\leq \norm{\dfrac{dy_{n,k,T_0}}{dt}(t,x)-Ay_{n,k,T_0}(t,x)-F(y_{n,T_0}(t,x)+w_{A}(t))}_E\\
&+L_R\norm{y_{n,T_0}(t,x)-y_{n,k,T_0}(t,x)}_E,
\end{align*}
and so, by \eqref{coco2}, for any large $n\in\N$ we obtain 
\begin{align*}
\lim_{k\ra +\infty}\norm{o_{n,k,T_0}(x)}_{C([0,T_0],E)}=0,\quad\; \qc
\end{align*}

Let $x\in E$, $p\geq 1$, $k,n\in\N$ and $t\in [0,T_0]$. By \eqref{Ldiff}-\eqref{opiccolo} and Hypotheses \ref{EU2}\eqref{EU2.4}, there exists $y^*\in\partial \norm{y_{n,k}(t,x)}_E$, such that
\begin{align}
\frac{1}{p}\dfrac{d^-\norm{y_{n,k,T_0}(t,x)}^p_E}{dt}&\leq\norm{y_{n,k,T_0}(t,x)}_E^{p-1}\dscal{Ay_{n,k,T_0}(t,x)}{y^*}{E}\notag\\
&+\norm{y_{n,k,T_0}(t,x)}_E^{p-1}\dscal{F(y_{n,k,T_0}(t,x)+w_A(t))}{y^*}{E}\notag\\
&+\norm{y_{n,k,T_0}(t,x)}_E^{p-1}\dscal{o_{n,k,T_0}(t,x)}{y^*}{E}\notag\\
&=\norm{y_{n,k,T_0}(t,x)}_E^{p-1}\dscal{Ay_{n,k,T_0}(t,x)}{y^*}{E}\notag\\
&+\norm{y_{n,k,T_0}(t,x)}_E^{p-1}\dscal{F(y_{n,k,T_0}(t,x)+w_{A}(t))-F(w_A(t))}{y^*}{E}\notag\\
&+\norm{y_{n,k,T_0}(t,x)}_E^{p-1}\dscal{F(w_A(t))}{y^*}{E}\notag\\
&+\norm{y_{n,k,T_0}(t,x)}_E^{p-1}\dscal{o_{n,k,T_0}(t,x)}{y^*}{E}\notag\\
&\leq \zeta\norm{y_{n,k,T_0}(t,x)}^p_E+\norm{y_{n,k,T_0}(t,x)}^{p-1}_E\left(\norm{F(w_A(t))}_E+\norm{o_{n,k,T_0}(t,x)}_E\right)\label{bhbh}.
\end{align}
We claim that there exists $C_1:=C_1(\zeta,p)$ such that
\begin{align}\label{C1}
&\frac{1}{p}\dfrac{d^-\norm{y_{n,k,T_0}(t,x)}^p_E}{dt}\leq C_1\norm{y_{n,k,T_0}(t,x)}^p_E+\frac{1}{p}\left(\norm{F(w_A(t))}_E+\norm{o_{n,k,T_0}(t,x)}_E\right)^p.
\end{align}
Indeed for $p=1$, \eqref{C1} is verified with $C_1=\zeta$, instead, for $p>1$, applying \eqref{youngyoung} in \eqref{bhbh} with $a=\norm{y_{n,k,T_0}(t,x)}^{p-1}_E$, $b=\left(\norm{F(w_A(t))}_E+\norm{o_{n,k,T_0}(t,x)}_E\right)$, $q=p$ and $\epsilon=1$ we obtain
\begin{align*}
&\frac{1}{p}\dfrac{d^-\norm{y_{n,k,T_0}(t,x)}^p_E}{dt}\leq (\zeta+\frac{p-1}{p})\norm{y_{n,k,T_0}(t,x)}^p_E+\frac{1}{p}\left(\norm{F(w_A(t))}_E+\norm{o_{n,k,T_0}(t,x)}_E\right)^p,
\end{align*}
and so \eqref{C1} is verified with $C_1=\zeta+\frac{p-1}{p}$. By \eqref{varofcost}, \eqref{eta0}, Remark \ref{rmketa} and \eqref{C1} we get 
\begin{equation*}
\norm{y_{n,k,T_0}(t,x)}^{p}_E\leq e^{pC_1 t}\norm{x}_E^p+\int_0^te^{pC_1 (t-s)}(\norm{F(w_A(t))}_E+\norm{o_{n,k,T_0}(t,x)}_E)^pds,
\end{equation*}
and letting $k\ra+\infty$, by \eqref{coco1},				
\begin{equation}\label{preep}
\norm{y_{n,T_0}(t,x)}^{p}_E\leq e^{pC_1 t}\norm{x}_E^p+\int_0^te^{pC_1 (t-s)}\norm{F(w_A(t))}_E^pds.
\end{equation}
By Remark \ref{momconvu} and recalling that $T_0\in [0,T]$, for any $t>0$ we obtain 
\begin{equation}\label{preep1}
\norm{y_{n,T_0}(t,x)}^{p}_E\leq \norm{x}_E^p+\frac{1}{pC_1}(e^{pC_1t}-1)\sup_{t\in [0,T]}\norm{F(w_A(t))}_E^p.
\end{equation}
and so there exists a global solution $y_{n,T}$ of \eqref{eqauxappdet} in $C([0,T],E)$. The uniqueness of $y_{n,T}$ follows immediately by \eqref{preep1}, the local lipschitzianity and the Gronwall inequalities of $F$. We have proved that, for any $T>0$ the equation \eqref{eqauxapp}, has unique mild solution $y_{n,T}\in C([0,T],E)$. We consider the continuous function $y_n(\cdot,x):[0,+\infty)\ra E$ defined by
\[
y_n(\cdot,x)_{|[0,T]}=y_{n,T}(\cdot,x),\quad \forall\; T>0.
\]
Exploiting \cite[Proposition 4.1.8]{LUN1} (as we have already do for $y_{n,T_0}$) for any $T>0$, there exists a sequence  $\{y_{n,k,T_0}(\cdot,
x)\}_{k\in\N}\subseteq C^1([0,T],E)\cap C([0,T],\Dom(A))$ such that 
\begin{align*}
\lim_{k\ra +\infty}\norm{y_{n,k}(\cdot,x)-y_{n}(\cdot,x)}_{C([0,T],E)}=0,\quad \lim_{k\ra +\infty}\norm{o_{n,k}(x)}_{C([0,T],E)}=0,\quad\forall\; \qc
\end{align*}
where
\begin{equation*}
o_{n,k}(t,x)=\dfrac{dy_{n,k}}{dt}(t,x)-Ay_{n,k}(t,x)-F(y_{n,k}(t,x)+w_{A}(t)),\quad \qc
\end{equation*}
Moreover $y_n(\cdot,x)$ verifies \eqref{preep}, for any $p\geq 1$ and $t>0$. The process $\mi{Y_n}$ whose trajectories are the functions $y(\cdot,x)$ verifies the statements of the proposition. Uniqueness follows by \eqref{preep}, local lipschitzianity of $F$ and the Gronwall inequality. Estimates \eqref{stindXn} follows in exactly the same way as \eqref{stindEn} using the inner product of $\X$ instead of the duality product of $E$ and $E^*$. 
\end{proof}

\begin{rmk}
If $e^{tA}$ is strongly continuous also on $E$, then it is possible to replace the initial datum $nR(n,A)x$ by $x$, in \eqref{eqFOn}.
\end{rmk}


By Remark \ref{momconvu}, \eqref{Nina} (with $h_1=X_n(t,x)$, $h_2=W_A(t)$ and $r=p$) and Proposition \ref{solapp} we obtain immediately the following result.

\begin{prop}\label{proyos}
Assume that Hypotheses \ref{EU2} hold true. For any large $n\in\N$ and $x\in E$ the process $\mi{X_n}$, defined in \eqref{processon}, is the unique mild solution of \eqref{eqFOn} in $C_p([0,T],E)$, for any $p\geq 1$ and $T>0$. In addition, for any $p\geq 1$, large $n\in\N$, $x\in E$ and $t>0$, we have
 \begin{align}
&\norm{X_n(t,x)}^p\leq  C'_p\left(e^{\kappa_p t}\norm{x}^p+\int_0^te^{\kappa_p (t-s)}\norm{F(W_A(s))}^p ds +\norm{W_A(t)}^p\right),\;\qc\label{stindXzn}\\
&\norm{X_n(t,x)}_E^p
\leq  C'_p\left(e^{\kappa_p t}\norm{x}^p_E+\int_0^te^{\kappa_p (t-s)}\norm{F(W_A(s))}^p_Eds+\norm{W_A(t)}_E^p\right),\;\qc\label{stindEzn}
\end{align}
where $C'_{p}:=\max\left(2^{p-1}C_p,2^{p-1}\right)$, and $C_p$, $\kappa_{p}$ are the constants of Proposition \ref{solapp}.
\end{prop}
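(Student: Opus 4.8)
The plan is to read off both assertions directly from Proposition~\ref{solapp}, combined with the elementary inequality~\eqref{Nina} and the decomposition~\eqref{processon}. For the claim that $\mi{X_n}$ is the unique mild solution of~\eqref{eqFOn} in $C_p([0,T],E)$, I would invoke the implication already recorded immediately before Proposition~\ref{solapp}: since that proposition guarantees that~\eqref{eqauxapp} has a unique mild solution $\mi{Y_n}\in C_p([0,T],E)$ for every $p\geq 1$ and $T>0$, the process $X_n(t,x)=Y_n(t,x)+W_A(t)$ solves~\eqref{eqFOn}, and uniqueness transfers because any mild solution of~\eqref{eqFOn} yields, through~\eqref{processon}, a mild solution of~\eqref{eqauxapp}, which must then coincide with $\mi{Y_n}$. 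That $\mi{X_n}$ belongs to $C_p([0,T],E)$ is guaranteed because $\mi{Y_n}\in C_p([0,T],E)$ by Proposition~\ref{solapp}, while $\{W_A(t)\}_{t\in[0,T]}$ has finite $p$-th sup-moment in $E$ by Remark~\ref{momconvu}, so the sum has finite $C_p([0,T],E)$-norm.

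For the two moment bounds the idea is to isolate $X_n$ from $Y_n$ by reversing~\eqref{processon}. Applying~\eqref{Nina} in $\X$ with $h_1=X_n(t,x)$, $h_2=W_A(t)$ and $r=p$, and using $X_n(t,x)-W_A(t)=Y_n(t,x)$, I obtain
\[
\norm{Y_n(t,x)}^p\geq 2^{1-p}\norm{X_n(t,x)}^p-\norm{W_A(t)}^p,\quad\qc
\]
which rearranges to $\norm{X_n(t,x)}^p\leq 2^{p-1}\big(\norm{Y_n(t,x)}^p+\norm{W_A(t)}^p\big)$. Substituting estimate~\eqref{stindXn} for $\norm{Y_n(t,x)}^p$ and absorbing the prefactors $2^{p-1}C_p$ and $2^{p-1}$ into the single constant $C'_p=\max(2^{p-1}C_p,2^{p-1})$ yields~\eqref{stindXzn}. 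Running the identical computation with~\eqref{Nina} applied in the Banach space $E$ and the bound~\eqref{stindEn} in place of~\eqref{stindXn} produces~\eqref{stindEzn}, with the same constant $C'_p$ and the same exponent $\kappa_p$.

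Since each step is either a direct citation of Proposition~\ref{solapp} or an instance of the elementary inequality~\eqref{Nina}, there is no genuine obstacle here; this is exactly why the statement is asserted to follow ``immediately''. The only point deserving a line of care is the bookkeeping of constants, namely checking that one and the same $C'_p$ serves for both the $\X$- and the $E$-valued estimates, which is precisely what the definition $C'_p=\max(2^{p-1}C_p,2^{p-1})$ arranges.
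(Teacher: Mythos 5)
Your proposal is correct and coincides with the paper's argument: the paper gives no separate proof, asserting that the result follows immediately from Remark \ref{momconvu}, inequality \eqref{Nina} applied with $h_1=X_n(t,x)$, $h_2=W_A(t)$, $r=p$, and Proposition \ref{solapp}, together with the reduction to \eqref{eqauxapp} recorded just before that proposition. Your rearrangement of \eqref{Nina}, the substitution of \eqref{stindXn}--\eqref{stindEn}, and the bookkeeping yielding $C'_p=\max(2^{p-1}C_p,2^{p-1})$ are exactly the intended steps.
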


Now we prove a convergence result for $\mi{X_n}$.

\begin{thm}\label{mcyos}
Assume that Hypotheses \ref{EU2} hold true. For any  $x\in E$, there exists\\ $\mi{X}\in C_p((0,T],E)\cap C_p([0,T],\X)$, for any $p\geq 1$ and $T>0$, such that
\begin{equation}\label{convpuntyos}
\lim_{n\ra+\infty}\norm{X_n(\cdot,x)-X(\cdot,x)}_{C([0,T],\X)}=0,\quad \forall\; T>0,\;\qc,
\end{equation}
\begin{equation}\label{covergenzayosE}
\lim_{n\ra\infty}\norm{X_{n}(\cdot,x)-X(\cdot,x)}_{C([\epsilon,T],E)}=0,\quad \forall\; 0<\epsilon\leq T,\; \qc
\end{equation}
For any $p\geq 1$, let $C'_{p}$ be the constant of Proposition \ref{proyos} and let $\kappa_{p}$ be the constants of Proposition \ref{solapp}. For any $p\geq 1$, $x\in E$ and $t>0$, we have
 \begin{align}
&\norm{X(t,x)}^p\leq  C'_p\left(e^{\kappa_p t}\norm{x}^p+\int_0^te^{
\kappa_p (t-s)}\norm{F(W_A(s))}^p ds +\norm{W_A(t)}^p\right),\;\qc\label{stindXz}\\
&\norm{X(t,x)}_E^p
\leq  C'_p\left(e^{\kappa_p t}\norm{x}^p_E+\int_0^te^{\kappa_p (t-s)}\norm{F(W_A(s))}^p_Eds+\norm{W_A(t)}_E^p\right),\;\qc\label{stindEz}
\end{align}
Moreover there exists a constant $\eta\in\R$ such that, for any $x,y\in E$ and $t>0$, we have 
\begin{align}
&\norm{X(t,x)-X(t,y)}\leq e^{\eta t}\norm{x-y},\quad\qc,\label{lipdete}
\end{align}
\begin{align}\label{lipdeteE}
\norm{X(t,x)-X(t,y)}_E\leq e^{\eta t}\norm{x-y}_E,\quad\qc
\end{align}
\end{thm}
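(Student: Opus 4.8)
The plan is to work pathwise and to lean entirely on the approximating processes of Proposition \ref{proyos}. Since $X_n(t,x)=Y_n(t,x)+W_A(t)$, for two indices the noise part cancels, so $X_n(t,x)-X_m(t,x)=Y_n(t,x)-Y_m(t,x)$ and $X_n(t,x)-X_n(t,y)=Y_n(t,x)-Y_n(t,y)$. Fixing a trajectory $w_A(\cdot)$, the paths $y_n(\cdot,x)$ and $y_m(\cdot,x)$ solve the \emph{same} deterministic equation $\dot y=Ay+F(y+w_A)$, the only difference being the initial data $nR(n,A)x$ and $mR(m,A)x$. Writing $D=y_n-y_m$ one gets $\dot D=AD+\big(F(y_n+w_A)-F(y_m+w_A)\big)$, and since the $w_A$–terms cancel we have $A(u-v)=AD$ for $u=y_n+w_A$, $v=y_m+w_A$, with $u-v=D$. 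This is the structure that lets the joint dissipativity act directly on the difference.

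First I would prove the $\X$–convergence \eqref{convpuntyos}. Exactly as in the proof of Proposition \ref{solapp}, I would regularise $D$ by the sequences of $C^1([0,T],\X)\cap C([0,T],\Dom(A))$ functions produced there, use the left–derivative formula \eqref{Ldiff} together with the dissipativity of $A+F-\zeta\Id$ in $\X$ (Hypotheses \ref{EU2}\eqref{EU2.4}) to select, for each $t$, a $z^*\in\partial\norm{D(t)}$ with $\dscal{AD(t)+F(u(t))-F(v(t))}{z^*}{}\leq\zeta\norm{D(t)}$, and then pass to the limit in the regularisation, obtaining $\tfrac{d^-}{dt}\norm{D(t)}\leq\zeta\norm{D(t)}$. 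The variation–of–constants inequality \eqref{varofcost} with $b=\zeta$, $g=0$ then yields the pathwise bound
\[
\norm{X_n(t,x)-X_m(t,x)}\leq e^{\zeta t}\,\norm{nR(n,A)x-mR(m,A)x},\qquad t\geq0,
\]
whose right–hand side is \emph{deterministic} and tends to $0$ by \eqref{anlitic3}, because $\overline{\Dom(A)}=\X$. Hence $\{X_n(\cdot,x)\}_n$ is Cauchy in $C([0,T],\X)$ (uniformly, since $\sup_{[0,T]}e^{\zeta t}<\infty$); I call $X(\cdot,x)$ its limit, which lies in $C_p([0,T],\X)$ by the moment bound \eqref{stindXzn}.

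The moment estimate \eqref{stindXz} then follows by passing to the limit in \eqref{stindXzn}, whose right–hand side is independent of $n$ while $\norm{X_n(t,x)}\to\norm{X(t,x)}$; similarly \eqref{stindEz} comes from \eqref{stindEzn} once the $E$–convergence is in hand. For the $\X$–Lipschitz bound \eqref{lipdete} I would run the same dissipativity argument comparing $y_n(\cdot,x)$ and $y_n(\cdot,y)$, which gives $\norm{X_n(t,x)-X_n(t,y)}\leq e^{\zeta t}\norm{nR(n,A)(x-y)}$, and then let $n\to\infty$ using \eqref{anlitic3} on both the solutions and the initial data, yielding \eqref{lipdete} with $\eta=\zeta$. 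The $E$–Lipschitz bound \eqref{lipdeteE} is obtained analogously from the dissipativity of $A_E+F_{|E}-\zeta\Id$ in $E$, passing to the limit on each $[\eps,T]$ via the $E$–convergence \eqref{covergenzayosE}.

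The delicate point — and the one I expect to be the main obstacle — is \eqref{covergenzayosE}. Repeating the dissipativity computation in $E$ (legitimate by \ref{EU2}\eqref{EU2.4} and the $E$–regularisation of Proposition \ref{solapp}) gives, for $0<s_0\leq t$,
\[
\norm{Y_n(t,x)-Y_m(t,x)}_E\leq e^{\zeta(t-s_0)}\,\norm{Y_n(s_0,x)-Y_m(s_0,x)}_E,
\]
so the whole difficulty reduces to the \emph{single} time $s_0=\eps/2$. Here the naive estimate fails: the Yosida data $nR(n,A)x$ need not converge to $x$ in $E$, because $\Dom(A_E)$ is in general not dense in $E$ (the semigroup $e^{tA_E}$ is only analytic, not strongly continuous), which is precisely why \eqref{covergenzayosE} must be stated away from $t=0$. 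To close the gap I would expand $Y_n(s_0,x)-Y_m(s_0,x)$ through the mild formula on $[0,s_0]$: the local Lipschitzianity of $F_{|E}$ on the $E$–balls that (uniformly in $n$) contain $\{Y_n\}$ controls the integral term by $\int_0^{s_0}\!\ldots\norm{Y_n(\sigma)-Y_m(\sigma)}_E\,d\sigma$, to be absorbed by Gronwall, while the initial term $e^{s_0A}\big(nR(n,A)x-mR(m,A)x\big)$ must be estimated \emph{in $E$} out of the already proven \emph{$\X$–smallness} of $nR(n,A)x-mR(m,A)x$. This passage from $\X$–closeness to $E$–closeness at a strictly positive time, via the regularising action of $e^{sA}$ for $s>0$, is the crux: it is where the analytic structure of \ref{EU2}\eqref{EU2.3} is essential and where the restriction to $[\eps,T]$ is unavoidable. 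Once $\norm{Y_n(s_0,x)-Y_m(s_0,x)}_E\to0$ is secured, the displayed $E$–inequality upgrades it to uniform Cauchyness on $[\eps,T]$, the $E$–limit is identified with $X(\cdot,x)$ by uniqueness of the $\X$–limit, and one concludes $X\in C_p((0,T],E)$ together with \eqref{stindEz}.
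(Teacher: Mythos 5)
Your treatment of the $\X$-convergence \eqref{convpuntyos}, of the moment bounds \eqref{stindXz}--\eqref{stindEz}, and of the two Lipschitz estimates matches the paper's argument in substance (the paper differentiates $\norm{z_{n,k}-z_{m,k}}^2$ and applies \eqref{youngyoung}, so it ends up with $\eta=\zeta+\tfrac12$ rather than your $\eta=\zeta$; your direct subdifferential version is equally valid). The gap is in the one step you yourself flag as the crux of \eqref{covergenzayosE}. You propose to control the initial-data term $e^{s_0A}\bigl(nR(n,A)x-mR(m,A)x\bigr)$ \emph{in $E$} starting from the already established $\X$-smallness of $nR(n,A)x-mR(m,A)x$, ``via the regularising action of $e^{sA}$ for $s>0$''. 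That would require an estimate of the form $\norm{e^{s_0A}z}_E\leq c(s_0)\norm{z}$, i.e.\ that $e^{s_0A}$ maps $\X$ into $E$ boundedly. Nothing in Hypotheses \ref{EU2} provides this: item \eqref{EU2.3} only asserts that $e^{tA}$ is analytic on $\X$ and that $e^{tA_E}$ is analytic on $E$ — two separate statements, with no smoothing from the larger space into the smaller one. As written, the crucial term is therefore not controlled, and the argument for \eqref{covergenzayosE} does not close.

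The step can be repaired, and this is what the paper does: commute the resolvents with the semigroup, so that
\[
e^{s_0A}\bigl(nR(n,A)-mR(m,A)\bigr)x=\bigl(nR(n,A)-mR(m,A)\bigr)e^{s_0A}x,
\]
and observe that, since $x\in E$, one has $e^{s_0A}x=e^{s_0A_E}x\in\Dom(A_E)$ by the analyticity of $e^{tA_E}$ \emph{on $E$} (property \eqref{anlitic2} applied to $A_E$). Then \eqref{anlitic3}, applied to $A_E$ in the space $E$, gives $nR(n,A_E)e^{s_0A_E}x\to e^{s_0A_E}x$ in $E$, so the term is small for large $n,m$: no $\X$-to-$E$ smoothing is needed, only the analyticity of the part of $A$ in $E$ evaluated at the $E$-valued datum $x$. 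This is precisely the source of the term $\norm{(nR(n,A)-mR(m,A))e^{tA}x}_E$ in the paper's Gronwall estimate \eqref{prenc}, and it is also the real reason the convergence is only asserted on $[\eps,T]$. With this substitution, your reduction (dissipativity in $E$ from time $s_0=\eps/2$ onwards, plus the mild formula, the uniform $E$-bounds from \eqref{stindEzn} and Gronwall) closes the argument and coincides with the paper's proof.
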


\begin{proof}
As in the proof of Proposition \ref{solapp} we work pathwise, so we denote by $y_{n,k}(\cdot,x)$, $y_n(\cdot,x)$ and $w_A(\cdot)$ fixed trajectories of the process $\mi{Y_{n,k}}$, $\mi{Y_{n}}$ and $\{W_A(t)\}_{t\in [0,T]}$ respectively.
We begin to prove \eqref{convpuntyos} for a fixed $T>0$. Let $x\in E$, $k,n\in\N$, $t\in [0,T]$. We define
\[
z_{n,k}(t,x):=y_{n,k}(t,x)+w_A(t),\quad n,k\in\N.
\]
We stress that $z_{n,k}(t,x)-z_{m,k}(t,x)=y_{n,k}(t,x)-y_{m,k}(t,x)$, for any $n,m\in\N$. For any $n,m\in\N$, by \eqref{opiccolo}, we have
\begin{align*}
\frac{1}{2}\dfrac{d\norm{z_{n,k}(t,x)-z_{m,k}(t,x)}^2}{dt}&\leq\scal{A(z_{n,k}(t,x)-z_{m,k}(t,x))}{z_{n,k}(t,x)-z_{m,k}(t,x)}\\
&+\scal{F(z_{n,k}(t,x))-F(z_{m,k}(t,x))}{z_{n,k}(t,x)-z_{m,k}(t,x)}\\
&+\scal{o_{n,k}(t,x)-o_{m,k}(t,x)}{z_{n,k}(t,x)-z_{m,k}(t,x)},
\end{align*}
by Hypotheses \ref{EU2}\eqref{EU2.4}  we have
\begin{align*}
\phantom{aaa}\frac{1}{2}\dfrac{d\norm{z_{n,k}(t,x)-z_{m,k}(t,x)}^2}{dt}&\leq \zeta\norm{z_{n,k}(t,x)-z_{m,k}(t,x)}^2\\
&+\norm{o_{n,k}(t,x)-o_{m,k}(t,x)}\norm{z_{n,k}(t,x)-z_{m,k}(t,x)}.
\end{align*}
 By \eqref{youngyoung} (with $\epsilon=1$ and $q=2$) we have
\begin{align*}
\frac{1}{2}\dfrac{d\norm{z_{n,k}(t,x)-z_{m,k}(t,x)}^2}{dt}&\leq (\zeta+\frac{1}{2})\norm{z_{n,k}(t,x)-z_{m,k}(t,x)}^2+\frac{1}{2}(\norm{o_{n,k}(t,x)}+\norm{o_{m,k}(t,x)})^2.
\end{align*}
We set $H_1=\zeta+\frac{1}{2}$, by \eqref{varofcost} we obtain
\begin{align*}
\norm{z_{n,k}(t,x)-z_{m,k}(t,x)}^2 &\leq e^{2H_1t}\norm{(nR(n,A)-mR(m,A))x}\\
& +\int^t_0e^{-2H_1(t-s)}(\norm{o_{n,k}(t,x)}+\norm{o_{m,k}(t,x)})^2ds.
\end{align*}
Letting $k\ra+\infty$, by \eqref{coco1} and Remark \ref{rmkb} we have
\begin{align*}
\norm{z_{n}(t,x)-z_{m}(t,x)}^2 &\leq e^{2H_1t}\norm{(nR(n,A)-mR(m,A))x}.
\end{align*}
where 
\[
z_n(t,x)=X_n(t,x)(w)=y_n(t,x)+w_A(t).
\]
By \eqref{anlitic3}, we obtain that, for any $T>0$ and $x\in E$, the sequence $\{z_n(\cdot,x)\}_{n\in\N}$ is a Cauchy sequence in $C([0,T],\X)$ and we denote by $z_T(\cdot,x)\in C_b([0,T],\X)$ its limit. A continuous function $z(\cdot,x):[0,+\infty)\ra \X$ such that
\begin{equation}\label{incollo}
z(\cdot,x)_{|[0,T]}:= z_T(\cdot,x),\quad \forall\; T>0,
\end{equation} 
is well defined. So the process $\mi{X}$, whose trajectories are the functions $z(\cdot,x)$, verifies \eqref{convpuntyos}. \eqref{stindXzn} and \eqref{convpuntyos} yields \eqref{stindXz} and, by Remark \ref{momconvu} and \eqref{stindXz}, we have $\mi{X}\in C_p([0,T],\X)$ for any $p\geq 1$ and $T>0$.

Now we prove \eqref{covergenzayosE} for fixed $\epsilon, T>0$. By \eqref{stindEzn}, for any $x\in E$, there exists $R:=R(x,T)>0$ such that for any large $n\in\N$ and $t\in [\epsilon,T]$ we have
\begin{equation*}
\norm{z_n(t,x)}_E\leq R.
\end{equation*}
Let $L:=L(x,T)>0$ be the Lipschitz constant of $F$ on $B_E(0,R)$. So, for any $x\in E$, $n,m\in\N$ and $t\in [\epsilon,T]$, by \eqref{anlitic2} Remark \ref{rmketa} and the local lipschitzianity of $F$ we have
\begin{align*}
\norm{z_n(t,x)-z_m(t,x)}_E&\leq \norm{(nR(n,A)-mR(m,A))e^{tA}x}_E\\
&+M_0L\int^t_0e^{t\eta_0}\norm{z_n(s,x)-z_m(s,x)}_Eds
\end{align*}
Hence, by the Gronwall inequality, there exists $K_2:=K_2(x,T)>0$ such that
\begin{equation}\label{prenc}
\norm{z_n(t,x)-z_m(t,x)}_E\leq K_2\norm{(nR(n,A)-mR(m,A))e^{tA}x}_E.
\end{equation}
Letting $m,n\ra +\infty$ in \eqref{prenc}, by \eqref{anlitic3} we obtain that, for any $T>0$, $\epsilon>0$ and $x\in E$, the sequence $\{z_n(\cdot,x)\}_{n\in \N}$ is a Cauchy sequence in $C([\epsilon,T],E)$ and, since $E$ is continuously embedded in $\X$, its limit is the same in $C([0,T],\X)$. So the function defined in \eqref{incollo} is continuous from $(0,+\infty)$ to $E$ and the process $\mi{X}$, which verifies \eqref{convpuntyos}, verifies also \eqref{covergenzayosE}. \eqref{stindEzn} and \eqref{covergenzayosE} yields \eqref{stindEz}, and by Remark \ref{momconvu} and \eqref{stindEz}, we have $\mi{X}\in C_p((0,T],E)$.

Now we prove \eqref{lipdete}. Let $T>0$ and $x,y\in E$. For any, $t\in [0,T]$, $k,n\in\N$, by \eqref{opiccolo} we have 
\begin{align*}
&\frac{1}{2}\dfrac{d\norm{z_{k,n}(t,x)-z_{k,n}(t,y)}^2}{dt}\leq \scal{A(z_{k,n}(t,x)-z_{k,n}(t,y))}{z_{k,n}(t,x)-z_{k,n}(t,y)}\\
&+\scal{F(z_{k,n}(t,x)+w_A(t))-F(z_{k,n}(t,y)+w_A(t))}{z_{k,n}(t,x)-y_{k,n}(t,y)}\\
&+\scal{o_{k,n}(t,x)-o_{k,n}(t,y)}{z_{k,n}(t,x)-z_{k,n}(t,y)}
\end{align*}
and by Hypotheses \ref{EU2}\eqref{EU2.4} we obtain
\begin{align*}
\frac{1}{2}\dfrac{d\norm{z_{k,n}(t,x)-z_{k,n}(t,y)}^2}{dt}\leq & \zeta\norm{z_{k,n}(t,x)-z_{k,n}(t,y)}^2
\\
&+\norm{o_{k,n}(t,x)-o_{k,n}(t,y)}\norm{z_{k,n}(t,x)-z_{k,n}(t,y)}.
\end{align*}
 By \eqref{youngyoung}(with $\epsilon=1$ and $q=2$) we have
\begin{equation}\label{L1}
\dfrac{d\norm{z_{k,n}(t,x)-z_{k,n}(t,y)}^2}{dt}\leq 2\eta\norm{z_{k,n}(t,x)-z_{k,n}(t,y)}^2+\frac{1}{2}\norm{o_{k,n}(t,x)-o_{k,n}(t,y)}^2,
\end{equation}
where $\eta=\zeta+\frac{1}{2}$. By \ref{varofcost} and letting $k\ra +\infty$ we obtain
\[
\norm{z_{n}(t,x)-z_{n}(t,y)}^2\leq e^{2\eta t}\norm{x-z}^2.
\]
Taking the square root and letting $n\ra+\infty$, by \eqref{convpuntyos} we obtain
\[
\norm{z(t,x)-z(t,y)}^2\leq e^{2\eta t}\norm{x-y}^2, \quad t\in [0,T],\; x,y\in E
\]
for any $T>0$ and for $\mathbb{P}$-a.a trajectory of $\mi{X}$, so \eqref{lipdete} is verified. Finally \eqref{lipdeteE} follows from \eqref{covergenzayosE} using similar arguments.
\end{proof}

We make some remarks about possible variations of Theorem \ref{mcyos}.

\begin{cor}\label{strongdis}
If the constant $\zeta$ in Hypotheses \ref{EU2}\eqref{EU2.4} is negative, then the constants $\kappa_p$ and $\eta$ are negative.
\end{cor}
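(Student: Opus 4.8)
The plan is to revisit the two places where the exponential rates were produced — the Young-inequality step leading to \eqref{C1} in the proof of Proposition \ref{solapp}, and the analogous step \eqref{L1} in the proof of Theorem \ref{mcyos} — and to observe that in both the free parameter $\epsilon$ in \eqref{youngyoung} was set equal to $1$ purely for convenience. Since $\epsilon>0$ is arbitrary, repeating the computations with $\epsilon$ left general yields constants that can be pushed arbitrarily close to $\zeta$ from above, hence made strictly negative as soon as $\zeta<0$. The key observation is that only the coefficient appearing in the exponential matters for the claim, while the multiplicative prefactors are allowed to deteriorate.

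First I would re-examine $\kappa_p$. Recall that $\kappa_p=pC_1$, where $C_1$ arises from applying \eqref{youngyoung} to \eqref{bhbh} with $a=\norm{y_{n,k,T_0}(t,x)}_E^{p-1}$, $b=\norm{F(w_A(t))}_E+\norm{o_{n,k,T_0}(t,x)}_E$ and $q=p$. Carrying a general $\epsilon$ through this step produces
\[
C_1=\zeta+\frac{(p-1)\,\epsilon^{p/(p-1)}}{p}
\]
for $p>1$ (and $C_1=\zeta$ for $p=1$, where no splitting is needed). Because $\zeta<0$, I would choose $\epsilon$ small enough that $(p-1)\epsilon^{p/(p-1)}/p<|\zeta|$; then $C_1<0$, so $\kappa_p=pC_1<0$. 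The price is a factor $\epsilon^{-p}$ in front of the forcing term, but this is harmlessly absorbed into the constant $C_p$, so that \eqref{stindEn} and \eqref{stindXn} (and consequently \eqref{stindEz}, \eqref{stindXz}) retain their stated form, now with a negative exponent.

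Then I would treat $\eta$ in the same spirit. In \eqref{L1} the value $\eta=\zeta+\tfrac12$ again reflects the choice $\epsilon=1$ (with $q=2$); redoing that estimate with a general $\epsilon$ gives $\eta=\zeta+\epsilon^2/2$, which is negative once $\epsilon^2<-2\zeta$. After the passage $k\to\infty$ the $o_{n,k}$ terms vanish by \eqref{coco1}, so the surviving inequality is $\norm{z_n(t,x)-z_n(t,y)}^2\le e^{2\eta t}\norm{x-y}^2$, and letting $n\to\infty$ via \eqref{convpuntyos} delivers \eqref{lipdete} with $\eta<0$; the argument for \eqref{lipdeteE} in $E$ is identical.

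The only point requiring care — and it is the same in both cases — is to keep clearly in mind what the corollary actually asserts: it constrains the exponential rates $\kappa_p,\eta$ but says nothing about the prefactors $C_p,C'_p$. Once one notices that the enlarged forcing constants coming from small $\epsilon$ affect only those prefactors, there is no genuine obstacle. The ``hard part'' is therefore merely the bookkeeping of verifying that every estimate downstream, from \eqref{stindXz}–\eqref{stindEz} through \eqref{lipdete}–\eqref{lipdeteE}, remains stable under replacing the convenience choice $\epsilon=1$ by a suitable $\epsilon=\epsilon(p,\zeta)$.
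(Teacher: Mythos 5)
Your proposal is correct and follows essentially the same route as the paper: the paper's own (one-line) proof also re-applies \eqref{youngyoung} with a non-unit $\epsilon$ chosen in terms of $\zeta$ so that the constants $C_1$ of \eqref{C1} and $\eta$ of \eqref{L1} become negative. Your version is in fact spelled out more carefully (including the explicit smallness condition on $\epsilon$ and the remark that only the prefactors deteriorate), whereas the paper merely states the choice of $\epsilon$.
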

\begin{proof}
Applying \eqref{youngyoung} with $\epsilon=\zeta$ if $\zeta\in (0,1]$, or with $\epsilon=1/\zeta$ if $\zeta>1$, we obtain that the constants $C_1$ of \eqref{C1} and $\eta$ of \eqref{L1} are negative.
\end{proof}

\begin{rmk}\label{strongnonanalit}
It is possible to require that the part of $A$ in $E$ generates a strongly continuous semigroup instead of an analytic one, in this case we can take $\epsilon=0$ in \eqref{covergenzayosE} and $\mi{X}\in C_p([0,T],E)$, for any $p\geq 1$ and $T>0$. 
\end{rmk} 

Let $x\in E$ and $\mi{X}$ be the process defined in Theorem \ref{mcyos}. Now we prove that it is the unique mild solutions of\eqref{eqFO}.

\begin{thm}\label{solMild}
Assume that Hypotheses \ref{EU2} hold true. For any $x\in E$, the process $\mi{X}$ is the unique mild solution of the SPDE \eqref{eqFO} in  $C_p([0,T],\X)\cap C_p((0,T],E)$, for any $p\geq 1$ and $T>0$. 
\end{thm}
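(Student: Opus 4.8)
The plan is to obtain \eqref{mildF} for the process $\mi{X}$ of Theorem \ref{mcyos} by letting $n\ra+\infty$ in the mild formulation of the approximating problems \eqref{eqFOn}, and then to deduce uniqueness from a pathwise Gronwall argument, exactly in the spirit of the uniqueness part of Proposition \ref{solapp}. By Proposition \ref{proyos}, for every large $n\in\N$ the process $\mi{X_n}$ satisfies, $\qc$,
\[
X_n(t,x)=e^{tA}nR(n,A)x+\int_0^te^{(t-s)A}F(X_n(s,x))\,ds+W_A(t),\qquad t\geq 0.
\]
I would fix $T>0$ and a trajectory and treat the three terms separately. The noise term $W_A(t)$ is independent of $n$. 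For the first term, since $x\in E\subseteq\overline{\Dom(A)}$, property \eqref{anlitic3} gives $nR(n,A)x\ra x$ in $\X$, and combining this with \eqref{anlitic1} one gets $e^{tA}nR(n,A)x\ra e^{tA}x$ in $\X$, uniformly for $t\in[0,T]$.

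The crux is the convergence of the integral term, which I would establish by dominated convergence in $\X$, working pathwise. For each fixed $s\in(0,T]$, \eqref{covergenzayosE} yields $X_n(s,x)\ra X(s,x)$ in $E$; since $F_{|E}:E\ra\X$ is continuous (Remark \ref{rmkb}) and $e^{(t-s)A}\in\mathcal L(\X)$, the integrand $e^{(t-s)A}F(X_n(s,x))$ converges to $e^{(t-s)A}F(X(s,x))$ in $\X$ for a.e. $s$. For the domination, estimate \eqref{stindEzn} shows $\sup_{n}\sup_{s\in[0,T]}\norm{X_n(s,x)}_E<+\infty$ pathwise, so by Remark \ref{rmkb} (which says $F$ maps bounded sets of $E$ into bounded sets of $\X$) the norms $\norm{F(X_n(s,x))}$ are bounded uniformly in $n$ and $s$; the integrand is then dominated by a constant multiple of $\norm{e^{(t-s)A}}_{\mathcal L(\X)}$, which is integrable on $[0,T]$ by \eqref{anlitic1}. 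Passing to the limit gives $\int_0^te^{(t-s)A}F(X_n(s,x))\,ds\ra\int_0^te^{(t-s)A}F(X(s,x))\,ds$ in $\X$, and the same bound (now via \eqref{stindEz}) shows the limiting integral is well defined. Collecting the three limits yields \eqref{mildF}, while the regularity $\mi{X}\in C_p([0,T],\X)\cap C_p((0,T],E)$ was already established in Theorem \ref{mcyos}.

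For uniqueness, let $\mi{Y}$ be another mild solution in the same class and work pathwise with a fixed trajectory $w_A$ of $\{W_A(t)\}_{t\geq 0}$. Setting $u:=X(\cdot,x)-w_A$ and $v:=Y(\cdot,x)-w_A$, both solve the deterministic mild equation
\[
w(t)=e^{tA}x+\int_0^te^{(t-s)A}F(w(s)+w_A(s))\,ds,\qquad t\in[0,T],
\]
with $w\in C((0,T],E)\cap C([0,T],\X)$. Since $u,v\in C_p((0,T],E)$ they are, pathwise, bounded in $E$ on $(0,T]$; letting $L$ be the Lipschitz constant of $F$ on a ball of $E$ containing the (bounded) trajectories $u(s)+w_A(s)$ and $v(s)+w_A(s)$, I would subtract the two mild equations and take $E$-norms. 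Using that $u-v=(u+w_A)-(v+w_A)$ and the analyticity of $e^{tA}$ on $E$, this gives
\[
\norm{u(t)-v(t)}_E\leq c_T L\int_0^t\norm{u(s)-v(s)}_E\,ds,\qquad t\in(0,T],
\]
where $c_T:=\sup_{r\in[0,T]}\norm{e^{rA}}_{\mathcal L(E)}<+\infty$. Because $s\mapsto\norm{u(s)-v(s)}_E$ is bounded on $(0,T]$, the Gronwall inequality forces $u\equiv v$ on $(0,T]$, and continuity in $\X$ extends this to $t=0$. Hence $X(\cdot,x)=Y(\cdot,x)$ for $\mathbb{P}$-a.a. trajectory, i.e. $\mi{Y}$ is a version of $\mi{X}$.

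The main obstacle is the integral term near $s=0$: there only $\X$-convergence of $X_n(\cdot,x)$ is available (the initial datum $nR(n,A)x$ need not converge in $E$), so one cannot pass to the limit in $E$. The argument instead relies on the uniform-in-$n$ $E$-bound \eqref{stindEzn} together with the mapping property of $F$ from Remark \ref{rmkb} to dominate the integrand in $\X$. The same uniform $E$-bound is what makes the Gronwall step in the uniqueness part integrable across $s=0$, despite the solutions being continuous into $E$ only on $(0,T]$.
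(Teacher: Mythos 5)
Your proposal is correct and follows essentially the same route as the paper: existence is obtained by passing to the limit in the mild formulation \eqref{pingpong2} of the approximating problems, using \eqref{anlitic3} for the initial datum, the pointwise $E$-convergence \eqref{covergenzayosE} together with the uniform bound \eqref{stindEzn} and dominated convergence for the integral term; uniqueness follows from the local Lipschitz property of $F$ on $E$ and the Gronwall inequality. The only (harmless) deviations are that you carry out the dominated convergence in the $\X$-norm where the paper works in the $E$-norm, and that you subtract $w_A$ before applying Gronwall, which changes nothing.
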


\begin{proof}
We begin to prove uniqueness. Let $x\in E$ and let\\ $\mi{X_1},\mi{X_2}\in C_p((0,T],E)$, for any $p\geq 1$ and $T>0$, be two mild solution of \eqref{eqFO}. For any $0<t\leq T$, by Remark \ref{rmketa}, we have 
\[
\norm{X_1(t,x)-X_2(t,x)}_E\leq M_0\int^t_0 e^{(t-s)\eta_0}\norm{F(X_1(t,x))-F(X_2(t,x))}_Eds,\quad \qc
\]
Since $\mi{X_1},\mi{X_2}\in C_p((0,T],E)$, with $p\geq 1$, then
\[
\sup_{t\in [0,T]}\norm{X_1(t,x)}_E, \sup_{t\in [0,T]}\norm{X_2(t,x)}_E<+\infty,\quad \qc
\]
so by the local lipschitzianity of $F$, there exists $L:=L(x,T)>0$ such that
\[
\norm{X_1(t,x)-X_2(t,x)}_E\leq M_0L\int^t_0 e^{(t-s)\eta_0}\norm{X_1(t,x)-X_2(t,x)}_Eds,\quad \qc
\]
and by the Gronwall inequality we obtain
\[
X_1(t)=X_2(t),\quad\qc
\]
for any $t\in [0,T]$ and $T>0$, and so we have the uniqueness.

Now we prove that, for any $x\in E$, the process $\mi{X}$ is the mild solution of \eqref{eqFO}. Let $T>0$ and large $n\in\N$. We recall that, for any $t\in [0,T]$, we have 
\[
X_n(t,x):=Y_n(t,x)+W_A(t),\quad \qc
\]
hence, by Proposition \ref{solapp}
\begin{align}\label{pingpong2}
X_n(t,x)=e^{tA}nR(n,A)x+\int^t_0e^{(t-s)A}F(X_n(s,x))ds+W_A(t),\quad \qc
\end{align}
By \eqref{anlitic1}, Remarks \ref{rmkb}-\ref{rmkb}, \eqref{stindEz}, \eqref{covergenzayosE} and the dominated convergence theorem, we have
\[
\lim_{n\ra+\infty}\norm{\int^t_0 e^{(t-s)A}\left( F(X_n(s,x))-F(X(s,x)) \right) ds}_E=0,\quad \qc,
\]
so, letting $n\ra+\infty$ in \eqref{pingpong2}, by \eqref{anlitic3} we have
\[
X(t,x)=e^{tA}x+\int^t_0e^{(t-s)A}F(X(s,x))ds+W_A(t)\quad\qc
\]
for any $t\in [0,T]$ and $T>0$.
\end{proof}

\subsection{Generalized mild solution and transition semigroup}\label{GMS2}

Now we exploit the density of $E$ in $\X$ to define a process $\mi{X}$ for any $x\in\X$.

\begin{prop}\label{limmild}
Assume that Hypotheses \ref{EU2} hold true. For any $x\in\X$ there exists a unique process $\mi{X}\in C_p([0,T],E)$, for any $p\geq 1$ and $T>0$, such that
\begin{align}\label{convpungm}
&\lim_{n\ra+\infty}\norm{X(\cdot,x_n)-X(\cdot,x)}_{C([0,T],\X)}=0,\quad\forall\; T>0,\;\qc,
\end{align}
where $\{x_n\}_{n\in\N}\subseteq E$ converges to $x$ and $\{X(t,x_n)\}$ is the unique mild solution of \eqref{eqFO} with initial datum $x_n$. In addition, for any $p\geq 1$, $x,y\in\X$ and $t>0$,  we have 
{\small\begin{align}
&\norm{X(t,x)}^p\leq  C'_p\left(e^{\kappa_p t}\norm{x}^p+\int_0^te^{\kappa_p (t-s)}\left(\norm{F(W_A(s))}^p+\norm{W_A(s)}^p\right)ds +\norm{W_A(t)}^p\right),\;\qc,\label{stindXzX}\\
&\norm{X(t,x)-X(t,y)}\leq e^{\eta t}\norm{x-y},\quad\qc,\label{lipdeteX}
\end{align}}
where $\kappa_p$ is the constant of Proposition \ref{solapp}, $C'_p$ is the constant of Proposition \ref{proyos} and $\eta$ is the constant of Theorem \ref{mcyos}. Moreover, for any $x\in\X$, $p\geq 1$ and $T>0$ we have
\begin{align}
\lim_{n\ra+\infty}\norm{\{X(t,x_n)\}_{t\geq 0}-\mi{X}}^p_{C_p([0,T],\X)}=0\label{uhuhuh}.
\end{align}
\end{prop}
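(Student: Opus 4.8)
The plan is to exploit the density of $E$ in $\X$ together with the deterministic Lipschitz bound \eqref{lipdete} of Theorem \ref{mcyos}, which is the engine that makes everything work. Fix $x\in\X$ and, using Hypotheses \ref{EU2}\eqref{EU2.1}, choose a sequence $\{x_n\}_{n\in\N}\subseteq E$ with $x_n\ra x$ in $\X$; for each $n$, Theorem \ref{solMild} furnishes the mild solution $\{X(t,x_n)\}_{t\geq 0}$ with initial datum $x_n$. The crucial observation is that by \eqref{lipdete} one has, for all $n,m$ and $t\in[0,T]$,
\[
\norm{X(t,x_n)-X(t,x_m)}\leq e^{\eta t}\norm{x_n-x_m},\qquad \qc
\]
and the right-hand side is \emph{deterministic}. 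Setting $c_T:=\sup_{t\in[0,T]}e^{\eta t}$, this upgrades to the pathwise estimate $\sup_{t\in[0,T]}\norm{X(t,x_n)-X(t,x_m)}\leq c_T\norm{x_n-x_m}$ almost surely, with a bound that does not depend on $\omega$. This single inequality lets me transfer the Cauchy property simultaneously to the pathwise topology of $C([0,T],\X)$ and to the $L^p$-type topology of $C_p([0,T],\X)$.

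First I would construct $\mi{X}$ and establish \eqref{convpungm} and \eqref{uhuhuh}. Since $\{x_n\}$ is Cauchy in $\X$, the displayed bound shows that, almost surely, $\{X(\cdot,x_n)\}_{n\in\N}$ is Cauchy in the Banach space $C([0,T],\X)$; by completeness it converges almost surely to a continuous $\X$-valued limit, which defines the process $\mi{X}$ satisfying \eqref{convpungm}, and a diagonal argument over $T\in\N$ produces a single process valid for all $T>0$. Raising the pathwise bound to the $p$-th power and taking expectations yields
\[
\norm{\{X(t,x_n)\}_{t\geq 0}-\{X(t,x_m)\}_{t\geq 0}}_{C_p([0,T],\X)}^p\leq c_T^p\norm{x_n-x_m}^p,
\]
so $\{X(\cdot,x_n)\}$ is also Cauchy in $C_p([0,T],\X)$, which is complete; its limit coincides almost surely with $\mi{X}$, giving at once \eqref{uhuhuh} and the membership $\mi{X}\in C_p([0,T],\X)$ for all $p\geq1$ and $T>0$.

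Next I would handle uniqueness and the two remaining estimates. If $\{x_n'\}\subseteq E$ is another sequence with $x_n'\ra x$, then $\norm{X(t,x_n)-X(t,x_n')}\leq e^{\eta t}\norm{x_n-x_n'}\ra 0$, so the limit is independent of the approximating sequence, which is exactly the uniqueness in the statement. Estimate \eqref{lipdeteX} follows by taking $x_n\ra x$ and $y_n\ra y$ in $E$, applying \eqref{lipdete} to $X(t,x_n),X(t,y_n)$, and letting $n\ra+\infty$ via \eqref{convpungm}. Finally, \eqref{stindXzX} is obtained by passing to the limit in the bound \eqref{stindXz} written for $x_n$: for fixed $t$ the left-hand side converges almost surely to $\norm{X(t,x)}^p$, while on the right $\norm{x_n}\ra\norm{x}$ and all other terms are independent of the initial datum; the extra nonnegative term $\norm{W_A(s)}^p$ appearing in \eqref{stindXzX} only weakens the inequality, so it holds a fortiori.

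The point requiring the most care is that, unlike the case $x\in E$ of Theorem \ref{mcyos}, the convergence cannot be upgraded to the $E$-norm: the $E$-valued estimates \eqref{lipdeteE} and \eqref{stindEz} both involve $\norm{x_n}_E$, which may diverge when $x\notin E$, so $\{x_n\}$ need not be Cauchy in $E$ and no uniform $E$-bound is available. This is precisely why the object produced is only a \emph{generalized} mild solution living in $\X$, and why the deterministic character of the $\X$-bound \eqref{lipdete}—independent of $\omega$ and of the $E$-norm of the data—is indispensable for running the Cauchy argument in both the pathwise and the $C_p$ topologies at the same time.
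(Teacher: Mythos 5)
Your proposal is correct and follows essentially the same route as the paper: both arguments hinge on the deterministic Lipschitz bound \eqref{lipdete}, which makes $\{X(\cdot,x_n)\}$ Cauchy pathwise in $C([0,T],\X)$ and, after raising to the $p$-th power and taking expectations, Cauchy in $C_p([0,T],\X)$, after which all the estimates pass to the limit. The only difference is presentational: the paper obtains \eqref{uhuhuh} as a consequence of \eqref{lipdeteX} while you get it directly from the $C_p$-Cauchy property, and you spell out the uniqueness (independence of the approximating sequence) and the a fortiori role of the extra $\norm{W_A(s)}^p$ term, which the paper leaves implicit.
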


\begin{proof}
\noindent Since $E$ is dense in $\X$, for any $x\in\X$ there exists a sequence $\{x_n\}_{n\in \N}\subseteq E$ such that
\[
\lim_{m\ra+\infty} \norm{x_m-x}=0.
\]
We consider the sequence of mild solutions $\{\{X(t,x_m)\}_{t\in [0,T]}\}_{m\in\N}\subseteq C_p([0,T],\X)$, for any $p\geq 1$ and $T>0$, given by Theorem \ref{solMild}. Hence we have 
\[
\{\{X(t,x_n)\}_{t\geq 0}\}_{n\in\N}\subseteq C([0,T],\X),\quad\qc
\]
Moreover by \eqref{lipdete}, for any $T>0$ and $n_1,n_2\in\N$, we have
\[
\lim_{n_1,n_2\ra+\infty}\norm{X(\cdot,x_{n_1})-X(\cdot,x_{n_2})}_{C([0,T],\X)}=0,\quad\qc
\]
So there exists a unique $\X$-valued continuous process $\mi{X}$ (see Definition \ref{spacep}) that verifies \eqref{convpungm}. By \eqref{convpungm} the process $\mi{X}$ verifies \eqref{stindXzX}, \eqref{lipdeteX} and, by Remark \ref{momconvu},\\ $\mi{X}\in C_p([0,T],\X)$, for any $p\geq 1$ and $T>0$. Finally \eqref{lipdeteX} yields \eqref{uhuhuh}.
\end{proof}

\begin{defi}\label{genmild}
For any $x\in\X$ we call generalized mild solution of \eqref{eqFO} the limit \\$\mi{X}$ of Corollary \ref{limmild}. 
\end{defi}

Until now we have shown that
\begin{enumerate}
\item for any $x\in E$ the SPDE \eqref{eqFO} has a unique mild solution $\mi{X}\in C_p((0,T],E)\cap C_p([0,T],\X)$, for any $p\geq 1$ and for any $T>0$, in the sense of Definition \ref{Mild1};
\item for any $x\in \X$ the SPDE \eqref{eqFO} has a unique generalized mild solution $\mi{X}\in C_p([0,T],\X)$, for any $p\geq 1$ and for any $T>0$, in the sense of Definition \ref{genmild}, in particular if $x\in E$ then the generalized mild solution of \eqref{eqFO} is the mild solution of \eqref{eqFO}.
\end{enumerate}

So we can define the following families of operators.
\begin{defi}\label{defsem}
For every $t>0$ we set
\[
P(t)\varphi(x):=\E[\varphi(X(t,x))]=\int_\Omega\varphi(X(t,x)(\omega))\mathbb{P}(d\omega)\quad \varphi\in B_b(\X), x\in \X,
\]
where $\mi{X}$ is the unique generalized mild solution of \eqref{eqFO}.
Similarly we set  
\[
P^E(t)\varphi(x):=\E[\varphi(X(t,x))]=\int_\Omega\varphi(X(t,x)(\omega))\mathbb{P}(d\omega)\quad \varphi\in B_b(E), x\in E,
\]
where $\mi{X}$ is the unique mild solution of \eqref{eqFO}.
\end{defi}

By the same arguments of \cite{DA-ZA4}[Proposition 9.14 and Corollary 9.15] and taking into account \eqref{lipdeteE} and \eqref{lipdeteX}, we have the following result.

\begin{prop}\label{dimsemi}
$\{P(t)\}_{t\geq 0}$ and $\{P^E(t)\}_{t\geq 0}$ are two contraction, positive and Feller semigroups on $B_b(X)$ and $B_b(E)$ respectively.
\end{prop}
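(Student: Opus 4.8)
The plan is to verify directly from the definitions that each of the listed properties holds for both semigroups, treating $P(t)$ and $P^E(t)$ in parallel since they differ only in the domain ($\X$ versus $E$) over which the initial datum ranges. First I would establish the \emph{semigroup property}. For $x\in\X$ and $s,t\geq 0$, the generalized mild solution satisfies a cocycle relation $X(t+s,x)=X(t,X(s,x))$ $\mathbb{P}$-a.s., which follows by uniqueness of the generalized mild solution (Proposition \ref{limmild}) together with the Markov property of the driving noise and the $\mathcal{F}_s$-measurability of $X(s,x)$; this is exactly the content invoked in \cite{DA-ZA4}[Proposition 9.14]. Granting this, the tower property of conditional expectation gives
\begin{align*}
P(t+s)\varphi(x)=\E[\varphi(X(t+s,x))]=\E\big[\E[\varphi(X(t,X(s,x)))\mid\mathcal{F}_s]\big]=\E[(P(t)\varphi)(X(s,x))]=P(s)(P(t)\varphi)(x),
\end{align*}
and $P(0)=\Id$ is immediate from $X(0,x)=x$.

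Next come the three qualitative properties, all of which are soft consequences of the defining formula $P(t)\varphi(x)=\E[\varphi(X(t,x))]$. \textbf{Positivity} is trivial: if $\varphi\geq 0$ then $\varphi(X(t,x))\geq 0$ pointwise, so its expectation is non-negative. \textbf{Contractivity} follows since $\abs{\varphi(X(t,x))}\leq\norm{\varphi}_\infty$ pointwise implies $\abs{P(t)\varphi(x)}\leq\norm{\varphi}_\infty$, and taking the supremum over $x$ gives $\norm{P(t)\varphi}_\infty\leq\norm{\varphi}_\infty$. For the \textbf{Feller property}, the key input is the Lipschitz-in-initial-datum estimate: for $P(t)$ on $B_b(\X)$ one uses \eqref{lipdeteX}, which yields $\norm{X(t,x)-X(t,y)}\leq e^{\eta t}\norm{x-y}$ almost surely, while for $P^E(t)$ on $B_b(E)$ one uses the analogous $E$-norm estimate \eqref{lipdeteE}. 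Given $\varphi\in C_b(\X)$ and $x_n\to x$ in $\X$, the estimate forces $X(t,x_n)\to X(t,x)$ in $\X$ almost surely; continuity of $\varphi$ then gives $\varphi(X(t,x_n))\to\varphi(X(t,x))$ a.s., and boundedness of $\varphi$ lets me apply dominated convergence to conclude $P(t)\varphi(x_n)\to P(t)\varphi(x)$, so $P(t)\varphi\in C_b(\X)$. Boundedness of $P(t)\varphi$ is already covered by contractivity.

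The main obstacle, and the only step that is not purely formal, is the cocycle/flow identity $X(t+s,x)=X(t,X(s,x))$ underpinning the semigroup property, because the generalized mild solution is defined through an approximation procedure (Proposition \ref{limmild}) rather than by a pathwise fixed-point equation valid for all $x\in\X$. The honest way to handle this is to prove the identity first for $x\in E$, where $X(\cdot,x)$ is a genuine mild solution satisfying \eqref{mildF} and the standard argument for stochastic evolution equations applies, and then to pass to the limit for general $x\in\X$ by approximating $x$ with $x_n\in E$ and invoking the continuous dependence \eqref{convpungm} together with the joint continuity in \eqref{lipdeteX}. Since all of this is routine once the cited results of \cite{DA-ZA4} are in hand, I would simply appeal to \cite{DA-ZA4}[Proposition 9.14 and Corollary 9.15], noting that the hypotheses there are met thanks to the Lipschitz estimates \eqref{lipdeteE} and \eqref{lipdeteX} established above, and verify the three qualitative properties by the elementary arguments just outlined.
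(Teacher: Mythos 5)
Your proposal is correct and follows essentially the same route as the paper, which simply invokes \cite{DA-ZA4}[Proposition 9.14 and Corollary 9.15] together with the Lipschitz estimates \eqref{lipdeteE} and \eqref{lipdeteX}; you merely spell out the details (cocycle identity via approximation from $E$, positivity, contractivity, and the Feller property via dominated convergence) that the paper leaves to the cited references.
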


\subsection{Existence and Uniqueness of the invariant measure}\label{misinvS}

In this Subsection we are going to prove that the semigroup $P(t)$ has a unique invariant measure $\nu$ verifying some useful properties. To do this we need an additional hypothesis.

\begin{hyp}\label{EU3}
Assume that Hypotheses \ref{EU2} hold true. Moreover we assume that the constant $\zeta$ in Hypotheses \ref{EU2}\eqref{EU2.4} is negative and that
\begin{equation}\label{condinv}
\sup_{t\geq 0}\E[\norm{W_A(t)}^p]<+\infty,\quad\forall\; p\geq 1.
\end{equation}
\end{hyp} 
By Hypotheses \ref{EU2}\eqref{EU2.7} and \ref{EU3}\eqref{condinv} we have
\begin{equation*}
\Sigma_{p,E}:=\sup_{t\geq 0}\E[\norm{F(W_A(t))}_E^p+\norm{W_A(t)}_E^p]<+\infty,\quad\forall\; p\geq 1,
\end{equation*}
and, since $E$ is continuously embedded in $\X$, we have
\begin{equation*}
\Sigma_{p,\X}:=\sup_{t\geq 0}\E[\norm{F(W_A(t))}^p+\norm{W_A(t)}^p]<+\infty,\quad\forall\; p\geq 1.
\end{equation*}
For any $p\geq 1$ we set 
\begin{equation}\label{condinf}
\Sigma_p:=\max\{\Sigma_{p,\X},\Sigma_{p,E}\}.
\end{equation}
Hence by Corollary \ref{strongdis}, \eqref{stindEz} and \eqref{stindXzX} we obtain the following result.

\begin{prop}\label{stiinf}
Assume that Hypotheses \ref{EU3} hold true and let $\mi{X}$ be the generalized mild solution of \eqref{eqFO}. If $x\in \X$ then $\mi{X}\in \X^p([0,\infty))$, for any $p\geq 1$, if $x\in E$ then then $\mi{X}\in E^p([0,\infty))$, for any $p\geq 1$ (see Definition \ref{spacep}). In particular, for any $p\geq 1$, there exists $K_{p}:=K_{p}(\Sigma_{p},C'_p)$ (where $C'_p$ is the constant of Theorem \ref{proyos}), such that
\begin{align}
&\E[\norm{X(t,x)}^p]\leq K_p(1+e^{\kappa_pt}\norm{x}^p),\quad \forall\; t>0,\forall\; x\in\X,\label{stiXinf}\\
&\E[\norm{X(t,x)}_E^p]\leq K_p(1+e^{\kappa_pt}\norm{x}^p_E),\quad \forall\; t>0,\forall\; x\in E.\notag
\end{align}
where $\kappa_p<0$ is the constant of Proposition \ref{solapp} and Corollary \eqref{strongdis}.
\end{prop}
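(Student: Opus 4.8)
The plan is to derive the moment bounds directly from the pathwise estimates already established in Theorem \ref{mcyos} and Proposition \ref{limmild}, combined with the uniform-in-time integrability of the stochastic convolution encoded in the constants $\Sigma_p$. First I would treat the case $x\in\X$ using the generalized-mild-solution estimate \eqref{stindXzX}. Taking expectations of both sides and using the linearity of $\E$, the right-hand side splits into three pieces: the deterministic term $C'_pe^{\kappa_p t}\norm{x}^p$, the convolution integral $\int_0^te^{\kappa_p(t-s)}\E[\norm{F(W_A(s))}^p+\norm{W_A(s)}^p]ds$, and the final term $\E[\norm{W_A(t)}^p]$. Since Hypotheses \ref{EU3} are in force, Corollary \ref{strongdis} gives $\kappa_p<0$, so the kernel $e^{\kappa_p(t-s)}$ is a genuine contraction.

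The key step is to bound the convolution integral uniformly in $t$. By the very definition of $\Sigma_p$ in \eqref{condinf}, we have $\E[\norm{F(W_A(s))}^p+\norm{W_A(s)}^p]\leq\Sigma_p$ for every $s\geq 0$, and likewise $\E[\norm{W_A(t)}^p]\leq\Sigma_p$. Hence
\[
\int_0^te^{\kappa_p(t-s)}\E\!\left[\norm{F(W_A(s))}^p+\norm{W_A(s)}^p\right]ds\leq\Sigma_p\int_0^te^{\kappa_p(t-s)}ds=\frac{\Sigma_p}{\abs{\kappa_p}}\bigl(1-e^{\kappa_p t}\bigr)\leq\frac{\Sigma_p}{\abs{\kappa_p}},
\]
using $\kappa_p<0$. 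Collecting the three contributions yields
\[
\E[\norm{X(t,x)}^p]\leq C'_p\left(e^{\kappa_p t}\norm{x}^p+\frac{\Sigma_p}{\abs{\kappa_p}}+\Sigma_p\right),
\]
and absorbing the constant factors into a single $K_p:=K_p(\Sigma_p,C'_p)$ (for instance $K_p:=C'_p\max\{1,\Sigma_p(1+\abs{\kappa_p}^{-1})\}$) gives exactly \eqref{stiXinf}. The assertion $\mi{X}\in\X^p([0,\infty))$ then follows since the right-hand side is bounded uniformly on $[0,\infty)$ (again because $\kappa_p<0$ makes $e^{\kappa_p t}\norm{x}^p$ bounded), so $\sup_{t\geq 0}\E[\norm{X(t,x)}^p]<+\infty$.

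The case $x\in E$ is entirely parallel: I would start from \eqref{stindEz} instead of \eqref{stindXzX}, use the $E$-norm part of $\Sigma_p$ via $\Sigma_{p,E}\leq\Sigma_p$, and run the identical integral estimate to obtain the second displayed inequality and $\mi{X}\in E^p([0,\infty))$. I expect no genuine obstacle here—the substance of the argument was already carried out in proving the pathwise bounds of Theorem \ref{mcyos}, and the only new ingredient is the observation that the sign of $\kappa_p$ turns the exponential kernel into something with a finite, $t$-independent integral. The one point demanding mild care is bookkeeping the constants so that a single $K_p$ depending only on $\Sigma_p$ and $C'_p$ works simultaneously for the uniform bound and for the explicit estimate; this is purely a matter of taking the maximum of finitely many constants and presents no difficulty.
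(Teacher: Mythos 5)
Your proposal is correct and follows exactly the route the paper intends: the paper's own ``proof'' consists of the single remark that the result follows from Corollary \ref{strongdis}, \eqref{stindEz} and \eqref{stindXzX} together with the definition \eqref{condinf} of $\Sigma_p$, and you have simply written out those details (taking expectations in the pathwise bounds, using $\sup_{s\geq 0}\E[\norm{F(W_A(s))}^p+\norm{W_A(s)}^p]\leq\Sigma_p$, and integrating the kernel $e^{\kappa_p(t-s)}$ with $\kappa_p<0$). No gap; the only cosmetic remark is that your explicit $K_p$ also involves $\abs{\kappa_p}^{-1}$, which is harmless since $\kappa_p$ is itself determined by $\zeta$ and $p$.
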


By \cite[Theorem 2.1]{DY1} and Proposition \ref{dimsemi}, it is possible to associate to the semigroups $P(t)$ and $P^E(t)$ two Markov transition functions, so we can exploit the results contained in \cite[Chapter 11]{DA-ZA4}.

\begin{thm}\label{invX}
Assume that Hypotheses \ref{EU3} hold true. There exists $\nu\in \mathscr{P}(E)$ such that it is the unique invariant measure (see Definition \ref{defiinv})) of both semigroups $P^E(t)$ and $P(t)$. Moreover $\nu(E)=1$ and it verifies the following properties,
\begin{align}\label{momeinvX}
\int_\X\norm{x}^p\nu(dx)<+\infty,\quad\forall\; p\geq 1,
\end{align} 
\begin{align}\label{momeinv}
\int_E\norm{x}_E^p\nu(dx)<+\infty,\quad\forall\; p\geq 1.
\end{align}
Moreover we have 
\begin{align}\label{madiainvX}
\lim_{t\ra +\infty}P(t)\varphi(x)=\int_\X\varphi(y)\nu(dy),\quad \varphi\in C_b(\X),\; x\in \X,
\end{align}
\begin{align}\label{madiainv}
\lim_{t\ra +\infty}P(t)\varphi(x)=\int_E\varphi(y)\nu(dy),\quad \varphi\in C_b(E),\; x\in E.
\end{align}
\end{thm}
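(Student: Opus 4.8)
The plan is to bypass the usual Krylov--Bogoliubov tightness argument and exploit the \emph{strict} dissipativity directly, working in the Wasserstein distance of order one. By Corollary~\ref{strongdis} both $\eta$ and all the $\kappa_p$ are strictly negative, so \eqref{lipdeteX} reads as the pathwise contraction $\norm{X(t,x)-X(t,y)}\le e^{\eta t}\norm{x-y}$ with $\eta<0$, while \eqref{stiXinf} with $\kappa_p<0$ gives the \emph{uniform in time} estimate $\sup_{t\ge0}\E[\norm{X(t,x)}^p]\le K_p(1+\norm{x}^p)$ for every $p\ge1$. Writing $\mu^x_t:=\mathscr{L}(X(t,x))$ for the law of the generalized mild solution and $W_1$ for the Wasserstein-$1$ metric on the Polish space $\X$, every $\mu^x_t$ lies in the complete metric space $(\mathscr{P}_1(\X),W_1)$ of probability measures with finite first moment, since $\int_\X\norm{y}\,\mu^x_t(dy)=\E\norm{X(t,x)}<+\infty$.

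First I would show that the dual semigroup contracts in $W_1$. Given $\mu,\tilde\mu\in\mathscr{P}_1(\X)$, choose an optimal coupling $(\xi,\zeta)$ with $\mathscr{L}(\xi)=\mu$, $\mathscr{L}(\zeta)=\tilde\mu$ and $\E\norm{\xi-\zeta}=W_1(\mu,\tilde\mu)$, independent of $\{W(t)\}_{t\ge0}$; then $(X(t,\xi),X(t,\zeta))$ is a coupling of $P(t)^*\mu$ and $P(t)^*\tilde\mu$, so \eqref{lipdeteX} yields $W_1(P(t)^*\mu,P(t)^*\tilde\mu)\le e^{\eta t}W_1(\mu,\tilde\mu)$. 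Uniqueness is then immediate: if $\nu_1,\nu_2$ are invariant (both in $\mathscr{P}_1(\X)$ by the moment bound) then $W_1(\nu_1,\nu_2)=W_1(P(t)^*\nu_1,P(t)^*\nu_2)\le e^{\eta t}W_1(\nu_1,\nu_2)\to0$, forcing $\nu_1=\nu_2$.

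For existence I would prove that the orbit $\{\mu^x_t\}_{t\ge0}$ is $W_1$-Cauchy as $t\to+\infty$. Using $\mu^x_{t+s}=P(s)^*\mu^x_t$ and $\mu^x_s=P(s)^*\delta_x$ together with the contraction, $W_1(\mu^x_{t+s},\mu^x_s)\le e^{\eta s}W_1(\mu^x_t,\delta_x)=e^{\eta s}\E\norm{X(t,x)-x}$, and the last factor is bounded by $K_1(1+\norm{x})+\norm{x}$ \emph{uniformly in} $t$; as $\eta<0$ this tends to $0$ with $s$, giving the Cauchy property. By completeness there is a limit $\nu$, and $P(s)^*\nu=\lim_t P(s)^*\mu^x_t=\lim_t\mu^x_{t+s}=\nu$ by $W_1$-continuity of $P(s)^*$, so $\nu$ is invariant. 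Running the identical argument in the separable Banach space $E$ with \eqref{lipdeteE} and the $E$-moment bound of Proposition~\ref{stiinf} produces an invariant measure $\nu_E\in\mathscr{P}(E)$ for $P^E(t)$; its pushforward under the continuous embedding $E\hookrightarrow\X$ is invariant for $P(t)$, hence equals $\nu$ by uniqueness, which gives $\nu=\nu_E$ and $\nu(E)=1$.

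The finite moments \eqref{momeinvX}--\eqref{momeinv} then follow from the uniform bounds: $W_1$-convergence implies weak convergence, the $(p+1)$-th moments are uniformly bounded, and weak lower semicontinuity of $y\mapsto\norm{y}^p$ gives $\int_\X\norm{y}^p\,\nu(dy)\le\liminf_t\E[\norm{X(t,x)}^p]\le K_p(1+\norm{x}^p)$, and similarly in $E$. For the convergence to equilibrium, the Cauchy sequence $\mu^x_t$ converges in $W_1$ to an invariant measure, necessarily $\nu$ by uniqueness; since $W_1$-convergence entails weak convergence, $P(t)\varphi(x)=\int_\X\varphi\,d\mu^x_t\to\int_\X\varphi\,d\nu$ for all $\varphi\in C_b(\X)$, proving \eqref{madiainvX}, and \eqref{madiainv} analogously. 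The main point requiring care is the $W_1$-contraction of $P(t)^*$: it rests on the existence of an optimal coupling (available since $\X$ and $E$ are Polish) and on verifying that the synchronous-noise pair $(X(t,\xi),X(t,\zeta))$ truly has marginals $P(t)^*\mu$ and $P(t)^*\tilde\mu$, which uses the independence of the initial data from $W$ and the measurability underlying \eqref{lipdeteX}; a secondary subtlety is the $E\leftrightarrow\X$ transfer, which relies on the fact recorded after Definition~\ref{genmild} that for $x\in E$ the generalized mild solution coincides with the $E$-valued mild solution.
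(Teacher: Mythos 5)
Your argument is correct in substance but follows a genuinely different route from the paper's. The paper constructs $\nu$ by the pullback method of Da Prato--Zabczyk: it extends the noise to negative times through an independent Wiener process, solves the equation from initial time $-s$, and shows that $X(0,-s,x)$ is Cauchy in $L^2((\Omega,\mathbb{P}),E)$ as $s\to+\infty$, so that $\nu$ arises as the law of an actual $E$-valued random variable $\xi$ independent of $x$; invariance, uniqueness, \eqref{madiainvX}--\eqref{madiainv} and $\nu(E)=1$ then follow from the identity $\mathscr{L}(X(t,x))=\mathscr{L}(X(0,-t,x))$, the Feller property and the identification of $\nu$ with $\nu^E$, while the moments \eqref{momeinvX}--\eqref{momeinv} are obtained by the truncation $\norm{y}^p/(1+b\norm{y}^p)$ and monotone convergence. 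You instead work forward in time at the level of laws, turning the strict dissipativity ($\eta<0$ and $\kappa_p<0$ from Corollary \ref{strongdis}, via \eqref{lipdeteX}, \eqref{lipdeteE} and \eqref{stiXinf}) into a $W_1$-contraction of the dual semigroup and a Cauchy property of $t\mapsto\mathscr{L}(X(t,x))$ in $(\mathscr{P}_1,W_1)$. The paper's route buys convergence of the random variables themselves (stronger than convergence of laws) and imports the Da Prato--Zabczyk framework wholesale with no optimal-transport machinery; yours is self-contained at the level of measures and makes the exponential rate of convergence to equilibrium in $W_1$ explicit, which is slightly more quantitative than \eqref{madiainvX}.

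Three points in your sketch need patching, none fatal. First, the synchronous-coupling step you flag can be avoided entirely: by \eqref{lipdeteX} the semigroup $P(t)$ maps $1$-Lipschitz functions into $e^{\eta t}$-Lipschitz functions, so the Kantorovich--Rubinstein dual formulation gives $W_1(P(t)^*\mu,P(t)^*\tilde\mu)\leq e^{\eta t}W_1(\mu,\tilde\mu)$ without constructing measurable couplings with random initial data. Second, your contraction argument only yields uniqueness among invariant measures with finite first moment; to conclude uniqueness among all invariant probability measures (as the theorem asserts) either show that any invariant $\nu_1$ satisfies $\int_\X\norm{y}\nu_1(dy)<+\infty$ by the same truncation-plus-invariance device the paper uses with \eqref{stiXinf}, or integrate the already-established pointwise limit \eqref{madiainvX} against $\nu_1$ and apply dominated convergence. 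Third, completeness of $(\mathscr{P}_1(E),W_1)$ requires $E$ to be a Polish (hence separable) space; this is implicit in the paper's own use of $L^2((\Omega,\mathbb{P}),E)$ and of $E$-valued processes, so it is not a genuine obstruction, but it should be stated.
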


\begin{proof}
Existence and uniqueness of the invariant measures $\nu$ and $\nu^E$ of $P(t)$ and $P^E(t)$ respectively follow by the same arguments of \cite[Theorems 11.33-11.34]{DA-ZA4}. However we write a sketch of the proof because it is useful to know how the invariant measure is constructed to prove \eqref{momeinvX}, \eqref{momeinv}, \eqref{madiainvX} and \eqref{madiainv}. 

Since $P^E(t)$ is Feller, by \cite[Propositions 11.1-11.4 and Remark 11.6]{DA-ZA4}, if there exists $\nu^E\in\mathscr{P}(E)$ such that, for any $x\in E$, $\mathscr{L}(X(t,x))$ narrow (or weak) converges to $\nu^E$, then $\nu^E$ is the unique invariant measure of $P^E(t)$. We recall that $\mathscr{L}(X(t,x))$ narrow (or weak) converges to $\nu^E$ if, for any $\varphi\in C_b(\K)$, we have
\begin{equation}\label{limmisinv}
\lim_{n\ra+\infty}\left|
\int_E \varphi(y)\mathscr{L}(X(t,x))(dy)-\int_E \varphi(y)\nu^E(dy)\right|=0.
\end{equation}
To prove \eqref{limmisinv} we consider the SPDE \eqref{eqFO} but with an arbitrary $s\in\R$ as initial time. Let $\{W'(t)\}_{t\geq 0}$ be another $\X$-valued cylindrical Wiener process independent of $\mii{W}$. For any $t\in\R$ we define the process 
\[
\widehat{W}(t):=\begin{cases}
W(t) & t\geq 0\\
W'(-t) & t< 0.
\end{cases}
\]
For any $s\in \R$ and $x\in\X$, we consider the SPDE
\begin{gather}\label{eqINV}
\eqsys{
dX(t,s,x)=\big(AX(t,s,x)+F(X(t,s,x))\big)dt+Cd\widehat{W}(t),\quad t\geq s\\
X(s,s,x)=x,
}
\end{gather}
We remark that the method used to prove Theorem \ref{solMild} and to define the generalized mild solution (see Corollary \ref{limmild})  also works by replacing the initial time $0$ by an arbitrary $s\in \R$.
Hence, for any $x\in \X$ and $s\in \R$, the SPDE \eqref{eqINV} has a unique generalized mild solution $\{X(t,s,x)\}_{t\geq 0}$. Moreover, as in Proposition \ref{stiinf}, for any $p\geq 1$ we have
\begin{align}
&\E[\norm{X(t,s,x)}_E^p]\leq K_p(1+e^{\kappa_p (t-s)}\norm{x}^p_E),\quad t\geq s, x\in \X\notag\\
&\E[\norm{X(t,s,x)}_E^p]\leq K_p(1+e^{\kappa_p (t-s)}\norm{x}^p_E),\quad t\geq s, x\in E\label{stindEMI}\\
&\E[\norm{X(t,s,x)-X(t,s,z)}]\leq e^{\eta(t-s)}\norm{x-z},\quad t\geq s, x,z\in\X\notag\\
&\E[\norm{X(t,s,x)-X(t,s,z)}_E]\leq e^{\eta(t-s)}\norm{x-z},\quad t\geq s, x,z\in E\label{lipmildI}
\end{align}
where $\kappa_p$ is the constant of Proposition \ref{solapp}, $\eta$ is the constant of Proposition \ref{mcyos} and $K_p$ is the constant of Proposition \ref{stiinf}. By Corollary \ref{strongdis} the constants $\eta$ and $\kappa_p$ are negative.

Now we prove that there exists a random variable $\xi\in L^2((\Omega,\mathbb{P}),E)$, such that, for any $x\in E$, we have
\begin{equation}\label{converinva}
\lim_{s\ra +\infty}\E\left[\norm{X(0,-s,x)-\xi}_E^2\right]=0,
\end{equation}
and after we will prove that the law of $\xi$ is the $\nu^E$ that verifies \eqref{limmisinv}. 
We can assume that $\{X(t,s,x)\}_{t\geq s}$ is a strict solution of \eqref{eqINV}, otherwise we approximate it as in Proposition \ref{solapp}. For $\mathbb{P}$-a.a. $\omega\in\Omega$, for any $x\in E$, $s\in \R$, $t\geq s$ and $h\in [s,t]$, by Hypotheses \ref{EU2}\eqref{EU2.4}, there exist $z^*\in \partial (\norm{X(t,s,x)(\omega)-X(t,h,x)(\omega)}_E)$ such that
\begin{align*}
\frac{1}{2}\frac{d\norm{X(t,s,x)-X(t,h,x)}_E}{dt}&=\dscal{A(X(t,s,x)-X(t,h,x))}{z^*}{E}\\
&+\dscal{F(X(t,s,x))-F(X(t,h,x))}{z^*}{E}\\
&\leq \zeta\norm{X(t,s,x)(\omega)-X(t,h,x)(\omega)}_E.
\end{align*}
By \ref{varofcost}, taking the square root and the expectation we obtain
\begin{align*}
\E\left[\norm{X(t,s,x)-X(t,h,x)}_E^2\right]&\leq e^{4\zeta(t-h)}\E\left[\norm{X(h,s,x)-x}_E^2\right]\notag\\
&\leq 2e^{4\zeta(t-h)}(\E\left[\norm{X(h,s,x)}_E^2\right]+\norm{x}_E^2),
\end{align*}
and so
\begin{equation}\label{precau}
\E\left[\norm{X(t,s,x)-X(t,h,x)}_E^2\right]\leq e^{4\zeta(t-h)}C_x,
\end{equation}
where $C_x:=2\sup_{r\geq s}\left(\E\left[\norm{X(r,s,x)}_E^2\right]\right)+2\norm{x}_E^2$ is finite by \eqref{stindEMI}. For any $x\in\X$, by Hypotheses \ref{EU3} and \eqref{precau}, the family $\{X(0,-t,x)\}_{t\geq 0}$ is Cauchy in $L^2((\Omega,\mathbb{P}),E)$, namely
\[
\lim_{s,t\ra +\infty}\E\left[\norm{X(0,-t,x)-X(0,-s,x)}_E^2\right]=0
\]
Since $L^2((\Omega,\mathbb{P}),E)$ is complete, then $\{X(0,-t,x)\}_{t\geq 0}$ converges in $L^2((\Omega,\mathbb{P}),E)$ and by \eqref{lipmildI} its limit does not depend on $x$, so \eqref{converinva} is verified. Let $\nu^E=\mathscr{L}(\xi)$, where $\xi$ is the random variable that verifies \eqref{converinva}. We prove that it verifies \eqref{limmisinv}. Since $\{W'(t)\}_{t\geq 0}$ and $\{W(t)\}_{t\geq 0}$ are two independent cylindrical Wiener processes; they have the same law, and so, for any $x\in E$ and $t\geq 0$, we have
\[
\mathscr{L}(X(t,x))=\mathscr{L}(X(0,-t,x)).
\]
Let $\varphi\in C_b(E)$. For any $x\in\X$, $t\geq 0$, we have
\begin{align*}
\int_\X\varphi(y)p_t(x,dy)&=\int_\X\varphi(y)\mathscr{L}(X(t,x))(dy)=\int_\X\varphi(y)\mathscr{L}(X(0,-t,x))(dy)\\
&=\int_\Omega\varphi(X(0,-t,x)(\omega))\mathbb{P}(d\omega).
\end{align*}
Since $\varphi\in C_b(E)$, by \eqref{converinva} and the dominated convergence theorem we have
\begin{equation}\label{prenar}
\lim_{t\ra+\infty}\int_\X\varphi(y)p_t(x,dy)=\lim_{t\ra+\infty}\int_\Omega\varphi(X(0,-t,x)(\omega))\mathbb{P}(d\omega)=\int_\Omega\varphi(\xi(\omega))\mathbb{P}(d\omega)=\int_\X\varphi(y)\nu^E(dy),
\end{equation}
hence \eqref{limmisinv} is verified and so the measure $\nu^E$ is the unique invariant measure of the transition semigroup $P^E(t)$. \eqref{madiainv} follows immediately by the definition of transition semigroup $P^E(t)$ and \eqref{prenar}. Now we prove \eqref{momeinv}. For $p\geq 1$ and $b>0$ we have
\[
\int_E \dfrac{\norm{y}_E^{p}}{1+b\norm{y}_E^{p}}p_{t}(x,dy)\leq \int_E \norm{y}_E^{p}p_{t}(x,dy)=\mathbb{E}[\norm{X(t,x)}_E^{p}].
\]
Then, by \eqref{stiXinf}, \eqref{madiainv} and the monotone convergence theorem, we conclude
\[
\int_\X\norm{y}_E^{p}\nu^E(dy)=\lim_{b\rightarrow 0}\lim_{t\rightarrow+\infty}\int_E \dfrac{\norm{y}_E^{p}}{1+b\norm{y}_E^{p}} p_{t}(x,dy)<+\infty.
\]
In the same way, we can prove that the semigroup $P(t)$ has a unique invariant measure $\nu$ that verifies \eqref{momeinvX} and \eqref{madiainvX}. By Hypotheses \ref{EU2}\eqref{EU2.1}, \cite[Lemma 2.1.1]{BOGIE2} and recalling that, for any $f:\X\ra\R$ and $A,B\subset\R$, $f^{-1}(A\cap B)= f^{-1}(A)\cap f^{-1}(B)$, it easy to prove that $B_b(\X)$ is continuously embedded in $B_b(E)$.  We recall that $B_b(E)$ and $B_b(\X)$ generate $\mathcal{B}(E)$ and $\mathcal{B}(\X)$ respectively. Hence $\mathcal{B}(\X)\subseteq \mathcal{B}(E)$ and the measure
\[
\nu'(\Gamma)=\nu^E(\Gamma\cap E),\quad  \Gamma\in\mathcal{B}(\X),
\]
is well defined. Moreover $\nu'$ is invariant for the semigroup $P(t)$, and by uniqueness, $\nu=\nu'$ and the statements of the theorem are verified.
\end{proof}

\begin{rmk}
 Theorem \ref{invX} implies that $\nu(H)=1$, for any $H\subseteq \X$ that verifies Hypotheses \ref{EU3} (see the example of subsection \ref{belapp} ).
\end{rmk}
\begin{rmk}
In some specific settings it is possible to prove Theorem \ref{invX} replacing the condition $\zeta>0$ with some other hypotheses on $F$ (e.g. \cite[Chapter 8]{CER1}).
\end{rmk}

\section{Behavior in $L^2(\X,\nu)$}\label{misinv}
First of all we show that the transition semigroup $P(t)$ is uniquely extendable to a strongly continuous and contraction semigroup in $L^2(\X,\nu)$. By the H\"older inequality and the invariance of $\nu$, for any $\varphi\in B_{b}(\X)$ we have
\begin{equation}\label{pcontra}
\norm{P(t)\varphi}_{L^2(\X,\nu)}^2=\int_\X \abs{P(t)\varphi}^2 d\nu \leq \int_\X P(t)\abs{\varphi}^2d\nu=\int_\X\abs{\varphi}^2d\nu=\norm{\varphi}_{L^2(\X,\nu)}^p.
\end{equation}
We recall that $C_b(\X)$ is dense in $L^2(\X,\nu)$. Observe that if $\{\varphi_n\}_{n\in\N}\subseteq C_b(\X)$ converges to $\varphi$ in $L^2(\X,\nu)$, then for any $t\geq 0$ the sequence $\{P(t)\varphi_n\}_{n\in\N}$ is Cauchy in $ L^2(\X,\nu)$. Indeed, by \eqref{pcontra}, we have
\[
\norm{P(t)\varphi_n-P(t)\varphi_m}_{L^2(\X,\nu)}\leq \norm{\varphi_n-\varphi_m}_{L^2(\X,\nu)}.
\]
Hence the transition semigroup $P(t)$ is uniquely extendable to a strongly continuous and contraction semigroup $P_2(t)$ in $L^2(\X,\nu)$. 

\begin{defi}
We denote by $N_2$ the infinitesimal generator of $P_2(t)$.
\end{defi}

\begin{rmk}
In a similar way, it is possible to prove that the semigroup $P^E(t)$ is uniquely extendable to a strongly continuous semigroup $P_2^E(t)$ in $L^2(E,\nu)$. In the rest of this paper we will not study $P_2^E(t)$ but only $P_2(t)$. However it is possible to prove a result analogous to Theorem \ref{identif} for $P_2^E(t)$ (see \cite{CE-DA1}). 
\end{rmk}

In this section we are going to prove Theorem \ref{identif}. As a first step in the next subsection we will study the behavior of $N_2$ on the set
\[
\xi_A(\X):=\Span\{\mbox{real and imaginary parts of the functions } x\mapsto e^{i\scal{x}{h}}\,|\, h\in \Dom(A^*)\}.
\]

\subsection{Behavior on $\xi_A(\X)$}

We recall the definition of $N_0$.
\begin{equation*}
 N_{0}\varphi(x):=\frac{1}{2}\tr[C\D^2\varphi(x)]+\scal{x}{A^*\D\varphi(x)}+\scal{F_0(x)}{\D\varphi(x)},\quad \varphi\in \xi_A(\X),\;x\in\X,
\end{equation*}
where
\begin{equation*}
F_0(x)=\begin{cases}
F(x) & x\in E,\\
0 & x\in \X\backslash E.
\end{cases}
\end{equation*}

 Let $\varphi\in\xi_A(\X)$, then there exist $m,n\in\N$, $a_1,\ldots,a_m,b_1,\ldots,b_n\in \R$ and\\ $h_1,\ldots,h_m,k_1,\ldots,k_n\in A^* $ such that
\[\varphi(x)=\sum_{i=1}^m a_i\sin(\gen{x,h_i})+\sum_{j=1}^n b_j\cos(\gen{x,k_j}).\]
Easy computations give for $x\in\X$
\begin{align*}
N_0\varphi(x)=\sum_{i=1}^m a_i&\pa{\gen{x,Ah_i}+\gen{F_0(x),h_i}-\frac{1}{2}\|C^{1/2}h_i\|}\sin(\gen{x,h_i})\notag\\
&\phantom{0000000}+\sum_{j=1}^n b_j\pa{\gen{x,Ak_j}+\gen{F_0(x),k_j}-\frac{1}{2}\|C^{1/2}k_j\|}\cos(\gen{x,k_j}),
\end{align*}
moreover by Hypothesis \ref{EU2}\eqref{EU2.7} and Theorem \ref{invX} we have
\begin{equation}\label{EU4.1}
\int_\X\norm{F_0(x)}^p d\nu(x)<+\infty,\quad \forall\; p\geq 1,
\end{equation}
and so $N_0\varphi$ belongs to $L^2(\X,\nu)$.

\begin{prop}\label{inc0}
Assume that Hypotheses \ref{EU3} hold true. $N_0$ is closable in $L^2(\X,\nu)$ and its closure $\overline{N}_0$ is dissipative in $L^2(\X,\nu)$. Moreover $N_2$ is an extension of $\overline{N}_0$, namely $\Dom(\overline{N}_0)\subseteq\Dom(N_2)$ and 
\begin{equation}\label{esten1}
\overline{N}_0\varphi=N_2\varphi,\quad \varphi\in \Dom(\overline{N}_0).
\end{equation}
\end{prop}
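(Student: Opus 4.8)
My plan is to reduce the entire statement to the single key fact that $\xi_A(\X)\subseteq\Dom(N_2)$ with $N_2\varphi=N_0\varphi$ for every $\varphi\in\xi_A(\X)$; once this is in hand, closability, dissipativity and \eqref{esten1} are purely abstract consequences of $N_2$ being the generator of the strongly continuous \emph{contraction} semigroup $P_2(t)$. Indeed, a generator is always a closed operator, so any operator admitting a closed extension is closable; since $N_2$ is closed and extends $N_0$ on the dense set $\xi_A(\X)$, the operator $N_0$ is closable and its closure satisfies $\overline N_0\subseteq N_2$, which is exactly \eqref{esten1}. Moreover the generator of a contraction $C_0$-semigroup is dissipative, and any restriction of a dissipative operator is dissipative; applying this to $\overline N_0\subseteq N_2$ gives dissipativity of $\overline N_0$.

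The heart of the argument is therefore the identity
\begin{equation}\label{planIto}
P(t)\varphi(x)-\varphi(x)=\int_0^t P(s)(N_0\varphi)(x)\,ds,\qquad \varphi\in\xi_A(\X),\; x\in E,
\end{equation}
which I would obtain by an It\^o-type computation on the approximating solutions of Proposition \ref{solapp}. Write $X_n(t,x)=Y_n(t,x)+W_A(t)$ and fix $h\in\Dom(A^*)$. Using the regularized trajectories $Y_{n,k}(\cdot,x)\in C^1([0,T],E)\cap C([0,T],\Dom(A))$ of \eqref{coco1} together with the remainder $o_{n,k}$ of \eqref{opiccolo}, and the fact that $\scal{W_A(t)}{h}$ is a genuine real It\^o process with $d\scal{W_A(t)}{h}=\scal{W_A(t)}{A^*h}\,dt+\scal{h}{\sqrt{C}\,dW(t)}$, I would first establish
\[
d\scal{X_n(t,x)}{h}=\big(\scal{X_n(t,x)}{A^*h}+\scal{F(X_n(t,x))}{h}\big)\,dt+\scal{h}{\sqrt{C}\,dW(t)}.
\]
Since $\varphi\in\xi_A(\X)$ depends on only finitely many pairings $\scal{\cdot}{h_i}$, the ordinary finite-dimensional It\^o formula applied to $\varphi(X_n(t,x))$ produces exactly the drift $N_0\varphi(X_n(t,x))$ plus a martingale; taking expectations annihilates the martingale and yields \eqref{planIto} with $X_n$ and $nR(n,A)x$ in place of $X$ and $x$.

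I would then pass to the limits $k\to\infty$ and $n\to\infty$. The convergences $Y_{n,k}\to Y_n$ and $o_{n,k}\to0$ of \eqref{coco1} dispose of the regularization, Theorem \ref{mcyos} and Proposition \ref{limmild} give $X_n(s,x)\to X(s,x)$, and \eqref{anlitic3} gives $nR(n,A)x\to x$. To justify the interchange of limit and integral I would use that $N_0\varphi$ has at most linear growth in $\norm{x}$ and $\norm{F_0(x)}$, so that the uniform moment bounds \eqref{stindXz}--\eqref{stindEz} (together with Remark \ref{momconvu}) supply an integrable dominating function and dominated convergence applies. This yields \eqref{planIto} for every $x\in E$, hence for $\nu$-a.e. $x$ since $\nu(E)=1$ by Theorem \ref{invX}. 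Reading \eqref{planIto} as an identity in $L^2(\X,\nu)$ — legitimate because $N_0\varphi\in L^2(\X,\nu)$ by \eqref{EU4.1} — and dividing by $t$, the right-hand side becomes $\tfrac1t\int_0^t P_2(s)(N_0\varphi)\,ds$, which tends to $N_0\varphi$ as $t\to0^+$ by strong continuity of $P_2(t)$. Hence $\varphi\in\Dom(N_2)$ and $N_2\varphi=N_0\varphi$, which is the key step.

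The main obstacle is the rigorous It\^o computation on $X_n$: because $X_n=Y_n+W_A$ is only a mild solution and $W_A(t)$ need not lie in $\Dom(A)$, the term $\scal{X_n}{A^*h}$ must be handled through the pairing with $A^*h$ rather than through $AX_n$ — which is precisely why the core is built from $\Dom(A^*)$. Controlling the remainder $o_{n,k}$ and verifying the uniform integrability needed for the double limit $k\to\infty$, $n\to\infty$ is the technically delicate part; everything else is bookkeeping or abstract semigroup theory.
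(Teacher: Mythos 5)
Your proposal is correct, and its core coincides with the paper's: everything hinges on the identity $P(t)\varphi(x)-\varphi(x)=\int_0^t P(s)(N_0\varphi)(x)\,ds$ for $\varphi\in\xi_A(\X)$ and $x\in E$, combined with $\nu(E)=1$. The paper simply cites \cite[Proof of Theorem 3.19]{DA1} for this identity (see \eqref{sviE}), whereas you reprove it by the standard finite-dimensional It\^o computation on the pairings $\scal{X_n(t,x)}{h}$, $h\in\Dom(A^*)$, using the regularized trajectories $Y_{n,k}$ of Proposition \ref{solapp} and the moment bounds \eqref{stindEzn}--\eqref{stindEz} to pass to the limit; that is exactly the content of the cited argument, so no gap there. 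Where you genuinely diverge is in the two surrounding steps. First, to upgrade the pointwise identity to convergence of the difference quotients in $L^2(\X,\nu)$, the paper runs a Vitali uniform-integrability argument, while you read the identity directly as a Bochner-integral identity in $L^2(\X,\nu)$ and invoke mean continuity of $P_2(t)$ applied to $N_0\varphi\in L^2(\X,\nu)$; this is cleaner and equally rigorous, modulo the (harmless, and also implicit in the paper) identification of the pointwise operator $P(s)$ with its $L^2$-extension $P_2(s)$ on the unbounded function $N_0\varphi$. Second, for closability and dissipativity the paper derives the explicit identity $\int_\X(N_0\varphi)\varphi\,d\nu=-\tfrac12\int_\X\norm{C^{1/2}\D\varphi}^2d\nu$ from the invariance relation \eqref{invinf} and then uses that a densely defined dissipative operator is closable with dissipative closure; you instead obtain closability from the mere existence of the closed extension $N_2$ and dissipativity of $\overline N_0$ by restriction from the generator of the contraction semigroup $P_2(t)$. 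Your route is shorter and purely abstract; the paper's pays for the extra computation by producing the carr\'e-du-champ identity, which is reused later (Proposition \ref{qua2} and the Dirichlet-form analysis of Section \ref{gradsist}), so you lose that byproduct but nothing needed for Proposition \ref{inc0} itself.
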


\begin{proof}
By Theorem \ref{solMild}, for any $x\in E$, the trajectories of $\mii{X}$ take values in $E$. So by \cite[Proof of Theorem 3.19]{DA1}, for any $\varphi\in\xi_A(\X)$ and $x\in E$, we have
\begin{align}
P_2(t)\varphi(x)=\E[\varphi(X(t,x))]&=\varphi(x)+\E\left[\int^t_0N_0\varphi(X(s,x))ds\right]\notag\\
&=\varphi(x)+\int^t_0 P(s)N_0\varphi(X(s,x))ds\label{sviE},
\end{align}
and so
\begin{equation}\label{vit1}
\lim_{t\rightarrow 0}\frac{P_2(t)\varphi(x)-\varphi(x)}{t}=N_0\varphi(x).
\end{equation}
To obtain \eqref{esten1} we need to prove that
\begin{align}
\lim_{t\rightarrow 0}\int_\X &\abs{\frac{P_2(t)\varphi(x)-\varphi(x)}{t}-N_0\varphi(x)}^2\nu(dx)=0,\quad \forall\;\varphi\in\xi_A(\X).\label{vit2}
\end{align}
We recall the Vitali convergence theorem (see \cite[Theorem 2.24]{FO-LE1}): \eqref{vit2} is verified if and only if the following three conditions are verified.
\begin{enumerate}
\item $\{\frac{P(t)\varphi-\varphi}{t}\}_{t\geq 0}$ converges in measure to $N_0\varphi$ with respect to the measure $\nu$.
\item For any $\eps>0$ there exists $\Gamma\in\mathcal{B}(\X)$ such that $\nu(\Gamma)<+\infty$ and
\begin{align*}
\frac{1}{t^2}\int_{(\X-\Gamma)}\vert P(t)\varphi(x)-\varphi(x))\vert^2  \nu(dx)\leq \eps\quad \forall\; t>0.
\end{align*}
\item For any $\eps>0$ there exists $\delta>0$ such that whenever $\Gamma\in\mathcal{B}(\X)$ with $\nu(\Gamma)<\delta$ we have
\begin{align*}
\frac{1}{t^2}\int_\Gamma\vert P(t)\varphi(x)-\varphi(x))\vert^2  \nu(dx)\leq \eps \quad \forall\; t>0.
\end{align*}
\end{enumerate}
By \eqref{vit1} and $\nu(E)=1$, (1) is verified. Since $\nu$ is a probability measure then (3) implies (2). We prove (3). We fix $\eps>0$. Since $N_0\varphi\in L^2(\X,\nu)$, there exists $\delta>0$ such that whenever $\Gamma\in\mathcal{B}(\X)$ with $\nu(\Gamma)<\delta$, then
\[\int_\Gamma|N_0\varphi(x)|^2\nu(dx)<\eps.\]
Recalling that $\nu(E)=1$, by the H\"older inequality, the invariance of $P(t)$ with respect to $\nu$ and \eqref{sviE} we have
\begin{align*}
\frac{1}{t^2}\int_\Gamma\vert P(t)\varphi(x)-\varphi(x))\vert^2  \nu(dx)&=\frac{1}{t^2}\int_{\Gamma\cap E}\vert P(t)\varphi(x)-\varphi(x))\vert^2 \nu(dx)\\
&=\int_{\Gamma\cap E}\abs{\int_0^tP(s)N_0\varphi(x)\frac{ds}{t}}^2\nu (dx)\\
&\leq \frac{1}{t}\int_0^t\left(\int_{\Gamma\cap E}\abs{P(s)(N_0\varphi)(x)}^2\nu (dx)\right)ds\\
&\leq  \frac{1}{t}\int_0^t\left(\int_{\Gamma\cap E} P(s)(\vert N_0\varphi\vert^2)(x)\nu (dx)\right)ds\\
&=  \frac{1}{t}\int_0^t\left(\int_{\Gamma\cap E}\vert N_0\varphi(x)\vert^2\nu (dx)\right)ds=\frac{1}{t}\int_0^t\eps ds=\eps.
\end{align*}
\noindent Hence, by the Vitali convergence theorem, we obtain \eqref{vit2} and so \eqref{esten1}. In particular, since $\nu$ is the invariant measure of $P_2(t)$, for any $\varphi\in\xi_A(\X)$, we have
\begin{equation}\label{invinf}
\int_\X N_0\varphi d\nu=\int_E N_2\varphi d\nu=0.
\end{equation}
Let $\varphi\in\xi_A(\X)$, by standard calculations we obtain
\[
N_0\varphi^2(x)=2\varphi(x)N_0\varphi(x)+\norm{C^{1/2}\D \varphi(x)}^2.
\]
Hence integrating with respect to $\nu$ and exploiting \eqref{invinf} we have
\[
\int_\X(N_0\varphi(x))\varphi(x)\nu(dx)=-\frac{1}{2}\int_\X\norm{C^{1/2}\D \varphi(x)}^2\nu(x),\quad\forall\;\varphi\in\xi_A(\X),
\]
so $N_0$ is dissipative and, since $\xi_A(\X)$ is dense in $L^2(\X,\nu)$, it is closable in $L^2(\X,\nu)$ and its closure $\overline{N}_0$ is dissipative in $L^2(\X,\nu)$.
\end{proof}

We conclude this subsection with a useful criterium to check whether a function $\varphi:\X\ra\R$ belongs to $\Dom(\overline{N}_0)$.
\begin{lemm}\label{inc}
Assume that Hypotheses \ref{EU3} hold true. If $\varphi\in\Dom(L_{b,2})\cap C^1_b(\X)$, then $\varphi\in\Dom(\overline{N}_0)$ and
\begin{equation*}
\overline{N}_0\varphi(x)=L_{b,2}\varphi(x)+\scal{F_0(x)}{\D\varphi(x)},\qquad x\in\X;
\end{equation*}
where $L_{b,2}$ is the operator introduced in Theorem \ref{KOKO}.
\end{lemm}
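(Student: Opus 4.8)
The plan is to approximate $\varphi$ in the graph norm of $\overline{N}_0$ by elements of $\xi_A(\X)$, relying on the approximation result Proposition \ref{appxiA}. Since $\varphi\in\Dom(L_{b,2})\cap C^1_b(\X)$, that proposition furnishes a family $\{\varphi_{n_1,n_2,n_3,n_4}\}\subseteq\xi_A(\X)$ converging to $\varphi$ pointwise, together with the pointwise convergence of the gradients $\D\varphi_{n_1,n_2,n_3,n_4}$ and of $L_{b,2}\varphi_{n_1,n_2,n_3,n_4}$, all understood as iterated limits and all controlled by the single bound $C_\varphi(1+\norm{x}^2)$ of \eqref{Konatsu}. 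For each member of this family, Theorem \ref{KOKO}(iii) gives $L_{b,2}\varphi_{n_1,n_2,n_3,n_4}=L_0\varphi_{n_1,n_2,n_3,n_4}$, so by Proposition \ref{inc0}
\[
\overline{N}_0\varphi_{n_1,n_2,n_3,n_4}=N_0\varphi_{n_1,n_2,n_3,n_4}=L_{b,2}\varphi_{n_1,n_2,n_3,n_4}+\scal{F_0}{\D\varphi_{n_1,n_2,n_3,n_4}}.
\]

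The decisive point is to upgrade these pointwise limits to convergence in $L^2(\X,\nu)$. Because $\nu$ has finite moments of every order by \eqref{momeinvX}, the function $x\mapsto C_\varphi(1+\norm{x}^2)$ belongs to $L^2(\X,\nu)$, so the bound \eqref{Konatsu} dominates both $\varphi_{n_1,n_2,n_3,n_4}$ and $L_{b,2}\varphi_{n_1,n_2,n_3,n_4}$ by an $L^2(\X,\nu)$ function. For the first-order term I would dominate $\abs{\scal{F_0(x)}{\D\varphi_{n_1,n_2,n_3,n_4}(x)}}$ by $C_\varphi\norm{F_0(x)}(1+\norm{x}^2)$, which lies in $L^2(\X,\nu)$ by the H\"older inequality, combining the integrability \eqref{EU4.1} of $\norm{F_0}$ with the finiteness of all moments of $\nu$. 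Securing this $L^2$-domination of the drift term is the main obstacle, and it is exactly where the properties $\nu(E)=1$ and $\int_\X\norm{F_0}^p\,d\nu<+\infty$ become indispensable.

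With the dominations in hand, I would process the four iterated limits one index at a time. Fixing $n_1,n_2,n_3$ and letting $n_4\to+\infty$, the dominated convergence theorem turns the pointwise convergences into convergence in $L^2(\X,\nu)$ of $\varphi_{n_1,n_2,n_3,n_4}$ and of the right-hand side displayed above; since $\overline{N}_0$ is closed (Proposition \ref{inc0}), the pointwise limit in $n_4$ lies in $\Dom(\overline{N}_0)$ and its image under $\overline{N}_0$ is the corresponding $L^2$-limit. Repeating this step successively for $n_3$, $n_2$, and $n_1$---at each stage the relevant pointwise limits still exist and remain dominated by the same $L^2(\X,\nu)$ functions, so the passage through the closed operator stays legitimate---yields $\varphi\in\Dom(\overline{N}_0)$ together with
\[
\overline{N}_0\varphi=L_{b,2}\varphi+\scal{F_0}{\D\varphi},
\]
which is the assertion. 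The only genuinely delicate bookkeeping is that Proposition \ref{appxiA} provides iterated rather than joint convergence, but the uniform bound \eqref{Konatsu} renders the step-by-step argument through $\overline{N}_0$ harmless.
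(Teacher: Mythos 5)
Your proposal is correct and follows essentially the same route as the paper: approximate $\varphi$ by the family from Proposition \ref{appxiA}, dominate $N_0\varphi_{n_1,n_2,n_3,n_4}$ by an $L^2(\X,\nu)$ function using \eqref{Konatsu}, \eqref{EU4.1} and \eqref{momeinvX}, and pass to the limit via dominated convergence and the closedness of $\overline{N}_0$. Your explicit index-by-index handling of the iterated limits is a more careful write-up of a step the paper leaves implicit, but it is not a different argument.
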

\begin{proof}
By Proposition \ref{appxiA} a family $\{\varphi_{n_1,n_2,n_3,n_4}\,|\, n_1,n_2,n_3,n_4\in\N\}\subseteq\xi_A(\X)$ exists such that, for any $x\in\X$,
\[
\lim_{n_1\rightarrow+\infty}\lim_{n_2\rightarrow+\infty}\lim_{n_3\rightarrow+\infty}\lim_{n_4\rightarrow+\infty}\overline{N_0}\varphi_{n_1,n_2,n_3,n_4}(x)=L_{b,2}\varphi(x)+\scal{F(x)}{\D\varphi(x)}.
\]
whenever $\varphi\in\Dom(L_{b,2})\cap C^1_b(\X)$. By \eqref{Konatsu}, there exists a constant $C_\varphi$, such that for any $x\in E$
\[
\vert \overline{N_0}\varphi_{n_1,n_2,n_3,n_4}(x)\vert=\vert N_0\varphi_{n_1,n_2,n_3,n_4}(x)\vert\leq C_\varphi(1+\norm{x}^{m+2})(1+\norm{F(x)}^2),
\]
so, since $\nu(E)=1$, by \eqref{EU4.1}, \eqref{momeinvX} and the Dominated Convergence theorem we obtain the statement.
\end{proof}

Before proving Theorem \ref{identif} we need to introduce an additional hypotheses which will allow us to use the regularizing sequence of $F$ defined in the next subsection.
\begin{hyp}\label{EU3var}
Assume that Hypotheses \ref{EU3} hold true and that there exists a constant $\zeta_2\in\R$ such that $F-\zeta_2\Id_\X:\Dom(F)\subset\X\ra\X$ is m-dissipative.
\end{hyp}

\subsection{A regularizing family for F}\label{reF}

First of all we consider the Yosida approximantions of $F$. For any $x\in\X$ and $\delta>0$, we consider the equation
\begin{equation}\label{eq-yos}
y-\delta (F(y)-\zeta_2 y)=x.
\end{equation}
By Hypothesis \ref{EU3var} the function $G(x):=F(x)-\zeta_2 x:\Dom(F)\subseteq\X\ra\X$ is m-dissipative, so by \cite[Proposition 5.5.3]{DA-ZA2} or \cite[Proposition A.2.2]{CER1} the equation \eqref{eq-yos} has a unique solution $x_\delta\in \Dom(F)$ and, for any $\delta>0$, the function $G_\delta(x):=G(x_\delta)$ verifies 
\begin{align}
&\norm{G_\delta(x)-G_\delta(z)}\leq \frac{2}{\delta}\norm{x-z},\quad x,z\in\X\label{1yy};\\
&\scal{G_\delta(x)-G_\delta(z)}{x-z}\leq 0, \quad x,z\in\X\label{2yy};\\
&\norm{G_\delta(x)}\leq \norm{G(x)},\quad x\in\X\label{3yy}.
\end{align}
Moreover for any $x\in\X$, we have
\begin{equation}\label{4yy}
\lim_{\delta\ra 0}\norm{G_\delta(x)-x}=0.
\end{equation}
For any $\delta>0$ we define
\[
F_\delta(x):=F(x_\delta)=G_\delta(x)-\zeta_2x_\delta.
\]
By \eqref{1yy} it follows immediately that $F_\delta$ is Lipschitz continuous and by \eqref{eq-yos} and \eqref{2yy} we have 
\begin{align*}
0\geq& \gen{G(x_\delta)-G(z_\delta),x-z}=\gen{F(x_\delta)-F(z_\delta),x-z}-\zeta_2\gen{x_\delta-z_\delta,x-z}\\
=& \gen{F(x_\delta)-F(z_\delta),x-z}+\zeta_2\gen{\delta(F(z_\delta)-\zeta_2 z_\delta)-\delta(F(x_\delta)-\zeta_2 x_\delta)+z-x,x-z}\\
=& \gen{F(x_\delta)-F(z_\delta),x-z}+\zeta_2\delta\gen{F(z_\delta)-F(x_\delta),x-z}\\
&\phantom{aaaaaaaaaaaa00000000000000000000}+\zeta_2^2\delta \gen{x_\delta-z_\delta,x-z}+\zeta_2 \gen{z-x,x-z}\\
=& \gen{F(x_\delta)-F(z_\delta),x-z}-\zeta_2\norm{x-z}^2+\zeta_2\delta\gen{G(z_\delta)-G(x_\delta),x-z}\\
\geq & \scal{F_{\delta}(x)-F_{\delta}(z)}{x-z}-\zeta_2\norm{x-z}^2,
\end{align*}
and so \(\scal{F_{\delta}(x)-F_{\delta}(z)}{x-z}\leq \zeta_2\norm{x-z}^2\). Moreover, for any $\delta>0$ and $x\in\X$, by \eqref{3yy} we have
\begin{align*}
\norm{F_\delta(x)}-\norm{F(x)}\leq \norm{F_\delta(x)-F(x)}\leq & \norm{G(x_\delta)-G(x)}+\zeta_2\norm{x_\delta-x}\notag\\
\leq &\norm{G(x_\delta)-G(x)}+\zeta_2\delta\norm{G(x_\delta)} \leq (2+\zeta_2\delta)\norm{G(x)},
\end{align*}
so
\begin{align}\label{3.5yy}
\norm{F_\delta(x)}\leq (2+\zeta_2\delta)\norm{G(x)}+\norm{F(x)}.
\end{align}
By \eqref{4yy} we have
\begin{equation}\label{5yy}
\lim_{\delta\ra 0}\norm{F_\delta(x)-x}=0.
\end{equation}
\noindent The approximations $\{F_\delta\}_{\delta>0}$ are Lipschitz continuous, however we need to introduce a further approximations that are at least Gateaux differentiable. For every $\delta,s>0$ and $x\in\X$, we define 
\[
F_{\delta,s}(x):=\int_\X F_{\delta}(y)\mathcal{N}(e^{-(s/2)Q^{-1}}x,Q_s)(dy),
\]
\noindent where $Q_s=Q(\Id-e^{-sQ^{-1}})$ and $Q$ is a positive and trace class operator on $\X$. This type of regularization is classical and it is based on the Mehler formula for the Ornstein--Uhlenbeck semigroup (see \cite[Proof of Theorem 11.2.14]{DA-ZA1}). By standard calculations, for any $\delta,s>0$ and $x,z\in\X$, we have that $F_{\delta,s}(x)$ is Lipschitz continuous and
\begin{align}
&\scal{F_{\delta,s}(x)-F_{\delta,s}(z)}{x-z}\leq \zeta_2\norm{x-z}^2.\notag
\end{align}
Moreover for any $\delta>0$ and $x\in\X$ we have
\begin{equation}\label{6yy}
\lim_{s\ra 0}\norm{F_{\delta,s}(x)-x}=0.
\end{equation}
For any $\delta>0$, $F_\delta$ is Lipschitz continuous so
\begin{equation}\label{7yy}
\sup_{s\geq 0}\sup_{x\in\X}\norm{F_{\delta,s}(x)}<+\infty.
\end{equation}
Hence by \eqref{3.5yy}, \eqref{5yy}, \eqref{6yy}, \eqref{7yy} and the dominated convergence theorem, we have
\begin{equation}\label{555}
\lim_{\delta\rightarrow 0}\lim_{s\rightarrow 0}\norm{F_{\delta,s}-F}_{L^2(\X,\nu)}.
\end{equation} 
Finally we stress that, for any $s>0$, we have
\begin{align}
e^{-(s/2)Q^{-1}}(\X) \subseteq Q_s^{1/2}(\X).\label{Fii}
\end{align}
Indeed by the analyticity of $e^{-(s/2)Q^{-1}}$ and by \cite[Proposition 2.1.1(i)]{LUN1}  the range of $e^{-(s/2)Q^{-1}}$ is contained in the domain of $Q^{-k}$ for every $k\in\N$. So to prove \eqref{Fii}  it is sufficient to prove that $\Id-e^{-sQ^{-1}}$ is invertible. Since $-Q^{-1}$ is negative, we have $\|e^{-sQ^{-1}}\|_{\mathcal{L}(\X)}<1$, and so $\Id-e^{-sQ^{-1}}$ is invertible. In particular $Q_s^{1/2}(\X)= Q^{1/2}(\X)$ and so we get \eqref{Fii}. Hence by the Cameron-Martin formula, for any $s,\delta>0$, we have
\[
F_{\delta,s}(x)=\int_\X F(y)e^{\scal{Q_s^{-\frac{1}{2}}e^{-sQ^{-1}}x}{y}+\frac{1}{2}\norm{Q_s^{-\frac{1}{2}}e^{-sQ^{-1}}x}^2}\mathcal{N}(0,Q_s)(dy);
\]
and so $F_{\delta,s}$ is Gateaux differentiable.

\subsection{Proof of theorem \ref{identif}}\label{core}

Finally we can prove Theorem \ref{identif}.

\begin{proof}[Proof of Theorem \ref{identif}]

Let $f\in C^1_b(\X)$. For any $\delta,s>0$, we set
\[
\varphi_{\delta,s}(x):=\int^{+\infty}_0e^{-\lambda t}P_{\delta,s}(t)f(x)dt,\quad x\in\X,
\]
where $\lambda>0$, $P_{\delta,s}(t)$ is the transition semigroup of the equation
\begin{gather*}
\eqsys{
dX(t,x)=\big(AX(t,x)+F_{\delta,s}(X(t,x))\big)dt+\sqrt{C}dW(t), & t\geq 0;\\
X(0,x)=x.
}
\end{gather*}
and $F_{\delta,s}$ is the function defined in Subsection \ref{reF}. Since $F_{\delta,s}$ is Lipschitz continuous, Gateaux differentiable  and $F_{\delta,s}-\zeta_2\Id$ is dissipative, by \cite[Step 1. of Proof of Theorem 1.4]{BF2} or \cite[Theorem 3.21]{DA4} or \cite[Proof of Theorem 11.2.13]{DA-ZA1}, for any $\delta>0$ and $s>0$, we have that $\varphi_{n,s}\in\Dom(L_{b,2})\cap C^1_b(\X)$ and 
\[
\lambda\varphi_{\delta,s}-L_{b,2}\varphi_{\delta,s}-\scal{\nabla \varphi_{\delta,s}}{F_{\delta,s}}=f, 
\]
\begin{equation}\label{uuu}
\norm{\D \varphi_{\delta,s}}_{C_b(\X)}\leq \dfrac{1}{\lambda-\zeta}\norm{\D f}_{C_b(\X)},
\end{equation}
where $\zeta$ is the constant of Hypotheses \ref{EU2}\eqref{EU2.4}. By Proposition \ref{inc}, for any $\delta>0$ and $s>0$, we have $\varphi_{\delta,s}\in\Dom(\overline{N}_0)$ and
\[
\lambda\varphi_{\delta,s}-\overline{N_0}\varphi_{\delta,s}=f+\scal{\nabla\varphi_{\delta,s}}{F_{\delta,s}-F},
\]
and by Lemma \ref{inc0}
\[
\lambda\varphi_{\delta,s}-N_2\varphi_{\delta,s}=f+\scal{\D \varphi_{\delta,s}}{F_{\delta,s}-F},
\]
where the equality holds in $L^2(\X,\nu)$. Noticing that the right hand side of \eqref{uuu} does not depend on $s,\delta$ and using \eqref{555}, we deduce that, for any $f\in C^1_b(\X)$, there exists a family $\{\varphi_{\delta,s}\,|\,n\in\N,s>0\}\subseteq\Dom(\overline{N}_0)$, such that 
\begin{equation*}
\lim_{\delta\rightarrow 0}\lim_{s\rightarrow 0}(\lambda\Id-N_2)\varphi_{\delta,s}=f,\quad \mbox{ in } L^2(X,\nu).
\end{equation*}
Since $C^1_b(\X)$ is dense in $L^2(\X,\nu)$, then $(\lambda\Id-N_2)(\Dom(\overline{N}_0))$ is dense in $L^2(\X,\nu)$, so by\cite[Proposition 3.7]{BF2} we obtain $\Dom(\overline{N}_0)=\Dom(N_2)$.
\end{proof}

\section{Dirichlet semigroup associated to a dissipative gradient systems}\label{gradsist}

Assume that Hypotheses \ref {EU3var} hold true. Let $\g\subseteq \X$ be an open set such that $\nu(\g)>0$. We consider the Dirichlet semigroup associated to the SPDE \eqref{eqFO}, defined by
\begin{equation*}
P^{\mathcal{O}}(t)\varphi(x):=\mathbb{E}\left[\varphi(X(t,x))\mathbb{I}_{\{\tau_x> t\}}\right]:=\int_\Omega\varphi(X(t,x))\mathbb{I}_{\{\tau_x> t\}}d\mathbb{P},\quad \varphi\in B_b(\mathcal{O}),
\end{equation*} 
where $X(t,x)$ is the generalized mild solution of the SPDE \eqref{eqFO} and $\tau_x$ is the stopping time defined by
\begin{equation}\label{ta}
\tau_x=\inf\{ t> 0\; : \; X(t,x)\in \g^c \}.
\end{equation} 
\begin{prop}\label{este2}
Assume that Hypotheses \ref {EU3var} hold true. For any $\varphi\in B_b(\g)$, $t>0$, we have
\[
\int_{\g}(P^{\g}(t)\varphi)^2d\nu\leq \int_{\g}\varphi^2d\nu.
\]
\end{prop}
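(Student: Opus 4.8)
The plan is to dominate the Dirichlet semigroup pointwise by the full transition semigroup $P(t)$ applied to the square of the null extension of the datum, and then to conclude from the positivity of $P(t)$ together with the invariance of $\nu$. First I would introduce the null extension $\hat\varphi\in B_b(\X)$ of $\varphi$, defined by $\hat\varphi(x)=\varphi(x)$ for $x\in\g$ and $\hat\varphi(x)=0$ for $x\in\g^c$. The key observation is that on the event $\{\tau_x>t\}$ the trajectory has not yet left $\g$: since $\g^c$ is closed and the paths of $\mi{X}$ are continuous $\qc$, the condition $\tau_x>t$ forces $X(s,x)\in\g$ for every $s\in[0,t]$, and in particular $X(t,x)\in\g$. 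Hence $\varphi(X(t,x))\mathbb{I}_{\{\tau_x>t\}}=\hat\varphi(X(t,x))\mathbb{I}_{\{\tau_x>t\}}$ $\qc$, so that
\[
P^\g(t)\varphi(x)=\E\big[\hat\varphi(X(t,x))\mathbb{I}_{\{\tau_x>t\}}\big],\qquad x\in\g.
\]

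Next, writing $\mathbb{I}_{\{\tau_x>t\}}=\mathbb{I}_{\{\tau_x>t\}}\cdot\mathbb{I}_{\{\tau_x>t\}}$ and applying the Cauchy--Schwarz inequality on $(\Omega,\mathbb{P})$ to the factors $\hat\varphi(X(t,x))\mathbb{I}_{\{\tau_x>t\}}$ and $\mathbb{I}_{\{\tau_x>t\}}$, I would obtain, for every $x\in\g$,
\[
\big(P^\g(t)\varphi(x)\big)^2\leq \E\big[\hat\varphi(X(t,x))^2\mathbb{I}_{\{\tau_x>t\}}\big]\cdot\mathbb{P}(\tau_x>t)\leq \E\big[\hat\varphi(X(t,x))^2\big]=P(t)(\hat\varphi^2)(x),
\]
where the second inequality uses $\mathbb{P}(\tau_x>t)\leq 1$ and $\mathbb{I}_{\{\tau_x>t\}}\leq 1$, and where $\hat\varphi^2$ coincides with the null extension $\widehat{\varphi^2}$ of $\varphi^2$.

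Finally I would integrate this pointwise bound over $\g$ with respect to $\nu$. Since $\hat\varphi^2\geq 0$ and $P(t)$ is positive (Proposition \ref{dimsemi}), the function $P(t)(\hat\varphi^2)$ is non-negative on all of $\X$, so enlarging the domain of integration from $\g$ to $\X$ only increases the integral; the invariance of $\nu$ for $P(t)$ then yields
\[
\int_\g\big(P^\g(t)\varphi\big)^2d\nu\leq\int_\X P(t)(\hat\varphi^2)\,d\nu=\int_\X\hat\varphi^2\, d\nu=\int_\g\varphi^2\, d\nu,
\]
which is the claim. The only point requiring care is that the invariance identity $\int_\X P(t)\psi\,d\nu=\int_\X\psi\,d\nu$ is stated in Definition \ref{defiinv} only for $\psi\in C_b(\X)$, whereas here $\psi=\hat\varphi^2$ is merely bounded and Borel; but the extension to $B_b(\X)$ is standard (it is already used in the estimate \eqref{pcontra}) via the associated transition kernel and a monotone-class argument, so I expect no genuine obstacle there. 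The substantive content of the proof is thus the pointwise domination $\big(P^\g(t)\varphi\big)^2\leq P(t)(\hat\varphi^2)$ on $\g$, obtained from the exit-time pathwise identity and Cauchy--Schwarz.
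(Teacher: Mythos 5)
Your proposal is correct and follows essentially the same route as the paper: the pointwise domination $(P^{\g}(t)\varphi)^2\leq P(t)(\widehat{\varphi}^{\,2})$ via the null extension and the Cauchy--Schwarz (H\"older) inequality, followed by positivity of $P(t)$ to enlarge the domain of integration and invariance of $\nu$ to conclude. Your treatment is in fact slightly more careful than the paper's, since you justify replacing $\varphi(X(t,x))$ by $\widehat{\varphi}(X(t,x))$ on $\{\tau_x>t\}$ through path continuity before squaring, rather than writing $\varphi^2(X(t,x))$ directly.
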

\begin{proof}
Let
\[
\widehat{\varphi}(x)=
\begin{cases}
\varphi(x) & x\in\g,\\
0 & x\in\g^c.
\end{cases}
\]
By the H\"older inequality, we have
\[
(P^{\g}(t)\varphi)^2\leq \mathbb{E}[\varphi^2(X(t,x))\mathbb{I}_{\{\tau_x\geq t\}}]\leq \mathbb{E}[\widehat{\varphi}^2(X(t,x))\mathbb{I}_{\{\tau_x\geq t\}}]=P(t)(\widehat{\varphi}^2).
\]
Since $\nu$ is invariant for $P(t)$ and $P(t)$ is non-negative (see definition at beginning of Subsection \ref{P3}), then we conclude
\begin{align*}
\int_{\g}(P^{\g}(t)\varphi)^2d\nu\leq \int_{\g}P(t)(\widehat{\varphi}^2)d\nu\leq \int_{\X}P(t)(\widehat{\varphi}^2)d\nu=\int_\X\widehat{\varphi}^2d\nu= \int_{\g}\varphi^2d\nu.
\end{align*}
\end{proof}
\noindent By Proposition \ref{este2} the semigroup $P^{\g}(t)$ is uniquely extendable to a strongly continuous semigroup $P_2^{\g}(t)$ in $L^2(\g,\nu)$.

\begin{rmk}
By the same arguments $P^{\g}(t)$ is uniquely extendable to a strongly continuous semigroup in $L^p(\g,\nu)$, for any $p\geq 1$.
\end{rmk}
\begin{defi}
We denote by $M_2$ the infinitesimal generator of $P_2^{\g}(t)$.
\end{defi}
In this section we prove Theorem \ref{identifDiri}. We use the technique presented in \cite{AS-VA1, DA-LU1} in the case $F=0$. To do that, we have to introduce a quadratic form associated to $N_2$. Now we state the hypotheses and the preliminary results that will allow us to use such approach.

\subsection{Sobolev spaces}
\begin{hyp}\label{Sobmu}
Assume that Hypotheses \ref{EU3var} hold true and the following conditions hold true.
\begin{enumerate}[\rm(i)]
\item $\rm{Ker}(C)=\{0\}$.

\item  $A:\Dom(A)\subset\X\ra\X$ is self-adjoint and there exist $w>0$ and $M>0$ such that 
\[
\norm{e^{tA}}_{\mathcal{L}(\X)}\leq Me^{-wt}, \quad t\geq 0.
\]

\item $C(\Dom(A))\subseteq \Dom(A)$, and $ACx=CAx$ for any $x\in\Dom(A)$.

\end{enumerate}
\end{hyp} 

Under Hypotheses \ref{Sobmu} the operator
\[
Q_\infty=\int_0^\infty e^{tA}Ce^{tA^*}dt
\]
is a positive and trace class operator. 
Let $\mu$ the gaussian measure of mean $0$ and covariance operator $Q_\infty$ (we refer to \cite{BOGIE1} for a discussion of Gaussian measures in infinite dimension). Under Hypotheses \ref{Sobmu}, by \cite[Proof of Proposition 10.1.2]{DA-ZA1}, the operator
\[
C^{1/2}\D:\xi_A(\X)\subseteq L^2(\X,\mu)\ra L^2(\X,\mu,\X)
\]
is closable and we denote by $W_{C}^{1,2}(\X,\mu)$ its domain. 
\begin{hyp}\label{DIRI}
Assume that Hypotheses \ref{Sobmu} hold true and there exists  a lower semicontinuous function $U:\X\ra \R$ such that 
\begin{enumerate}

\item $\norm{x}^2e^{-2U}\in L^1(\X,\mu)$ and $e^{-2U}\in W^{1,2}(\X,\mu)$;
\item $F=-C\D U$.
\end{enumerate}
\end{hyp} 

Under Hypotheses \ref{DIRI} the SPDE \eqref{eqFO} becomes
\begin{gather*}
\eqsys{
dX(t,x)=\big(AX(t,x)-C\D U(X(t,x))\big)dt+\sqrt{C}dW(t), & t\in [0,T];\\
X(0,x)=x\in \X,
}
\end{gather*}
the operator \eqref{OPFO} reads as
\begin{equation*}
N_0\varphi(x):=\frac{1}{2}\tr[C\D^2\varphi(x)]+\scal{x}{A^*\D\varphi(x)}-\scal{C\D U(x)}{\D\varphi(x)},\qquad \varphi\in \xi_A(\X),\ x\in\X,
\end{equation*}
Moreover the following results are verified.
\begin{prop}\label{qua2}
Assume that Hypotheses \ref{DIRI} hold true.
\begin{enumerate}
\item The invariant measure $\nu$ of $P(t)$ has the form
\begin{equation*}
\nu(dx)=\frac{e^{-2U(x)}}{B}\mu(dx),\quad B:=\int_\X e^{-2U(x)}\mu(dx).
\end{equation*}
\item The operator
\[
C^{1/2}\D:\xi_A(\X)\subseteq L^2(\X,\nu)\ra L^2(\X,\nu,\X)
\]
is closable, and we denote by $W_{C}^{1,2}(\X,\nu)$ its domain.

\item For any $\varphi\in W_C^{1,2}(\X,\nu)$ and $\psi\in \Dom(N_2)$ we have
\begin{equation*}
\int_{\X} (N_2\psi)\varphi d\nu=\mathcal{Q}_2(\varphi,\psi):=-\frac{1}{2}\int_{\X}\langle C^{1/2}\D\varphi,C^{1/2}\D\psi\rangle d\nu.
\end{equation*}
\end{enumerate}
\end{prop}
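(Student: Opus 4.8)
The plan is to prove the three assertions in order, since (2) and (3) rest on (1) and on a single integration-by-parts formula for $\nu$. For (1), uniqueness of the invariant measure is already available (Theorem \ref{invX}), so it suffices to check that the probability measure $\rho\,\mu$, with $\rho:=e^{-2U}/B$, is invariant for $P(t)$ and then to invoke uniqueness. The key input is the classical symmetry of $L_0$ with respect to $\mu$: under Hypotheses \ref{Sobmu} the self-adjointness of $A$ and the commutation $AC=CA$ yield the Lyapunov identity $AQ_\infty+Q_\infty A^*=-C$, whence $\mu=\mathcal N(0,Q_\infty)$ is the symmetrizing measure of the pure Ornstein--Uhlenbeck semigroup and
\[
\int_\X (L_0\varphi)\psi\,d\mu=-\frac{1}{2}\int_\X\scal{C^{1/2}\D\varphi}{C^{1/2}\D\psi}\,d\mu,\qquad \varphi,\psi\in\xi_A(\X).
\]
Applying this with test function $\psi\rho$, together with $\D\rho=-2\rho\,\D U$ and $F=-C\D U$, the two first--order contributions cancel and I obtain $\int_\X N_0\varphi\,d(\rho\mu)=0$ for every $\varphi\in\xi_A(\X)$, i.e. $\rho\mu$ is infinitesimally invariant. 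Via the standard characterisation of invariant measures for this class of transition semigroups (cf. \cite{DA-ZA1,DA-ZA4}) this upgrades to invariance of $\rho\mu$, and uniqueness (Theorem \ref{invX}) forces $\nu=\rho\mu$ after normalisation by $B$.

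For (2) I would transport the Gaussian integration-by-parts formula through the density $\rho$ to get, for $\varphi,\psi\in\xi_A(\X)$ and $z\in\X$,
\[
\int_\X\scal{C^{1/2}\D\varphi}{C^{1/2}z}\,\psi\,d\nu=-\int_\X\varphi\,\scal{C^{1/2}\D\psi}{C^{1/2}z}\,d\nu-\int_\X\varphi\psi\,v_z\,d\nu,
\]
where $v_z\in L^2(\X,\nu)$ combines the logarithmic derivative of $\mu$ along $C^{1/2}z$ with the term $-2\scal{\D U}{C^{1/2}z}$ coming from $\rho$; the fact that $v_z$ genuinely lies in $L^2(\X,\nu)$ is where the assumptions $\norm{x}^2e^{-2U}\in L^1(\X,\mu)$ and $e^{-2U}\in W^{1,2}(\X,\mu)$ enter. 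This identity exhibits a densely defined formal adjoint for $C^{1/2}\D$ in $L^2(\X,\nu)$: if $\varphi_n\in\xi_A(\X)$ satisfy $\varphi_n\to0$ in $L^2(\X,\nu)$ and $C^{1/2}\D\varphi_n\to G$ in $L^2(\X,\nu,\X)$, testing against $\psi\,C^{1/2}z$ and letting $n\to\infty$ annihilates the left-hand side, forcing $G$ to be orthogonal to a dense set and hence $G=0$. This gives closability and the definition of $W_C^{1,2}(\X,\nu)$.

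For (3) I would first establish the identity on the core. Transporting the Ornstein--Uhlenbeck formula exactly as in (1), but keeping the factor $\psi$, I get for $\varphi,\psi\in\xi_A(\X)$
\[
\int_\X (N_0\varphi)\psi\,d\nu=-\frac{1}{2}\int_\X\scal{C^{1/2}\D\varphi}{C^{1/2}\D\psi}\,d\nu,
\]
which is the claim on $\xi_A(\X)$ and incidentally shows that $N_0$ is symmetric in $L^2(\X,\nu)$. Taking $\varphi=\psi$ gives $\frac{1}{2}\int_\X\norm{C^{1/2}\D\psi}^2\,d\nu=-\int_\X(N_2\psi)\psi\,d\nu\le\norm{N_2\psi}_{L^2(\X,\nu)}\norm{\psi}_{L^2(\X,\nu)}$; applied to differences of a core approximation this proves $\Dom(N_2)\subseteq W_C^{1,2}(\X,\nu)$ with convergence of $C^{1/2}\D$ along the approximating sequence. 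The general identity then follows by a double approximation: for $\psi\in\Dom(N_2)$ pick $\psi_n\in\xi_A(\X)$ with $\psi_n\to\psi$ and $N_2\psi_n\to N_2\psi$ in $L^2(\X,\nu)$ (Theorem \ref{identif}), for $\varphi\in W_C^{1,2}(\X,\nu)$ pick $\varphi_m\in\xi_A(\X)$ converging to $\varphi$ in the graph norm of $C^{1/2}\D$, and pass to the limit first in $n$ and then in $m$ in the core identity.

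The main obstacle, running through all three parts, is the low regularity of $U$: it is only lower semicontinuous, and $\D U$ is available merely through $e^{-2U}\in W^{1,2}(\X,\mu)$ (so that $\D e^{-2U}=-2e^{-2U}\D U$ holds only in the Sobolev sense). Consequently the formal manipulations above -- the product rule $\D\rho=-2\rho\,\D U$, the transport of the integration-by-parts formula, and the identification of $v_z$ as an $L^2$ function -- must be justified by approximating $e^{-2U}$ by smooth cylindrical functions and passing to the limit, the integrability $\norm{x}^2e^{-2U}\in L^1(\X,\mu)$ being exactly what controls the resulting error terms. The closability argument in (2) and the upgrade from infinitesimal invariance to genuine invariance in (1) are the two further points that require care.
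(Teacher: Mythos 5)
The paper does not prove Proposition \ref{qua2}: it states it and defers to \cite[Sections 3--4]{DA-TU2} and \cite{DA-TU1}. Your outline reproduces the route taken there --- the Lyapunov identity $AQ_\infty+Q_\infty A^*=-C$ coming from Hypotheses \ref{Sobmu}, the symmetry of $L_0$ with respect to $\mu=\mathcal{N}(0,Q_\infty)$, the transport of the Gaussian integration-by-parts formula through the density $e^{-2U}$ (which is what gives closability of $C^{1/2}\D$ in $L^2(\X,\nu)$ and the form identity on $\xi_A(\X)$, the logarithmic derivative $v_z$ being in $L^2(\X,\nu)$ precisely because $\norm{x}^2e^{-2U}\in L^1(\X,\mu)$ and $C\D U=-F\in L^2(\X,\nu;\X)$ by \eqref{EU4.1}), and the extension to $\Dom(N_2)\times W^{1,2}_C(\X,\nu)$ via Theorem \ref{identif} together with the Cauchy estimate $\tfrac12\norm{C^{1/2}\D(\psi_n-\psi_m)}^2_{L^2(\X,\nu;\X)}\le\norm{N_0(\psi_n-\psi_m)}_{L^2(\X,\nu)}\norm{\psi_n-\psi_m}_{L^2(\X,\nu)}$. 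Parts (2) and (3) are essentially complete as you describe them; your worry about the low regularity of $U$ is largely absorbed by the standing hypotheses, since $F=-C\D U$ already satisfies Hypotheses \ref{EU2} and hence all the first-order terms you manipulate are controlled.

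The one place where your sketch hides the real work is the passage, in part (1), from the infinitesimal invariance $\int_\X N_0\varphi\, e^{-2U}d\mu=0$ for $\varphi\in\xi_A(\X)$ to genuine invariance of the candidate measure under $P(t)$. Infinitesimal invariance with respect to a class of test functions does not by itself imply invariance; the mechanism in \cite{DA-TU1,DA-TU2} is to prove that $N_0$ on $\xi_A(\X)$ is essentially m-dissipative (in fact essentially self-adjoint) in $L^2$ of the candidate measure --- this is where regularizations of the drift such as the $F_{\delta,s}$ of Subsection \ref{reF} enter --- so that its closure generates a Markov semigroup for which that measure is invariant, and then to identify this semigroup with $P(t)$ through the identity \eqref{sviE} on the core. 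Note that you cannot invoke Theorem \ref{identif} here without circularity, since its core property is stated in $L^2(\X,\nu)$ with $\nu$ the invariant measure whose form you are trying to determine; the m-dissipativity must be established with respect to the candidate measure itself. You correctly flag this step as requiring care, but ``the standard characterisation of invariant measures'' is the step, not a justification of it. Everything else in your plan is sound.
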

We refer to  \cite[Sections 3-4]{DA-TU2} or \cite{DA-TU1} for a proof. Similarly to \cite[Section 2]{DA-LU1}, we define the following space.
\begin{rmk}
By Hypotheses \ref{DIRI} $Q_\infty$ is positive, so $\mu$ is non-degenerate Gaussian measure. Hence $\nu$ is non-degenerate namely $\nu(\g)>0$ for any $\g\in\mathcal{B}(\X)$.
\end{rmk}
\begin{defi}\label{sobsob} $ $
 We denote by $\mathring{W}_{C}^{1,2}(\mathcal{O},\nu)$ the space of the functions $u:\mathcal{O}\longrightarrow \R$ such the extension $\widehat{u}:\X\ra \R$ defined by
\[
\widehat{u}(x)=
\begin{cases}
u(x) & x\in\g\\
0 & x\in\g^c
\end{cases}
\]
belongs to $W_{C}^{1,2}(\X,\nu)$.
\end{defi}

Now we define a quadratic form on $\mathring{W}_{C}^{1,2}(\mathcal{O},\nu)$.
\begin{defi}
We denote by $\mathcal{Q}_2^{\mathcal{O}}$ the quadratic form defined by
\begin{align*}
\mathcal{Q}_2^{\mathcal{O}}(\varphi,\psi)&:=
-\frac{1}{2}\int_{\X}\scal{C^{1/2}\D\widehat{\varphi}}{C^{1/2}\D\widehat{\psi}}d\nu,\qquad \varphi,\psi\in \mathring{W}_{C}^{1,2}(\mathcal{O},\nu).
\end{align*}
Moreover, we denote by $N^\g_2$ the self-adjoint and dissipative operator associated to $\mathcal{Q}_2^{\mathcal{O}}$, namely
\begin{align*}
\Dom(N^\g_2):=\{\varphi\in\mathring{W}_{C}^{1,2}(\mathcal{O},\nu)\; &:\; \exists \beta\in L^2(\g,\nu) \mbox{ s.t. } \int_{\g}\beta\psi d\nu=\mathcal{Q}_2^{\mathcal{O}}(\beta,\psi)\;\; \forall\psi\in \mathring{W}_{C}^{1,2}(\X,\nu) \},\\
&N^\g_2\varphi=\beta,\quad \varphi\in\Dom(N^\g_2).
\end{align*}
\end{defi}

In section \ref{ingO} we shall prove Theorem \ref{identifDiri} using the following idea. For $\lambda>0$ and $f\in L^2(\g,\nu)$, we consider the equation with unknown $\varphi\in \mathring{W}^{1,2}_C(\g,\nu)$,
\begin{equation}\label{pw}
\lambda\int_\g\varphi vd\nu-Q_2^\g(\varphi,v)=\int_\g f vd\nu,\quad v\in \mathring{W}^{1,2}_C(\g,\nu). 
\end{equation}

\noindent Since the quadratic form $-\mathcal{Q}_2^{\mathcal{O}}$ is continuous, nonnegative, coercive and symmetric, by the Lax--Milgram Theorem for every $\lambda>0$ and $f\in L^2(\mathcal{O}, \nu)$ there exists a unique solution $\varphi\in \mathring{W}_{C}^{1,2}(\mathcal{O},\nu)$ of (\ref{pw}). By definition of $N^\g_2$, for every $\lambda>0$ and $f\in L^2(\g,\nu)$,  we have 
\[
R(\lambda,N_2^\g)f=\varphi,
\]
where $\varphi$ is the unique solution of (\ref{pw}). In Subsection \ref{ingO}, we will prove that
\[
R(\lambda,M_2)=R(\lambda, N^\g_2),
\]
which yields Theorem \ref{identifDiri}.
\begin{rmk}
We stress that in the trivial case, where $\g=\X$, Theorem \ref{identifDiri} follows directly from Theorem \ref{identif} and Proposition \ref{qua2}.
\end{rmk}

\subsection{The approximating semigroups}
In this section we define and study the Feynman-Kac approximating semigroup  for the semigroup $P_2^\g$. For $\epsilon>0$ we define
\begin{enumerate}
\item the set 
\begin{equation}\label{Oep}
\g_\epsilon:=\{ x\in\g\; |\; d(x,\g^c)>\epsilon\};
\end{equation}
\item the function
\begin{equation}\label{Vep}
V_\epsilon(x):=\left(\frac{1}{\epsilon}d(x,\g_\epsilon)\right)\wedge 1,\quad x\in\X.
\end{equation}
We note that $V\in C_b(\X)$, $V\equiv 0$ on $\overline{\g_\epsilon}$ and $V\equiv 1$ on $\g^c$;
\item the semigroup
\[
P^\epsilon(t)\varphi(x)=\E\left[\varphi(X(t,x))e^{-\frac{1}{\epsilon}\int_0^tV_\epsilon(X(s,x))ds}\right],\quad \varphi\in B_b(\X),\; x\in\X,
\]
where $\mii{X}$ is the mild solution of the SPDE \eqref{eqFO}.
\end{enumerate}
First we prove that $P^{\epsilon}(t)$ is uniquely extendable to a strongly continuous semigroup in $L^2(\X,\nu)$.
\begin{lemm}\label{este}
For any $\varphi\in C_b(\X)$ we have
\[
\Vert P^{\epsilon}(t)\varphi\Vert_{L^2(\X,\nu)}\leq \Vert \varphi\Vert_{L^2(\X,\nu)}.
\]
\end{lemm}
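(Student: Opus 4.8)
The plan is to exploit that the Feynman--Kac weight takes values in $[0,1]$ and then to repeat, essentially verbatim, the contraction argument already used for $P^\g(t)$ in Proposition \ref{este2}. First I would record the elementary but crucial observation that, by \eqref{Vep}, the function $V_\epsilon$ takes values in $[0,1]$; in particular $V_\epsilon\geq 0$, so for every $x\in\X$ and $\mathbb{P}$-a.a. $\omega$ the multiplicative factor satisfies
\[
0\leq \exp\Big(-\tfrac{1}{\epsilon}\int_0^tV_\epsilon(X(s,x))ds\Big)\leq 1.
\]
Writing $M_t:=\exp(-\tfrac{1}{\epsilon}\int_0^tV_\epsilon(X(s,x))ds)$, this reads $0\le M_t\le 1$, and it is the only property of the weight I will use.

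Next I would bound $\abs{P^\epsilon(t)\varphi(x)}^2$ pointwise. Since $M_t=M_t^{1/2}M_t^{1/2}$, the Cauchy--Schwarz (H\"older) inequality gives
\[
\abs{P^\epsilon(t)\varphi(x)}^2=\big\vert\E[\varphi(X(t,x))M_t]\big\vert^2\leq \E\big[\varphi^2(X(t,x))M_t\big]\,\E[M_t].
\]
Using $M_t\leq 1$ twice---once to estimate $\E[M_t]\leq 1$ and once inside the first expectation---I obtain
\[
\abs{P^\epsilon(t)\varphi(x)}^2\leq \E\big[\varphi^2(X(t,x))\big]=P(t)(\varphi^2)(x),\qquad x\in\X.
\]

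Finally I would integrate this inequality over $\X$ against $\nu$ and invoke the invariance of $\nu$ for $P(t)$ established in Theorem \ref{invX}:
\[
\int_\X\abs{P^\epsilon(t)\varphi}^2\,d\nu\leq \int_\X P(t)(\varphi^2)\,d\nu=\int_\X\varphi^2\,d\nu,
\]
which is exactly $\norm{P^\epsilon(t)\varphi}_{L^2(\X,\nu)}\leq\norm{\varphi}_{L^2(\X,\nu)}$. I do not expect any serious obstacle: the argument mirrors that of Proposition \ref{este2}, with the indicator $\mathbb{I}_{\{\tau_x>t\}}$ replaced by the bounded weight $M_t$. The only points needing a word of care are the finiteness of the expectations (immediate, since $\varphi\in C_b(\X)$ and $0\le M_t\le 1$) and the measurability of $(s,\omega)\mapsto V_\epsilon(X(s,x))$, which follows from $V_\epsilon\in C_b(\X)$ together with the continuity of the trajectories of $X(\cdot,x)$.
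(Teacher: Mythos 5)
Your proof is correct and follows essentially the same route as the paper: a pointwise Cauchy--Schwarz bound using that the Feynman--Kac weight lies in $[0,1]$, giving $\abs{P^\epsilon(t)\varphi}^2\leq P(t)(\varphi^2)$, followed by integration against $\nu$ and its invariance for $P(t)$. The only cosmetic difference is that you split the weight as $M_t^{1/2}M_t^{1/2}$ whereas the paper keeps $M_t^2$ inside the expectation before bounding it by $1$; the two are equivalent.
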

\begin{proof}
By the H\"older inequality and the fact that $V$ is nonnegative on $\X$,
\[
\vert P^{\epsilon}(t)\varphi(x)\vert^2\leq \mathbb{E}[\varphi^2(X(t,x))e^{-\frac{2}{\epsilon}\int_{\X}V_\epsilon(X(s,x))ds}]\leq P(t)(\varphi^2)(x),\quad x\in\X.
\]
Hence, since $\nu$ is invariant for $P(t)$, we have
\[
\int_{\X}\vert P^{\epsilon}(t)\varphi(x)\vert^2\nu(dx)\leq\int_{\X}P(t)(\varphi^2)(x)\nu(dx)\leq\int_{\X}\varphi^2(x)\nu(dx).
\]
\end{proof}
\noindent By Lemma \eqref{este}, the semigroup $P^{\epsilon}(t)$ is uniquely extendable in $L^2(X,\nu)$ to a strongly continuous and contraction semigroup $P_2^{\epsilon}(t)$. We denote by $N^{\epsilon}_2$ its infinitesimal generator. We recall that $N_2$ is both the closure of the operator $N_0$ in $L^2(\X,\nu)$ and the infinitesimal generator of $P_2(t)$ (see Subsection \ref{core}).
\begin{prop}\label{peps}$ $
Let $\lambda>0$ and $f\in L^2(\X,\nu)$. Then the equation
\begin{equation}\label{phie}
\lambda\varphi_{\epsilon}-N_2\varphi_{\epsilon}+\frac{1}{\epsilon}V_\epsilon\varphi_{\epsilon}=f
\end{equation}
has a unique solution $\varphi_{\epsilon}\in \Dom(N_2)$. Moreover the following estimates are verified
\begin{equation}\label{e1}
\Vert\varphi_{\epsilon}\Vert^2_{L^2(\X,\nu)}\leq\frac{1}{\lambda^2}\Vert f\Vert^2_{L^2(\X,\nu)}.
\end{equation}
\begin{equation}\label{e2}
\Vert C\D\varphi_{\epsilon}\Vert^2_{L^2(\X,\nu,\X)}\leq\frac{2}{\lambda}\Vert f\Vert_{L^2(\X,\nu)}.
\end{equation}
\begin{equation}\label{e3}
\int_{\g_\epsilon^c}V_\epsilon\varphi^2_{\epsilon}d\nu\leq\frac{\epsilon}{\lambda}\Vert f\Vert^2_{L^2(\X,\nu)}.
\end{equation}
\end{prop}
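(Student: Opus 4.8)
The plan is to read \eqref{phie} as the resolvent equation for the bounded perturbation $N_2-\frac{1}{\epsilon}V_\epsilon$ of the generator $N_2$, and then to extract all three estimates from a single energy identity obtained by testing the equation against $\varphi_\epsilon$.

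For existence and uniqueness, first I would note that multiplication by $\frac{1}{\epsilon}V_\epsilon$ is a bounded operator on $L^2(\X,\nu)$ (since $V_\epsilon\in C_b(\X)$ with $0\le V_\epsilon\le 1$) and that $-\frac{1}{\epsilon}V_\epsilon$ is dissipative, because $\scal{-\frac{1}{\epsilon}V_\epsilon u}{u}=-\frac{1}{\epsilon}\int_\X V_\epsilon u^2\,d\nu\le 0$. Since $N_2$ is m-dissipative (it generates the strongly continuous contraction semigroup $P_2(t)$, so by Lumer--Phillips $\Ran(\lambda\Id-N_2)=L^2(\X,\nu)$ for every $\lambda>0$), the operator $N_2-\frac{1}{\epsilon}V_\epsilon$ with domain $\Dom(N_2)$ is again m-dissipative: dissipativity is additive, and surjectivity of $\lambda\Id-(N_2-\frac{1}{\epsilon}V_\epsilon)$ follows by factoring it as $(\lambda\Id-N_2)\big(\Id+R(\lambda,N_2)\tfrac{1}{\epsilon}V_\epsilon\big)$ and observing that, for $\lambda>1/\epsilon$, the second factor is invertible by a Neumann series since $\norm{R(\lambda,N_2)\tfrac{1}{\epsilon}V_\epsilon}\le\frac{1}{\lambda\epsilon}<1$; m-dissipativity then upgrades this to invertibility for every $\lambda>0$. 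Hence \eqref{phie} has a unique solution $\varphi_\epsilon\in\Dom(N_2)$.

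The heart of the estimates is the energy identity. Because $\varphi_\epsilon\in\Dom(N_2)$, Proposition~\ref{qua2} applies with $\varphi=\psi=\varphi_\epsilon$ and gives $\int_\X(N_2\varphi_\epsilon)\varphi_\epsilon\,d\nu=-\frac{1}{2}\norm{C^{1/2}\D\varphi_\epsilon}^2_{L^2(\X,\nu,\X)}$. Multiplying \eqref{phie} by $\varphi_\epsilon$ and integrating against $\nu$ I obtain
\[
\lambda\norm{\varphi_\epsilon}^2_{L^2(\X,\nu)}+\frac{1}{2}\norm{C^{1/2}\D\varphi_\epsilon}^2_{L^2(\X,\nu,\X)}+\frac{1}{\epsilon}\int_\X V_\epsilon\varphi_\epsilon^2\,d\nu=\int_\X f\varphi_\epsilon\,d\nu,
\]
where all three left-hand terms are non-negative. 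Bounding the right-hand side by $\norm{f}_{L^2(\X,\nu)}\norm{\varphi_\epsilon}_{L^2(\X,\nu)}$ via Cauchy--Schwarz and discarding the last two left-hand terms yields $\lambda\norm{\varphi_\epsilon}\le\norm{f}$, i.e.\ \eqref{e1}. Feeding \eqref{e1} back in bounds the right-hand side by $\frac{1}{\lambda}\norm{f}^2_{L^2(\X,\nu)}$; discarding the first and third terms then produces the gradient bound \eqref{e2}, while discarding the first two terms gives $\frac{1}{\epsilon}\int_\X V_\epsilon\varphi_\epsilon^2\,d\nu\le\frac{1}{\lambda}\norm{f}^2_{L^2(\X,\nu)}$, and since $V_\epsilon\ge 0$ the integral over $\g_\epsilon^c$ is dominated by the one over $\X$, yielding \eqref{e3}.

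I expect the only genuinely delicate point to be the legitimacy of the energy identity: it presupposes $\Dom(N_2)\subseteq W_C^{1,2}(\X,\nu)$ so that $C^{1/2}\D\varphi_\epsilon$ is meaningful and the integration-by-parts against $\nu$ is valid. This is precisely what Proposition~\ref{qua2} supplies, so no additional argument is needed; everything else is the standard Lumer--Phillips/bounded-perturbation machinery together with Cauchy--Schwarz.
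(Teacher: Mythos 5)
Your proposal is correct and follows essentially the same route as the paper: m-dissipativity of the bounded, dissipative perturbation $N_2-\tfrac{1}{\epsilon}V_\epsilon$ for existence/uniqueness, then the energy identity obtained by testing \eqref{phie} against $\varphi_\epsilon$ and invoking Proposition~\ref{qua2} to extract \eqref{e1}--\eqref{e3}. The only (harmless) differences are that you spell out the Neumann-series argument for surjectivity, which the paper leaves implicit, and you derive \eqref{e1} from the energy identity rather than directly from the resolvent bound of the maximal dissipative operator.
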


\begin{proof}
By Proposition \ref{qua2}, $N_2$ is maximal dissipative. Let $G:L^2(\X,\nu)\ra L^2(\X,\nu)$ be the operator defined by
\[
G\varphi:=\frac{1}{\epsilon}V_\epsilon\cdot\varphi,
\] 
then $-G$ is dissipative. So the operator $K:\Dom(N_2)\ra L^2(\X,\nu)$, defined by
\[
K\varphi :=N_2\varphi-G\varphi
\]
is maximal dissipative. Therefore \eqref{phie} has a unique solution $\varphi_{\epsilon}\in \Dom(N_2)$ and \eqref{e1} is verified. Multiplying both sides of (\ref{phie}) by $\varphi_{\epsilon}$, integrating over $\X$, and taking into account \eqref{qua2}, we obtain
\[
\lambda\Vert\varphi_{\epsilon}\Vert^2_{L^2(\X,\nu)}+\frac{1}{2}\Vert C^{1/2}\D\varphi_{\epsilon}\Vert^2_{L^2(\X,\nu)}+\frac{1}{\epsilon}\int_{\g_\epsilon^c}V_\epsilon\varphi^2_{\epsilon}d\nu=\int_{\X}f\varphi_{\epsilon}d\nu.
\]
By the H\"older inequality $\int_{\X}\vert f\varphi_{\epsilon}\vert d\nu\leq \Vert f\Vert_{L^2(\X,\nu)} \Vert \varphi_{\epsilon}\Vert_{L^2(\X,\nu)}$ and, by estimate (\ref{e1}), we obtain $\int_{\X}\vert f\varphi_{\epsilon}\vert d\nu\leq\frac{1}{\lambda}\Vert f\Vert^2_{L^2(\X,\nu)}$. Then 
\[
\frac{1}{2}\Vert C^{1/2}\D\varphi_{\epsilon}\Vert^2_{L^2(\X,\nu)}+\frac{1}{\epsilon}\int_{\g_\epsilon^c}V_\epsilon\varphi^2_{\epsilon}d\nu\leq \frac{1}{\lambda}\Vert f\Vert^2_{L^2(\X,\nu)}.
\]
which yields (\ref{e2}) and (\ref{e3}).
\end{proof}
\noindent Now we characterize  $N_2^{\epsilon}$. 
\begin{prop}\label{giap}$ $
For any $\epsilon>0$, we have $\Dom(N_2^{\epsilon})=\Dom(N_2)$ and
\begin{equation}\label{gie}
N_2^{\epsilon}\varphi=N_2\varphi-\frac{1}{\epsilon}V_\epsilon\varphi,\quad\forall\varphi\in \Dom(N_2).
\end{equation}
\end{prop}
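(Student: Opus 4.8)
The plan is to realize $P_2^{\epsilon}(t)$ as a bounded (Feynman--Kac) perturbation of $P_2(t)$ and to identify its generator with the operator $K:=N_2-\tfrac1\epsilon V_\epsilon$ already analysed in Proposition \ref{peps}. Since $V_\epsilon\in C_b(\X)$ with $0\le V_\epsilon\le 1$, multiplication by $B:=\tfrac1\epsilon V_\epsilon$ is a bounded operator on $L^2(\X,\nu)$ with $\norm{B}\le 1/\epsilon$. Proposition \ref{peps} shows that $K=N_2-B$, with $\Dom(K)=\Dom(N_2)$, is m-dissipative; hence by the Lumer--Phillips theorem it generates a strongly continuous contraction semigroup $e^{tK}$ on $L^2(\X,\nu)$. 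Thus it suffices to prove $P_2^{\epsilon}(t)=e^{tK}$ for every $t\ge 0$, which immediately gives $N_2^{\epsilon}=K$ and the two claimed identities.

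The key step is a Duhamel (Feynman--Kac) identity for $P^{\epsilon}$. Writing $M(t):=\exp(-\tfrac1\epsilon\int_0^tV_\epsilon(X(s,x))ds)$, so that $M(t)=1-\tfrac1\epsilon\int_0^tV_\epsilon(X(r,x))M(r)\,dr$, and substituting this into $P^{\epsilon}(t)\varphi(x)=\E[\varphi(X(t,x))M(t)]$, Fubini (all integrands are bounded by $\norm{\varphi}_\infty$) gives
\begin{equation*}
P^{\epsilon}(t)\varphi(x)=P(t)\varphi(x)-\frac1\epsilon\int_0^t\E\big[\varphi(X(t,x))V_\epsilon(X(r,x))M(r)\big]\,dr.
\end{equation*}
Conditioning on $\mathcal F_r$ and using the Markov property of the generalized mild solution (the transition function of $P(t)$ exists, see the discussion before Theorem \ref{invX}), together with the $\mathcal F_r$-measurability of $M(r)$ and of $V_\epsilon(X(r,x))$, one has $\E[\varphi(X(t,x))\mid\mathcal F_r]=(P(t-r)\varphi)(X(r,x))$, whence
\begin{equation*}
\E\big[\varphi(X(t,x))V_\epsilon(X(r,x))M(r)\big]=P^{\epsilon}(r)\big(V_\epsilon\,P(t-r)\varphi\big)(x).
\end{equation*}
After the change of variable $s=t-r$ this yields, for $\varphi\in C_b(\X)$,
\begin{equation*}
P^{\epsilon}(t)\varphi=P(t)\varphi-\int_0^tP^{\epsilon}(t-s)\big(B\,P(s)\varphi\big)\,ds .
\end{equation*}

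To pass to $L^2(\X,\nu)$ I would note that for fixed $t$ both sides are bounded operators on $L^2(\X,\nu)$: the convolution integral is a Bochner integral whose integrand is norm-bounded by $\tfrac1\epsilon\norm{\varphi}_{L^2(\X,\nu)}$ and strongly continuous in $s$, so it depends continuously on $\varphi$; since $C_b(\X)$ is dense in $L^2(\X,\nu)$ and $P_2^{\epsilon}(t),P_2(t)$ are contractions, the identity extends to all $\varphi\in L^2(\X,\nu)$. The displayed equation is exactly the variation-of-constants equation characterizing the bounded perturbation $e^{tK}=e^{t(N_2-B)}$ of $e^{tN_2}=P_2(t)$; by the standard uniqueness (a Gronwall argument) of strongly continuous solutions of this integral equation, $P_2^{\epsilon}(t)=e^{tK}$ for all $t\ge0$. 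Equivalently, one may take Laplace transforms of the Duhamel identity and use the convolution theorem to obtain the resolvent identity $R(\lambda,N_2^{\epsilon})(\Id+BR(\lambda,N_2))=R(\lambda,N_2)$, which for $\lambda>1/\epsilon$ (so that $\norm{BR(\lambda,N_2)}\le\tfrac1{\epsilon\lambda}<1$) forces $R(\lambda,N_2^{\epsilon})=R(\lambda,K)$ and hence $N_2^{\epsilon}=K$.

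The main obstacle is the rigorous justification of the Feynman--Kac/Duhamel identity and its transfer to $L^2(\X,\nu)$: one must invoke the Markov property for the generalized mild solution (available through the transition function constructed before Theorem \ref{invX}), justify the Fubini and conditioning steps (harmless here because $0\le M(r)\le1$ and $0\le V_\epsilon\le1$), and check the Bochner-integrability needed for the density extension. Everything downstream---the uniqueness for the variation-of-constants equation and the identification of the generator---is then routine given Proposition \ref{peps}.
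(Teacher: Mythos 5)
Your argument is correct, but it is a genuinely different route from the one in the paper. The paper proves the inclusion $\Dom(N_2)\subseteq\Dom(N_2^{\epsilon})$ by computing the difference quotient $h^{-1}(P_2^{\epsilon}(h)\varphi-\varphi)$ directly: for $\varphi\in\xi_A(\X)$ the Feynman--Kac correction term is handled pathwise (using $0\le V_\epsilon\le 1$ and continuity of trajectories) and dominated convergence, and the result is then extended to all of $\Dom(N_2)$ by invoking Theorem \ref{identif}, i.e.\ the fact that $\xi_A(\X)$ is a core; the reverse inclusion is obtained from the unique solvability of \eqref{phie} in Proposition \ref{peps} via a resolvent argument. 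You instead establish the global Duhamel identity $P^{\epsilon}(t)\varphi=P(t)\varphi-\int_0^tP^{\epsilon}(t-s)(\epsilon^{-1}V_\epsilon P(s)\varphi)\,ds$ by conditioning on $\mathcal{F}_r$, and then identify $P_2^{\epsilon}(t)$ with the bounded-perturbation semigroup generated by $N_2-\epsilon^{-1}V_\epsilon$, either by uniqueness for the variation-of-constants equation or by Laplace transform and the resolvent identity (your norm estimate $\norm{BR(\lambda,N_2)}\le(\epsilon\lambda)^{-1}<1$ is correct since $P_2(t)$ is a contraction semigroup). Both inclusions then drop out at once. What your approach buys: it does not use the core property of $\xi_A(\X)$ (so it is independent of Theorem \ref{identif}, which the paper's proof needs for the extension step), and it packages the generator identification into standard bounded-perturbation theory. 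What it costs: you must explicitly invoke and justify the Markov property $\E[\varphi(X(t,x))\mid\mathcal F_r]=(P(t-r)\varphi)(X(r,x))$ for the generalized mild solution and carry out the Fubini/conditioning and the $C_b\to L^2$ transfer, none of which the paper's more elementary pointwise computation requires. Since the transition function is available (via the discussion preceding Theorem \ref{invX} and \cite[Proposition 9.14]{DA-ZA4}) and all integrands are bounded by $\norm{\varphi}_\infty$, these steps go through, so I see no gap.
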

\begin{proof}
First we prove that $\Dom(N_2)\subset \Dom(N_2^{\epsilon})$. We begin to show that $\xi_A(\X)\subset \Dom(N_2^{\epsilon})$. Let $\varphi\in\xi_A(\X)$. For any $x\in \X$ and $h>0$, we have
\begin{equation}\label{semig}
P_2^{\epsilon}(h)\varphi(x)-\varphi(x)=P_2(h)\varphi(x)-\varphi(x)+\mathbb{E}[(e^{-\frac{1}{\epsilon}\int_0^hV_\epsilon(X(s,x))ds}-1)\varphi(X(h,x))].
\end{equation}
Dividing both sides of (\ref{semig}) by $h>0$ we obtain
\[
\dfrac{P_2^{\epsilon}(h)\varphi(x)-\varphi(x)}{h}=\dfrac{P_2(h)\varphi(x)-\varphi(x)+\mathbb{E}[(e^{-\frac{1}{\epsilon}\int_0^hV_\epsilon(X(s,x))ds}-1)\varphi(X(h,x))]}{h}.
\]
By Theorem \ref{identif}, we know that
\begin{equation}\label{x1}
\lim_{h\rightarrow 0}\dfrac{P_2(h)\varphi-\varphi}{h}=N_2\varphi,\quad \mbox{in } L^2(\X,\nu)
\end{equation}
Since the generalized mild solution $\mi{X}$ is a $\X$-valued continuous process (see Theorem \ref{solMild} and Definition \ref{spacep}), then the functions $r\longrightarrow V_\epsilon(X(r,x))$ and $r\longrightarrow \varphi(X(r,x))$ are paths continuous, and so, recalling that $V_\epsilon\in C_b(\X)$, for any $x\in\X$ we have 
\[
\lim_{h\rightarrow 0}\dfrac{\mathbb{E}[(e^{-\frac{1}{\epsilon}\int_0^hV_\epsilon(X(s,x))ds}-1)\varphi(X(h,x))]}{h}=-\frac{1}{\epsilon}V_\epsilon(x)\varphi(x).
\]
Hence by the Dominated Convergence theorem it follows that
\begin{equation}\label{x2}
\lim_{h\rightarrow 0}\dfrac{\mathbb{E}[(e^{-\frac{1}{\epsilon}\int_0^hV_\epsilon(X(s,\cdot))ds}-1)\varphi(X(h,\cdot))]}{h}=-\frac{1}{\epsilon}V_\epsilon(\cdot)\varphi(\cdot),\quad \mbox{in } L^2(\X,\nu).
\end{equation}
So by \eqref{x1} and \eqref{x2} we obtain
\[
  N_2^{\epsilon}\varphi=\lim_{h\rightarrow 0}\frac{P^{\epsilon}(h)\varphi-\varphi}{h}=N_2\varphi-\frac{1}{\epsilon}V_\epsilon\varphi,\quad in\;\;L^2(\X,\nu).
\]
Then, for any $\varphi\in \xi_A(\X)$, we have $\varphi\in D(N_2^{\epsilon})$ and
\[
N_2^{\epsilon}\varphi=N_2\varphi-\frac{1}{\epsilon}V_\epsilon\varphi.
\]
Let now $\varphi\in \Dom(N_2)$. By Theorem \ref{identif}, $\xi_A(\X)$ is a core for $N_2$, so we can take a sequence $(\varphi_n)_{n\in\N}\subset\xi_A(\X)$ such that 
\[
\lim_{n\rightarrow+\infty}\varphi_n=\varphi,\;\;\lim_{n\rightarrow+\infty}N_2\varphi_n=N_2\varphi,\quad in\;\;L^2(\X,\nu).
\]
Since $V_\epsilon$ is bounded, we have
\[
\lim_{n\rightarrow+\infty}\frac{1}{\epsilon}V_\epsilon\varphi_n=\frac{1}{\epsilon}V_\epsilon\varphi \quad \mbox{in } L^2(\X,\nu).
\]
Hence
\begin{align*}
\lim_{n\rightarrow+\infty}N_2^{\epsilon}\varphi_n=\lim_{n\rightarrow+\infty}N_2\varphi_n-\frac{1}{\epsilon}V_\epsilon\varphi_n=N_2\varphi-\frac{1}{\epsilon}V_\epsilon\varphi,\quad \mbox{in } L^2(\X,\nu).
\end{align*}
Then, for any $\varphi\in \Dom(N_2)$, we have $\varphi\in \Dom(N_2^{\epsilon})$ and \eqref{gie} holds.

Finally we prove that $\Dom(N_2^{\epsilon})\subset \Dom(N_2)$. For any $\varphi\in \Dom(N_2^{\epsilon})$, let $\varphi_\epsilon$ be the unique solution of \eqref{phie} with $f=\lambda\varphi_\epsilon-N_2^{\epsilon}\varphi_\epsilon$. Then, by Proposition \ref{peps}, $\varphi_{\epsilon}\in \Dom(N_2)\subset \Dom(N_2^{\epsilon})$. Moreover $R(\lambda,N_2^{\epsilon})f=\varphi_{\epsilon}=\varphi$ and this concludes the proof.
\end{proof}
Finally we prove that the semigroups $P_2^\epsilon(t)$ approximate $P_2^{\g}(t)$ in $L^2(\g,\nu)$.
\begin{prop}\label{binbin}
For any $f\in L^2(\g,\nu)$ and $t>0$, we have
\begin{equation}\label{p1}
\lim_{\epsilon\rightarrow 0}(P_2^{\epsilon}(t)\widehat{f})_{|\g}=P_2^{\g}(t)f\quad in\;\; L^2(\g,\nu),
\end{equation} and, for any $\lambda>0$,
\begin{equation}\label{p2}
\lim_{\epsilon\rightarrow 0}(R(\lambda,N_2^{\epsilon})\widehat{f})_{|\g}=R(\lambda,M_2)f \quad in\;\; L^2(\g,\nu),
\end{equation}
where $\widehat{f}$ is defined in Definition \ref{sobsob}.
\end{prop}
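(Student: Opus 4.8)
The plan is to establish the semigroup convergence \eqref{p1} first, by a pathwise Feynman--Kac argument, and then to deduce the resolvent convergence \eqref{p2} from it by taking Laplace transforms. Since $M_2$ generates the strongly continuous contraction semigroup $P_2^\g(t)$ and $N_2^\epsilon$ generates $P_2^\epsilon(t)$, for every $\lambda>0$ one has the Bochner representations
\[
R(\lambda,N_2^\epsilon)\widehat f=\int_0^{+\infty}e^{-\lambda t}P_2^\epsilon(t)\widehat f\,dt,\qquad R(\lambda,M_2)f=\int_0^{+\infty}e^{-\lambda t}P_2^\g(t)f\,dt.
\]
Granting \eqref{p1}, restricting to $\g$ and subtracting gives
\[
\norm{(R(\lambda,N_2^\epsilon)\widehat f)_{|\g}-R(\lambda,M_2)f}_{L^2(\g,\nu)}\leq\int_0^{+\infty}e^{-\lambda t}\norm{(P_2^\epsilon(t)\widehat f)_{|\g}-P_2^\g(t)f}_{L^2(\g,\nu)}\,dt.
\]
The integrand tends to $0$ for each $t$ by \eqref{p1} and is dominated by $2e^{-\lambda t}\norm{f}_{L^2(\g,\nu)}$, thanks to the contractivity of both semigroups (Lemma \ref{este} and Proposition \ref{este2}); dominated convergence then yields \eqref{p2}.

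It therefore remains to prove \eqref{p1}. By density of the bounded functions in $L^2(\g,\nu)$, together with the uniform $L^2(\g,\nu)$-contractivity of $(P_2^\epsilon(t))_\epsilon$ and of $P_2^\g(t)$, a standard density argument reduces the claim to the case $f\in C_b(\g)$. For such $f$ I would show that, for $\nu$-a.e.\ $x\in\g$,
\[
\lim_{\eps\ra0}e^{-\frac1\eps\int_0^tV_\eps(X(s,x))\,ds}=\mathbb{I}_{\{\tau_x>t\}},\qquad \qc
\]
and then integrate in $\omega$ and in $x$ by dominated convergence (both dominations being immediate, since $f$ and the multiplicative functional are bounded and $\nu$ is a probability measure). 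The pointwise statement splits into two cases for a fixed trajectory $s\mapsto X(s,x)(\omega)$. On $\{\tau_x>t\}$ the path is continuous and stays in $\g$ on $[0,t]$, so its image is a compact subset of $\g$ lying at positive distance $\rho>0$ from $\g^c$; for $\eps<\rho$ the path is contained in $\g_\eps$, where $V_\eps\equiv0$, so the exponent vanishes and the functional equals $1$. On $\{\tau_x<t\}$ the path reaches $\g^c$ before time $t$, and one must show that the exponent $\tfrac1\eps\int_0^tV_\eps(X(s,x))\,ds$ diverges, so that the functional tends to $0$.

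The main obstacle is precisely this divergence on $\{\tau_x<t\}$. When, after time $\tau_x$, the path enters the interior of $\g^c$, continuity forces it to spend a whole subinterval of $[0,t]$ in the region $\{V_\eps\equiv1\}$, so the exponent is bounded below by a positive multiple of $1/\eps$ and the functional tends to $0$; the delicate case is that of trajectories which only graze $\partial\g$ and immediately re-enter $\g$, for which the Lebesgue time spent where $V_\eps$ is bounded below may be negligible. I would handle this exactly as in \cite[Section 3]{DA-LU1}, using the non-degeneracy of the noise ($\mathrm{Ker}(C)=\{0\}$, Hypotheses \ref{Sobmu}) to discard, up to a $\mathbb{P}$-null set, the trajectories that touch $\partial\g$ without crossing it, and to guarantee $\mathbb{P}(\tau_x=t)=0$ for $\nu$-a.e.\ $x$, so that the limiting functional coincides $\qc$ with $\mathbb{I}_{\{\tau_x>t\}}$. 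Passing to the limit then gives $P_2^\eps(t)\widehat f(x)\ra P^\g(t)f(x)$ for $\nu$-a.e.\ $x\in\g$, and a final dominated convergence in $x$ upgrades this to convergence in $L^2(\g,\nu)$, establishing \eqref{p1} and hence, through the first paragraph, the proposition.
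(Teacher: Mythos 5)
Your proposal reproduces the paper's proof essentially step for step: the Laplace-transform reduction of \eqref{p2} to \eqref{p1} with dominated convergence and contractivity, the density reduction to $f\in C_b$, the splitting of $\Omega$ into $\{\tau_x>t\}$ and its complement, and the two applications of dominated convergence, so the approach is the same. The one step you explicitly defer to \cite{DA-LU1} --- the divergence of $\frac1\eps\int_0^tV_\eps(X(s,x))\,ds$ on $\{\tau_x\le t\}$ for trajectories that only graze $\partial\g$ --- is exactly the step the paper settles by asserting a $\delta(\omega)>0$ with $V_\eps(X(s,x))\ge\tfrac12$ on $[s_0-\delta,s_0]$; since $V_\eps(y)\ge\tfrac12$ forces $d(y,\g^c)\le\eps/2$ for $y\in\g$, that $\delta$ in truth depends on $\eps$ and shrinks with it unless the path spends positive Lebesgue time in $\g^c$, so your caution identifies the genuinely delicate point rather than omitting something the paper supplies.
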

\begin{proof}
We split the proof in two steps. As a first step we prove that for any $\varphi\in C_b(\X)$ we have
\begin{equation}\label{c1}
\lim_{\epsilon\rightarrow 0}\Vert P_2^{\epsilon}(t)\varphi - P_2^{\g}(t)(\varphi_{|\g})\Vert_{L^2(\g,\nu)}=0.
\end{equation}
And as a second step we prove the statement of Proposition.\\
\noindent\emph{Step 1.} Let $\varphi\in C_b(\X)$. First of all we prove that
\begin{equation}\label{convpun}
\lim_{\epsilon\rightarrow 0}P_2^{\epsilon}(t)\varphi(x)=P_2^{\g}(t)(\varphi_{|\g})(x)\quad x\in\g, t>0.
\end{equation}
Fixed $x\in \g$ we consider the stopping time $\tau_x$ defined in (\ref{ta}). Let $t>0$; we define the sets
\begin{align*}
&\Omega_1=\lbrace \tau_x> t\rbrace=\lbrace w\in\Omega\;|\; X(s,x)(w)\in\g,\;\forall s\in[0,t)\rbrace,\\
&\Omega_2=\lbrace \tau_x\leq t\rbrace=\lbrace w\in\Omega\;|\;\exists s_0\in(0,t]\;|\; X(s_0,x)(w)\in\g^c\rbrace.
\end{align*}
Clearly $\Omega=\Omega_1\cup\Omega_2$ and $\Omega_1,\Omega_2$ are disjoint. Fixed $x\in\g$, we have
\begin{equation}\label{disgiu}
P_2^{\epsilon}(t)\varphi(x)=\int_{\Omega_1}\varphi(X(t,x)) e^{-\frac{1}{\epsilon}\int_0^tV_\epsilon(X(s,x))ds} d\mathbb{P}+\int_{\Omega_2}\varphi(X(t,x))e^{-\frac{1}{\epsilon}\int_0^tV_\epsilon(X(s,x))ds}d\mathbb{P}.
\end{equation}
We study separately the two summands in the right hand side of \eqref{disgiu}. On $\Omega_1$, $X(s,x)\in\g$, for any $s\in [0,t)$, and then, by definition of $V_\epsilon$ (see \ref{Vep}), there exist $\epsilon_0>0$, such that
\[
V_\epsilon(X(s,x))=0, \quad \forall\epsilon<\epsilon_0,\; \forall s\in [0,t).
\]
So for the first summand of equation \eqref{disgiu}, we have
\begin{equation}\label{cip10}
\lim_{\epsilon\rightarrow 0}\int_{\Omega_1}\varphi(X(t,x)) e^{-\frac{1}{\epsilon}\int_0^tV_\epsilon(X(s,x))ds} d\mathbb{P}=\int_{\Omega_1}\varphi(X(t,x)) d\mathbb{P}.
\end{equation}
On $\Omega_2$, by the fact that the generalized mild solution $\mi{X}$ is a $\X$-valued continuous process (see Theorem \ref{solMild} and Definition \ref{spacep}), we know that, for $\mathbb{P}$-a.a. (almost all) $w\in \Omega_2$, there exists $s_0(w)\in (0,t]$ such that 
\[
X(s_0(w),x)(w)\in \partial\g,
\]
where $\partial\g$ is the boundary of $\g$. Then by definition of $V_\epsilon$, there exists $\delta(w)>0$ such that 
\[
V_\epsilon(X(s,x))(w)\geq \frac{1}{2},\quad \forall s\in [s_0(w)-\delta(w),s_0(w)].
\]
So, for the second summand of equation \eqref{disgiu}, for $\mathbb{P}$-a.a. $w\in\Omega_2$, we have 
\begin{equation}\label{cip11}
\lim_{\epsilon\rightarrow 0}e^{-\frac{1}{\epsilon}\int_0^tV_\epsilon(X(s,x))(w)ds}\leq \lim_{\epsilon\rightarrow 0}e^{-\frac{\delta(w)}{2\epsilon}}=0.
\end{equation}
Therefore by \eqref{cip11} and the Dominated Convergence theorem, we have
\begin{equation}\label{cip12}
\lim_{\epsilon\rightarrow 0}\int_{\Omega_2}\varphi(X(t,x))e^{-\frac{1}{\epsilon}\int_0^tV_\epsilon(X(s,x))ds}d\mathbb{P}=0.
\end{equation}
Hence, \eqref{cip10} and \eqref{cip12} yield \eqref{convpun}.
Moreover, for each $x\in \g$, we have\\ $\vert P_2^{\epsilon}(t)(\widehat{\varphi})(x)\vert, \vert P_2^{\g}(t)(\varphi_{|\g})(x)\vert\leq \Vert \varphi\Vert_{\infty}$. Then, by \eqref{convpun} and the Dominated Convergence theorem, \eqref{c1} is verified.

\noindent\emph{Step 2.} Let $f\in L^2(\g,\nu)$; we prove \eqref{p1} . We recall that $C_b(\X)$ is dense in $ L^2(\X,\nu)$, so there exists a sequence $(f_n)\subset C_b(\X)$ such that, for any large $n\in\N$,
\[
\Vert \widehat{f}-f_n\Vert_{L^2(\X,\nu)}\leq \frac{1}{n}.
\]
In particular
\begin{equation}\label{c2}
\Vert f-f_{n|\g}\Vert_{L^2(\g,\nu)}=\Vert \widehat{f}-f_n\Vert_{L^2(\X,\nu)}\leq \frac{1}{n}.
\end{equation}
Therefore
\begin{align*}
\Vert P_2^{\epsilon}(t)\widehat{f}-P_2^{\g}(t)f\Vert_{L^2(\g,\nu)}\leq \Vert P_2^{\g}(t)(f-f_{n|\g})\Vert_{L^2(\g,\nu)}+&\Vert P_2^{\epsilon}(t)(\widehat{f}-f_n)\Vert_{L^2(\g,\nu)}\\+\Vert P_2^{\epsilon}(t)f_n-P_2^{\g}(t)f_{n|\g}\Vert_{L^2(\g,\nu)}.
\end{align*}
By Lemma \ref{este} and Proposition \ref{este2}, we have
\[
\Vert P_2^{\epsilon}(t)\widehat{f}-P_2^{\g}(t)f\Vert_{L^2(\g,\nu)}\leq \Vert f-f_{n|\g}\Vert_{L^2(\g,\nu)}+\Vert \widehat{f}-f_n\Vert_{L^2(\g,\nu)}+\Vert P_2^{\epsilon}(t)f_n-P_2^{\g}(t)f_{n|\g}\Vert_{L^2(\g,\nu)}.
\]
Letting $\epsilon\rightarrow 0$ and $n\rightarrow +\infty$, the first and the second summand go to zero by (\ref{c2}), and the third summand goes to zero by $Step  \;1$. We recall the following identity in $L^2(\X,\nu)$
\[
R(\lambda,N_2^\epsilon)\widehat{f}=\int_0^{+\infty}e^{-\lambda t}P_2^{\epsilon}(t)\widehat{f} dt,
\]
taking the restriction to $\g$ of both sides and using \eqref{p1} we obtain (\ref{p2}).
\end{proof}
\subsection{Proof of Theorem \ref{identifDiri}}\label{ingO}
Finally we prove Theorem \ref{identifDiri}.

\begin{proof}[Proof of Theorem \ref{identifDiri}]
First we prove that $\varphi\in \mathring{W}^{1,2}_{C}(\g,\nu)$. For $\epsilon>0$, we set 
\[
\varphi_{\epsilon}=R(\lambda,N_2^{\epsilon})\widehat{f}.
\]
By Proposition \ref{giap}, $\varphi_{\epsilon}$ is the unique solution of \eqref{phie}, with $f$ replaced by $\widehat{f}$. Moreover, by Proposition \ref{peps}(\ref{e1}-\ref{e2}), the $W^{1,2}_{C}(\X,\nu)$-norm of $\varphi_{\epsilon}$ is bounded by a constant independent of $\epsilon$. Therefore there exists a sub-sequence $(\varphi_{\epsilon_k})$ weakly convergent in $W^{1,2}_{C}(\X,\nu)$ to a function $\phi$. We have to prove that $\phi=\widehat{\varphi}$, namely $\phi_{|\g}=\varphi$ and   $\phi_{|\g^c}=0$. By Proposition \ref{binbin}\eqref{p2}, we know that 
\[
\lim_{k\rightarrow+\infty}\Vert \varphi-\varphi_{\epsilon_k|\g}\Vert_{L^2(\g,\nu)}=0,
\]
so that $\phi_{|\g}=\varphi$. Since $\varphi_{\epsilon_k}$ weakly converges to $\phi$ in $W^{1,2}_{C}(\X,\nu)$, then it weakly converges to $\phi$ in $L^2(\g,\nu)$.  Recalling that $V_{\epsilon_k}\equiv 1$ in $\g^c$ (see \eqref{Vep}) and using \eqref{e3}, we obtain
\begin{align*}
\norm{\phi}^2_{L^2(\g^c,\nu}=\int_{\g^c}\phi^2 d\nu=\limsup_{k\rightarrow +\infty}\int_{\g^c}\varphi_{\epsilon_k} \phi \;d\nu &\leq\limsup_{k\rightarrow+\infty}\left(\int_{\g^c}\varphi^2_{\epsilon_k}V_{\epsilon_k}d\nu\right)^{\frac{1}{2}} \left(\int_{\g^c}\phi^2 V_{\epsilon_k}d\nu\right)^{\frac{1}{2}}\\
&\leq\lim_{k\rightarrow+\infty}\left(\frac{\epsilon_k}{\lambda}\right)^{\frac{1}{2}}\Vert f\Vert_{L^2(H,\nu)} \left(\int_{\g^c}\phi^2 V_{\epsilon_k}d\nu\right)^{\frac{1}{2}}=0,
\end{align*}
and so $\phi_{|\g^c}=0$. Therefore, $\phi=\widehat{\varphi}\in W_{C}^{1,2}(\X,\nu)$, so that $\varphi\in \mathring{W}^{1,2}_{C}(\g,\nu)$.

Finally we prove that $\varphi$ is a solution of \eqref{pw}. Fixed $v\in \mathring{W}^{1,2}_{C}(\g,\nu)$ and $k\in\N$, we multiply both members of \eqref{gie} by $\widehat{v}$ and we integrate over $\X\backslash (\g\backslash\g_{\epsilon_k})$. Since $ V_{\epsilon_k}\widehat{v}\equiv 0$ on $\X\backslash (\g\backslash\g_{\epsilon_k})$, we have
\[
\lambda\int_{\X\backslash (\g\backslash\g_{\epsilon_{k}})}\varphi_{\epsilon_k} \widehat{v} \;d\nu+\frac{1}{2}\int_{\X\backslash (\g\backslash\g_{\epsilon_{k}})}\scal{C^{1/2}\D\varphi_{\epsilon_k}}{C^{1/2}\D\widehat{v}}\;d\nu=\int_{\X\backslash (\g\backslash\g_{\epsilon_{k}})}\widehat{f}\widehat{v}\;d\nu.
\]
Recalling the definition of $\g_{\epsilon_k}$ (see \eqref{Oep}), letting $k\rightarrow+\infty$, we obtain
\[
\lambda\int_{\X}\widehat{\varphi} \widehat{v}\;d\nu+\frac{1}{2}\int_{\X}\scal{C^{1/2}\D\widehat{\varphi}}{C^{1/2}\D\widehat{v}}\;d\nu=\int_{\mathcal{\X}}\widehat{f}\widehat{v}\;d\nu,
\]
and so we conclude that $\varphi$ satisfies \eqref{pw}. We recall that, by the Lax-Milgram theorem, the weak solution of \eqref{pw} is unique and so, for any $\lambda>0$ and $f\in L^2(\g,\nu)$, we have
\[
R(\lambda, M_2)f=R(\lambda, N_2^\g)f,
\]
and so the Theorem \ref{identifDiri} is proved.
\end{proof}

\section{Examples}
In this Section we present some examples of $A$, $C$ and $F$ that verify the hypotheses of Theorems \ref{identif} and \ref{identifDiri}.
\subsection{An example for Section \ref{gradsist}}\label{gra}
For this example we could consider a very general framework choosing as $X$, $E$, $A$ and $C$ as those defined in \cite[Chapter 6]{CER1}. However, in order not to overburden the calculations, we will consider a less general setting.

Let $\X=L^2([0,1],\lambda)$ where $\lambda$ is the Lebesgue measure and let $E=C([0,1])$. Let $A$ be the realization in $L^2([0,1])$ of the second order derivative with Dirichlet boundary condition and $C=\Id_\X$. By \cite[Section 6.1]{CER1} Hypotheses \ref{EU2}\eqref{EU2.3}, \ref{EU2}\eqref{EU2.5} and $A$ is dissipative in $C([0,1])$. Moreover the constant $w$ of Hypotheses \ref{Sobmu} is equal to $-\pi^2$ (see \cite[Chapter 4]{DA1}). By \cite[Lemma 8.2.1]{CER1} condition \eqref{condinv} of Hypotheses \ref{EU3} is verified.

Now we define the function $F$. Let $\varphi\in C^2(\R)$ be a function such that $\varphi'$ is increasing, and there exist $d_1,d_2>0$ and an  $m\in\N$ such that
\begin{align}
&\abs{\varphi'(y)}\leq d_1(1+\abs{y}^{m}),\quad y\in\R;\label{cre1}\\
&\abs{\varphi''(y)}\leq d_2(1+\abs{y}^{m-1}),\quad y\in\R;\label{cre2}.
\end{align}
Let $\zeta_2>0$. We consider the function $\phi:\R\ra\R$ defined by
\[
\phi(y)=\varphi(y)+\frac{\zeta_2}{2}y^2,
\]
and the function $U:\X\ra \R$ defined by
\[
U(f)=\begin{cases}
\int^1_0\phi(f(x)), & f\in E,\\
0, & f\not\in E.
\end{cases}
\]
In this case the operator of Hypotheses \ref{Sobmu} is $Q_\infty=A^{-1}$. Let $\mu\sim N(0,Q_\infty)$. By \cite[Proposition 5.2]{DA-LU2}, $U\in W_C^{1,p}(\X,\mu)$, for any $p\geq 1$, and 
\[
\D U(f)(x)=\phi'\circ f(x)=\varphi(f(x))+\zeta_2f(x),\quad \forall f\in E=C([0,1]),\; x\in [0,1].
\]
We set $F=-\D U$, and we recall that we have taken $C=\Id_\X$. Hence  Hypotheses \ref{EU2}\eqref{EU2.1} are verified. By \eqref{cre1} and \eqref{cre2} Hypotheses \ref{EU2}\eqref{EU2.7} and \ref{EU2}\eqref{EU2.6} are verified. By \cite[Example D.7]{DA-ZA4} and standard calculations Hypotheses \ref{EU2}\eqref{EU2.4} are verified. We stress that all the hypotheses of Theorem \ref{invX} are verified, so $\nu(C([0,1]))=1$, where $\nu$ is the invariant measure of the transition semigroup associated to the generalize mild solution of \eqref{eqFO}. Finally, by the definition of $\phi$ and the Fernique theorem, the hypotheses of Theorem \ref{identifDiri} are verified.
It is also possible to consider an operator $A$ that verifies Hypotheses \ref{EU2}\eqref{EU2.4}(a) with $\zeta_1<0$(see \cite[Example 11.36]{DA-ZA4}), in this way we can take $\zeta_2<0$.
\subsection{An example where $F$ is not a Nemytskii type operator}\label{noN}

In this subsection we consider a class of functions $F$ already presented in \cite[Section 5.2]{BF2}. We recall the notion of infinite dimensional polynomial (see \cite{CHAE1,DIN1,MUJ1}).

For every $n\in\N$, we say that a map $V:\X^n\ra\X$ is $n$-multilinear if it is linear in each variable separately. A $n$-multilinear map $B$ is said to be symmetric if 
\begin{equation}\label{simme}
V(x_1,\ldots,x_n)=V(x_{\sigma(1)},\ldots,x_{\sigma(n)}),
\end{equation}
for any permutation $\sigma$ of the set $\{1,\ldots,n\}$. We say that a function $P_n:\X\ra\X$ is a homogeneous polynomial of degree $n\in\N$ if there exists a $n$-multilinear symmetric map $B$ such that for every $x\in\X$
\begin{align}\label{Komi}
P(x)=V(x,\ldots,x).
\end{align}
We consider the function $F:\X\ra\X$ defined by
\begin{align*}
F(x):=P_n(x)+\zeta_2 x,
\end{align*}
where $x\in\X$, $\zeta_2\in\R$ and $P_n$ is a homogeneous polynomial of degree $n$ such that,
\begin{align}\label{abyss}
\gen{V(h,x,\ldots,x),h}\leq 0,
\end{align}
where $V$ is the $n$-multilinear map defined by \eqref{Komi}. By \cite[Theorem 3.4]{CHAE1}, there exists $d>0$ such that
\begin{equation}\label{crepol}
\norm{F(x)}\leq d(1+\norm{x}^n),\quad x\in\X.
\end{equation}
Moreover, for any $x,h\in\X$, we have
\begin{equation*}
\J P_n(x)h=nV(h,x,\ldots,x),
\end{equation*} 
and so, by \eqref{abyss}, for any $x,y\in\X$, we obtain
\begin{equation}\label{dispol}
\scal{F(x)-F(y)}{x-y}\leq \zeta_2\norm{x-y}^2.
\end{equation}
Let now consider a particular case. Let $E=\X=L^2([0,1],\lambda)$, let $A$ be the realization in $\X$ of the second order elliptic operator defined in \cite[Section 6.1]{CER1} and let $C\in\mathcal{L}(\X)$ be the positive operator defined in \cite[Section 6.1]{CER1}. Let $K\in L^2([0,1]^4)$ and assume that $K$ is symmetric (\eqref{simme}). Let
\begin{align}\label{polpol}
[P_3(f)](\xi):=\int_0^1\int_0^1\int_0^1 K(\xi_1,\xi_2,\xi_3,\xi) f(\xi_1)f(\xi_2)f(\xi_3)d\xi_1 d\xi_2 d\xi_3
\end{align}
for $f\in L^2([0,1])$. $P$ is a homogeneous polynomial of degree three on $L^2([0,1])$ (see \cite[Exercise 1.73]{DIN1}). \eqref{abyss} holds whenever $K$ has negative value (see \cite[Section 5.2]{BF2}). So, by the same arguments used in the previous example, the hypotheses of Theorem \ref{identif} are verified. It is also possible to consider a general infinite dimensional polynomial of odd degree $n\in\N$. We remark that other choices of $A$ and $C$ are possible, for example we could consider the ones chosen in \cite{DA-LU1}.

\subsection{An application of Theorem \ref{invX}}\label{belapp}
Now we present a particular case of the previous example where $E=W^{1,2}([0,1],\lambda)$. We assume that $A=-\frac{1}{2}\Id_\X$, so it verifies Hypotheses \ref{EU2}\eqref{EU2.3} and $A+\frac{1}{2}\Id$ is dissipative in both $L^2([0,1],\lambda)$ and $W^{1,2}([0,1],\lambda)$. Let $B$ be the realization of the second order derivative in $\X$ with Dirichlet boundary conditions. We recall that $B$ is a negative operator, $\Dom((-B)^{\frac{1}{2}})=W_0^{1,2}([0,1],\lambda)$ and $(-B)^{-\gamma}$ is a trace class operator, for any $\gamma>\frac{1}{2}$ (see \cite[Section 4.1]{DA1}). Let $\beta>2$ and set $C=(-B)^{-\beta}$. Then
\begin{align*}
\norm{W_A(t)}_{W^{1,2}([0,1],\lambda)}^2&=\norm{(-B)^{1/2}\int^t_0e^{-s}B^{-\beta/2}dW(s)}_{L^2([0,1],\lambda)}\\
&=\norm{\int^t_0e^{-s}(-B)^{(1-\beta)/2}dW(s)}_{L^2([0,1],\lambda)},
\end{align*}
and so by \cite[Theorems 4.36 and 5.11]{DA-ZA4}, Hypotheses \ref{EU2}\eqref{EU2.5} and condition \eqref{condinv} of Hypotheses \ref{EU3} are verified.
 
Let $F$ be as in Subsection \ref{noN}. In addition we assume that $K$ has weak derivative with respect to the fourth variable, such that
\[
\dfrac{\partial K }{\partial \xi}\in L^2([0,1]^4,\lambda).
\]
Let $f\in W^{1,2}([0,1],\lambda)$. We have $F(f)=P_3(f)+\zeta_2f\in W^{1,2}([0,1],\lambda)$(see \eqref{polpol}) and its weak derivative is
\begin{equation}\label{dedeb}
(F(f))'=\int_0^1\int_0^1\int_0^1 \dfrac{\partial K }{\partial \xi}(\xi_1,\xi_2,\xi_3,\xi) f(\xi_1)f(\xi_2)f(\xi_3)d\xi_1 d\xi_2 d\xi_3+\zeta f'.
\end{equation}
If we assume that $(\partial K/\partial \xi)\in L^2([0,1]^4,\lambda)$ is symmetric (see \eqref{simme}) and that it has negative value, then by \eqref{dedeb} and the same arguments used in Subsection \ref{noN}, \eqref{crepol} and \eqref{dispol} are verified in $W^{1,2}([0,1],\lambda)$. Hence, by the same arguments of the previous examples, the hypotheses of Theorem \ref{invX} are verified and so $\nu(W^{1,2}([0,1],\lambda))=1$.

\vspace{0.6cm}

\noindent {\bf Acknowledgements.} The author would like to thank A. Lunardi and S. Ferrari for many useful discussions and comments.

\end{document}